\documentclass[11pt]{amsart}
\usepackage{fullpage,amssymb}
\usepackage{mathrsfs}
\usepackage{color}
\usepackage{algorithmicx}
\usepackage{algpseudocode}
\usepackage{tikz-cd}

\usepackage{color}   
\usepackage{hyperref}
\hypersetup{
    linktoc=all,     
    linkcolor=black,  
}

\setcounter{tocdepth}{1}

\newcommand{\Vnote}[1]{\begin{quote}{\color{red}{{\sf Visu's Note:} {\sl{#1}}}} \end{quote}}

\newtheorem{theorem}{Theorem}[section]
\newtheorem{corollary}[theorem]{Corollary} 
\newtheorem{lemma}[theorem]{Lemma}
\newtheorem{proposition}[theorem]{Proposition}

\theoremstyle{definition}
\newtheorem{definition}[theorem]{Definition}
\newtheorem{remark}[theorem]{Remark}
\newtheorem{example}[theorem]{Example}

\newcommand{\R}{{\mathbb R}}
\newcommand{\Rep}{{\rm Rep}}
\newcommand{\SI}{{\rm SI}}

\newcommand{\ST}{{\rm ST}}
\newcommand{\PD} {{\rm PD}}
\newcommand{\mlt}{{\rm mlt}}
\newcommand{\Lie} {{\rm Lie}}

\newcommand{\C}{{\mathbb C}}
\newcommand{\N}{{\mathbb N}}

\newcommand{\Z}{{\mathbb Z}}
\newcommand{\tnr}{\operatorname{Tr}}

\newcommand{\Mat}{\operatorname{Mat}}
\newcommand{\GL}{\operatorname{GL}}
\newcommand{\SL}{\operatorname{SL}}
\newcommand{\Mod}{\operatorname{mod}}

\newcommand{\Sym}{S}

\title{Maximum Likelihood Estimation for matrix normal models via quiver representations}
\author{Harm Derksen and Visu Makam}
\thanks{The first author was partially supported by NSF grants IIS-1837985 and DMS-2001460. The second author was partially supported by NSF grants DMS-1638352 and CCF-1900460.}

\begin{document}
\maketitle

\begin{abstract}
In this paper, we study the log-likelihood function and Maximum Likelihood Estimate (MLE) for the matrix normal model for both real and complex models.  We describe the exact number of samples needed to achieve (almost surely) three conditions, namely a bounded log-likelihood function, existence of MLEs, and uniqueness of MLEs.  As a consequence, we observe that almost sure boundedness of log-likelihood function guarantees almost sure existence of  an MLE, thereby proving a conjecture of Drton, Kuriki and Hoff \cite{Drton-etal}. The main tools we use are from the theory of quiver representations, in particular, results of Kac, King and Schofield on canonical decomposition and stability. 
\end{abstract}

\tableofcontents

\section{Introduction}
The following problem is fundamental in statistics in a variety of settings: Among a collection of probability distributions (a.k.a. a statistical model), find the one that best fits some empirical data. A probability distribution in the collection that maximizes the likelihood of the empirical data is called a Maximum Likelihood Estimate (MLE). Understanding the existence and uniqueness of  MLEs is an important problem that is widely studied. A related problem is to understand when the likelihood function (or equivalently the log-likelihood function) is bounded. 

In many settings, data is observed in two domains, and hence observations are naturally matrix-valued. For such observations, one sometimes assumes that they follow a {\em matrix normal distribution}. These matrix normal models have been used for various purposes in various settings, for example to EEG/MEG data \cite{Ros4, Ros6, Ros10, Ros21}, Environmental data \cite{Ros8,Ros18}, Netflix movie rating data \cite{Ros1} and facial recognition \cite{Ros25} to name a few. The existence and uniqueness of MLEs, and the boundedness of the likelihood function in matrix normal models for small sample sizes is important to understand so as to use them effectively in applications (see \cite{Drton-etal} for more motivation). This problem was studied in many manuscripts before, e.g., \cite{Dut99, Ros, Srivastava, Drton-etal}, each making partial progress.\footnote{We caution the reader that some of these papers contain erroneous results, and we refer the reader to \cite{AKRS, Drton-etal} for the state of art results prior to our work.} 

More recently,  Amendola, Kohn, Reichenbach and Seigal \cite{AKRS} uncovered connections between MLEs and stability notions in invariant theory and adapted results of B\"urgin and Draisma \cite{BD06} to further improve the results on sample size required for the (almost sure) boundedness of the log-likelihood function. It is worthwhile to mention some interesting connections to invariant theory even though it will not be relevant to our paper. The flip-flop algorithm for computing an MLE for matrix normal models \cite{Dut99, LZ05} is very similar to the algorithm proposed by Gurvits \cite{Gurvits} for computing capacity of completely positive operators, which is well known to be equivalent to null cone membership (a central problem in algorithmic invariant theory) for the so called left-right action. Even more curiously, the notion of geodesic convexity that has played a major role in understanding invariant theoretic algorithms in recent years \cite{BFGOWW} can already be seen in Wiesel's work \cite{Wiesel} several years prior in the setting of the flip-flop algorithm.

\subsubsection*{\bf Summary of our main results} In this paper, for matrix normal models (both real and complex), we will compute the {\em exact} number of samples needed to achieve (almost surely\footnote{This just means that the set of all empirical data for which the condition is not satisfied has lebesgue measure zero.}) three conditions, i.e., (1) a bounded log-likelihood function, (2) existence of MLEs and (3) uniqueness of MLEs, thereby completely resolving the problems. In particular, we prove a conjecture of Drton, Kuriki and Hoff \cite{Drton-etal} that almost sure boundedness of the likelihood function implies almost sure existence of an MLE. We utilize heavily the connections between invariant theory and MLEs and in particular the connection between matrix normal models and geometric invariant theory for quiver representations discovered in \cite{AKRS}. Our techniques, however, are significantly different from any of the previous work on these problems and rely on the algebraic aspects of theory of quiver representations. We also study a related model called the model of proportional covariance matrices and give complete answers to the aforementioned questions in that case as well. \\

Before we get into precise definitions and results, a few remarks on background literature. We refer to \cite{AKRS, Drton-etal} and references therein for more details regarding real and complex Gaussian models, matrix normal models, their MLEs and associated thresholds as well as more motivation for the problems we discuss in this paper. A detailed explanation and proofs of the connections between MLEs and invariant theory can be found in \cite{AKRS}. We point the reader to the book \cite{DW-book} as a comprehensive introductory text on quiver representations. 

\subsection{Real Gaussian models}
We denote by $\PD_n$, the cone of $n \times n$ positive definite matrices with entries in $\R$, the field of real numbers. For an $n$-dimensional Gaussian distribution with mean $0$ and covariance matrix $\Sigma \in \PD_n$, the density function is described by 
$$
f_\Sigma(y) = \frac{1}{\sqrt{\det(2 \pi \Sigma)}} e^{- \frac{1}{2} y^\top \Sigma^{-1} y}
$$
The inverse of the covariance matrix, i.e., $\Sigma^{-1}$ is called the concentration matrix and denoted $\Psi$. A subset of $\mathcal{M} \subseteq \PD_n$ defines a statistical model consisting of the $n$-dimensional Gaussian distributions with mean $0$ and concentration matrix $\Psi \in \mathcal{M}$. For Gaussian models, the data is a tuple of vectors $Y = (Y_1,\dots,Y_m) \in (\R^n)^m$, where $m$ denotes the sample size. The likelihood function $L_Y: \PD^n \rightarrow \R$ is given by 
$$
L_Y(\Psi) = \prod_{i=1}^m f_{\Psi^{-1}}(Y_i) = \det\left(\frac{\Psi}{2 \pi}\right)^{m/2} e^{-\frac{1}{2} \sum_{i=1}^m Y_i^\top \Psi Y_i}.
$$


The log-likelihood function $l_Y: \PD_n \rightarrow \R$ (upto an additive constant) is given by
$$
l_Y(\Psi) = \frac{m}{2}\log \det(\Psi) - \frac{1}{2}\tnr \left(\Psi \sum_{i=1}^m Y_iY_i^\top \right).
$$

A Maximum Likelihood Estimate (MLE) is a point $\widehat{\Psi} \in \mathcal{M}$ that maximizes the likelihood of observing the data $Y$, which is equivalent to maximizing the log-likelihood function $l_Y$. In other words, $\widehat{\Psi}$ is an MLE if $l_Y(\widehat{\Psi}) \geq l_Y(\Psi)$ for all $\Psi \in \mathcal{M}$. If the log-likelihood function is unbounded, then of course MLEs do not exist. But even when the log-likelihood function is bounded, it is not entirely obvious that MLEs exist because the supremum of the log-likelihood function may not be achieved by any particular concentration matrix. Finally, even when an MLE exists, there is no guarantee that it is unique as there may be many points in the model that achieve the maximum possible value of the log-likelihood function. 

For a Gaussian model $\mathcal{M} \subseteq \PD_n$, we define three threshold functions as follows:
\begin{enumerate} 
\item We define $\mlt_b(\mathcal{M})$ to be the smallest integer $m$ such that for $d \geq m$, the log-likelihood function $l_Y$ for $Y = (Y_1,\dots,Y_m) \in (\R^n)^d$ is bounded almost surely.
\item We define $\mlt_e(\mathcal{M})$ to be the smallest integer $m$ such that for $d \geq m$, an MLE exists almost surely for $Y \in (\R^n)^d$.
\item We define $\mlt_u(\mathcal{M})$ to be the smallest integer $m$ such that for $d \geq m$, there almost surely exists a unique MLE for $Y \in (\R^n)^d$.
\end{enumerate}

In the above, almost surely means that the property holds away from a subset of $(\R^n)^d$ of Lebesgue measure zero. We will refer to $\mlt_b,\mlt_e$ and $\mlt_u$ as maximum likelihood threshold functions. By the above discussion, we observe that $\mlt_b \leq \mlt_e \leq \mlt_u$.

\subsection{Complex Gaussian models}
The setting of complex Gaussian models is very much analogous, with minor changes. The density function for a complex $n$-dimensional Gaussian with mean $0$ and covariance matrix $\Sigma \in \PD_n$ (the cone of positive definite $n \times n$ complex matrices) is given by
$$
f_\Sigma(y) = \frac{1}{\sqrt{\det(2 \pi \Sigma)}} e^{- \frac{1}{2} y^\dag \Sigma^{-1} y},
$$
where $y^{\dag}$ denotes the adjoint of $y$, i.e., conjugate transpose.

The log-likelihood function is given by
$$
l_Y(\Psi) = \frac{m}{2} \log \det(\Psi) - \frac{1}{2}\tnr \left(\Psi \sum_{i=1}^m Y_iY_i^\dag \right),
$$
where $Y_i^\dag$ denotes the adjoint of $Y_i$. The rest of the discussion follows analogously and we do not feel the need to repeat it.

\begin{remark}
We will reuse the same notation for real and complex models (for e.g., $\PD_n, l_Y$, etc). It will always be clear whether we are in a real or a complex model, so there will be no scope for confusion.
\end{remark}

\subsection{Matrix normal models and main results} \label{sec:intro-mnm}
If $n = pq$, we consider the subset 
$$
\mathcal{M}(p,q) = \{\Psi_1 \otimes \Psi_2\ |\ \Psi_1 \in \PD_p, \Psi_2 \in \PD_q\} \subseteq \PD_{pq},
$$ 
where $\otimes$ denotes the Kronecker (or tensor) product of matrices. Such a statistical model is called a matrix normal model. Sometimes it is also called a Kronecker covariance model. We will consider and deal with both real and complex matrix normal models. As mentioned above, $\PD_n$ denotes positive definite real or complex matrices depending on whether we are working with real or complex matrix normal models. When we want to differentiate between the real and complex models, we will use $\mathcal{M}_\R(p,q)$ and $\mathcal{M}_\C(p,q)$ respectively. For matrix normal models, it will convenient to interpret the data as a $p \times q$ matrix, rather than a vector of size $pq$, and we will do so.

Drton, Kuriki, and Hoff \cite{Drton-etal} suggest that an exact formula for maximum likelihood thresholds may be complicated because of the following behavior. For a sample size of two (i.e., $Y = (Y_1,Y_2) \in \Mat_{p,q}^2$, where $\Mat_{p,q}$ denotes the space of $p \times q$ matrices), consider the matrix normal model $\mathcal{M}(p,q)$.

\begin{itemize}
\item If $(p,q) = (5,4)$, then we almost surely have a unique MLE;
\item If $(p,q) = (6,4)$, then we almost surely have an MLE that is not unique;
\item If $(p,q) = (7,4)$, then MLEs do not exist;
\item If $(p,q) = (8,4)$, then we almost surely have an MLE that is not unique.
\end{itemize}


We also obtain {\em exact} formulas for the $\mlt_b,\mlt_e$ and $\mlt_u$ for matrix normal models $\mathcal{M}(p,q)$, both real and complex. There are some delicate differences between real matrix normal models and complex matrix normal models which we will elaborate on later, see also \cite[Example~5.6]{AKRS}. Nevertheless, the maximum likelihood threshold functions are the same for both real and complex matrix normal models.

From the point of view of quiver representations, it is natural to fix the number of samples, and then study the boundedness of log-likelihood function, existence and uniqueness of MLEs as $p$ and $q$ vary. This subtle change in point of view offers a significantly different perspective from earlier work. The added advantage is that the answer comes out very crisp!

\begin{theorem} \label{theo-main-mnm}
Suppose $K = \R$ or $\C$. Let $Y = (Y_1,\dots,Y_m) \in \Mat_{p,q}^m(K)$. Let $d = {\rm gcd}(p,q)$. Then, for the matrix normal model $\mathcal{M}_K(p,q)$:
\begin{enumerate}
\item If $p^2 + q^2 - mpq < 0$, then there almost surely exists a unique MLE;
\item If $p^2 + q^2 - mpq = 0$ or $d^2$, then an MLE exists almost surely. Further this MLE is almost surely unique  if and only if $d = 1$;
\item In all other cases, log likelihood function is unbounded always (not just almost surely). Consequently MLEs do not exist. 
\end{enumerate}
\end{theorem}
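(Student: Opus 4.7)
The plan is to translate the three MLE conditions into GIT conditions for the $\SL_p\times\SL_q$-action on $\Mat_{p,q}^m$ and then read off the answer from the canonical decomposition (in the sense of Kac--Schofield) of the dimension vector $(p,q)$ for the $m$-Kronecker quiver $K_m$ with two vertices and $m$ arrows from the first to the second. The Tits form of $K_m$ at $(p,q)$ is exactly $\chi(p,q)=p^2+q^2-mpq$, so the combinatorial dichotomies of the theorem will mirror dichotomies in the canonical decomposition. Via the AKRS dictionary \cite{AKRS}, one has that $l_Y$ is bounded iff $Y$ is semistable, an MLE exists iff the orbit of $Y$ is closed (polystable), and the MLE is unique iff $Y$ is stable (closed orbit with finite stabilizer modulo the central kernel). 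This translation and all the ensuing quiver arguments are uniform across the real and complex settings.

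Write $d=\gcd(p,q)$ and $(p,q)=d(a,b)$ with $\gcd(a,b)=1$, so that $\chi(a,b)=\chi(p,q)/d^2$. I would then split into cases based on $\chi(p,q)$. If $\chi(p,q)<0$, then $(p,q)$ lies in the imaginary cone of $K_m$ and is an imaginary Schur root: the canonical decomposition is trivial, the generic representation is indecomposable with scalar endomorphism algebra, hence stable, so a unique MLE exists almost surely. If $\chi(p,q)=0$, then $a^2+b^2=mab$ with $\gcd(a,b)=1$ forces $m=2$ and $(a,b)=(1,1)$, so $(p,q)=(d,d)$, and the canonical decomposition is $d$ copies of the isotropic Schur root $(1,1)$ of $K_2$. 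If $\chi(p,q)=d^2$, then $\chi(a,b)=1$, so $(a,b)$ is a real Schur root and the canonical decomposition of $(p,q)=d(a,b)$ is $d$ copies of $(a,b)$. In both of these sub-cases the generic representation is polystable as a direct sum of $d$ isomorphic stable indecomposables, so an MLE exists almost surely; uniqueness fails for $d\geq 2$ because the residual $\GL_d$-action on the multiplicity space of the repeated summand produces a positive-dimensional family of polystable orbits, i.e.\ of MLEs.

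The remaining case is $\chi(p,q)>0$ with $\chi(p,q)\neq d^2$, equivalently $\chi(a,b)\geq 2$. Here I claim the stronger statement that the semistable locus is \emph{empty}, so $l_Y$ is unbounded for \emph{every} $Y$. Any summand of a canonical decomposition of $(p,q)$ that has slope $q/p$ must be proportional to $(a,b)$, hence of the form $k(a,b)$ for some $k\geq 1$; but $\chi(k(a,b))=k^2\chi(a,b)\geq 2$, and every root of $K_m$ has $\chi\leq 1$, so $k(a,b)$ is not a root, let alone a Schur root. Hence no summand of the canonical decomposition of $(p,q)$ has slope $q/p$. By Schofield's theorem on the existence of quiver semi-invariants (equivalently the Derksen--Weyman saturation theorem, see \cite{DW-book}), the ring of $\SL_p\times\SL_q$-invariants on $\Mat_{p,q}^m$ therefore consists only of constants, and the null cone is all of $\Mat_{p,q}^m$.

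The chief technical obstacle is precisely this last step --- promoting ``generically unstable'' to ``universally unstable''. It rests on Schofield's characterization: semi-invariants of a given $\theta$-weight exist if and only if $(p,q)$ decomposes canonically into Schur roots all of $\theta$-slope zero. Once no semi-invariant of the relevant weight exists, the null cone must be the entire representation space. The remaining pieces --- identifying the canonical decomposition from the value of $\chi(p,q)$ by the elementary number-theoretic analysis above, and reading off (semi/poly)stability and uniqueness of MLE --- are essentially a dictionary lookup once the Kac--Schofield framework is in place.
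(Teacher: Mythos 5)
Over $\C$ your plan is essentially the paper's: identify $\SL_p\times\SL_q$-stability with $\sigma$-stability for the unique effective weight, read off generic (semi/poly)stability from the canonical decomposition of $(p,q)$ governed by the Tits form, and apply the AKRS dictionary. Your route to the ``unbounded for \emph{every} $Y$'' claim in case (3) --- no summand of the canonical decomposition can be proportional to $(p,q)$, hence the semi-invariant ring is trivial and the null cone is everything --- is a legitimate alternative to the paper's argument via Corollary~\ref{lin.ind.unstable}, and it is fine (modulo citing the precise statement that nonvanishing of $\SI(Q,\alpha)_{n\sigma}$ for some $n>0$ is equivalent to generic $\sigma$-semistability).

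The genuine gap is the real case, which you dismiss with the sentence that the dictionary and the quiver arguments are ``uniform across the real and complex settings.'' They are not. First, even for the statements that do transfer (boundedness, existence, and uniqueness \emph{when} stable), one must prove that a real sample is $\SL_p(\R)\times\SL_q(\R)$-semistable/polystable/stable iff it is so for the complex group, and that generic complex statements descend to generic real statements; the paper does this via Birkes and Borel--Harish-Chandra (Propositions~\ref{prop:transfer} and \ref{gen.stable.transfer}), and some argument of this kind is needed. Second, and more seriously, your uniqueness criterion ``MLE unique iff $Y$ stable'' is false over $\R$: Theorem~\ref{theo:AKRS-LR} gives stable $\Rightarrow$ unique in general, but the converse only for $K=\C$. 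So in case (2) with $d\ge 2$, the failure of generic stability (your ``residual $\GL_d$-action'' remark, which in any case conflates a family of polystable orbits with the set of MLEs of a \emph{fixed} $Y$) proves almost-sure non-uniqueness only over $\C$; over $\R$ it proves nothing. The paper closes this with a separate argument: a unique MLE forces $Y$ to be indecomposable as a \emph{real} representation (Lemma~\ref{uni-MLE-indec}, built from the explicit AKRS description of MLEs via stabilizer elements), and a generic real $Y$ is decomposable over $\R$ --- by the Krull--Remak--Schmidt argument of Lemma~\ref{lem.3indec} when $d\ge 3$, by a descent argument using the dense-orbit real form of the unique indecomposable when $d=2$ and $(p,q)/d$ is a real Schur root, while the isotropic case $(p,q)=(2,2)$, $m=2$ is genuinely delicate (the set of samples with a unique MLE is full-dimensional but not dense) and is handled by citing Drton--Kuriki--Hoff. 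Without some replacement for these steps your proof establishes Theorem~\ref{theo-main-mnm} only for $K=\C$.
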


At this juncture, we invite the reader to verify that the complicated behavior in the examples mentioned above is consistent with the statement of the above theorem. We reformulate Theorem~\ref{theo-main-mnm} to compute exactly $\mlt_b,\mlt_e,$ and $\mlt_u$ for $\mathcal{M}(p,q)$. While not as elegant as the formulation in the above theorem, it remains fairly simple.  

\begin{theorem} \label{theo-thresh-mnm}
Consider the (real or complex) matrix normal model $\mathcal{M}(p,q)$. Let ${\rm gcd}(p,q) = d$, and let $r = \displaystyle \frac{p^2 + q^2 - d^2}{pq}$. Then 
\begin{enumerate}
\item If $p = q = 1$, then $\mlt_b = \mlt_e = \mlt_u = 1$
\item If $p = q >1$, then $\mlt_b = \mlt_e = 1$ and $\mlt_u = 3$.
\item If $p \neq q$ and $r$ is an integer, then $\mlt_b = \mlt_e =  r$. If $d = 1$, then $\mlt_u = r$, and if $d > 1$, then $\mlt_u  = r + 1$.
\item If $p \neq q$ and $r$ is not an integer, then $\mlt_b = \mlt_e = \mlt_u = \lceil \frac{p^2 + q^2}{pq} \rceil$.
\end{enumerate}
\end{theorem}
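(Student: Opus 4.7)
The plan is to derive Theorem~\ref{theo-thresh-mnm} directly from Theorem~\ref{theo-main-mnm} by a case analysis on $m$. Define $f(m) := p^2 + q^2 - mpq$; this is strictly decreasing in $m$ with step $-pq$. In the language of Theorem~\ref{theo-main-mnm}, boundedness of the log-likelihood and existence of an MLE hold exactly when $f(m) \leq 0$ or $f(m) = d^2$, and uniqueness further requires either $f(m) < 0$ or $f(m) \in \{0, d^2\}$ with $d = 1$. Monotonicity of $f$ ensures that once a property holds at some $m_0$ it holds for all $m \geq m_0$, so each threshold is simply the first $m$ entering its good regime.

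The one non-routine ingredient is a coprimality observation I would establish first: if $p \neq q$, the equation $p^2 + q^2 = mpq$ has no integer solution. Writing $p = da$, $q = db$ with $\gcd(a,b) = 1$, the equation reduces to $ab \mid a^2 + b^2$; coprimality forces $a \mid b^2$ and $b \mid a^2$, hence $a = b = 1$. As a corollary, for $p \neq q$ the ratio $(p^2 + q^2)/(pq)$ is never an integer, and $f(m) = d^2$ occurs (if at all) only at the unique integer $m = r$.

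The four cases are then mechanical. For $p = q$ one has $d = p$ and $f(m) = (2-m)p^2$, so $f(1) = d^2$, $f(2) = 0$, and $f(m) < 0$ for $m \geq 3$; this yields $\mlt_b = \mlt_e = 1$ in both subcases, and $\mlt_u = 1$ when $p = 1$ versus $\mlt_u = 3$ when $p > 1$. For $p \neq q$ with $r \in \Z$, one checks $f(r) = d^2$, $f(r+1) = d^2 - pq < 0$ (using $d \leq \min(p,q) < \max(p,q)$, so $d^2 < pq$), and $f(m) > d^2$ for $m < r$; Theorem~\ref{theo-main-mnm} then gives $\mlt_b = \mlt_e = r$ and $\mlt_u \in \{r, r+1\}$ according to whether $d = 1$. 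For $p \neq q$ with $r \notin \Z$, neither $0$ nor $d^2$ is ever attained, so $f$ transitions directly from positive to negative at $m = \lceil (p^2 + q^2)/(pq) \rceil$, pinning down all three thresholds simultaneously.

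Beyond the coprimality lemma, no new mathematics is needed; the main ``obstacle,'' such as it is, amounts to careful bookkeeping of the $\mlt_u$ subcase when $d > 1$, where the MLE exists at $m = r$ but only becomes unique at $m = r + 1$.
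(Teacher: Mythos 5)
Your derivation is correct and matches the paper's route: the paper presents Theorem~\ref{theo-thresh-mnm} as a direct reformulation of Theorem~\ref{theo-main-mnm}, leaving the case analysis implicit, and that translation is exactly what you carry out. Your coprimality observation (that $p^2+q^2=mpq$ forces $p=q$) together with the check that $d^2\leq pq$ makes each good regime upward closed in $m$ is precisely the bookkeeping needed to turn Theorem~\ref{theo-main-mnm} into the threshold formulas.
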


Now, it is a simple observation to see that the Drton-Kuriki-Hoff conjecture \cite{Drton-etal} follows immediately from the above theorems:

\begin{corollary} [Drton-Kuriki-Hoff conjecture \cite{Drton-etal}] \label{Conj-DKH}
For the (real or complex) matrix normal model $\mathcal{M}(p,q)$, almost sure boundedness of (log-)likelihood function implies almost sure existence of MLE. In particular, for all $(p,q)$,  
$$\mlt_b(\mathcal{M}(p,q)) = \mlt_e(\mathcal{M}(p,q)).$$
\end{corollary}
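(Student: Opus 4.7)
The plan is to derive the corollary as a direct consequence of Theorem~\ref{theo-main-mnm}. That theorem partitions every triple $(p,q,m)$ into three mutually exclusive regimes determined entirely by the arithmetic of $p^2+q^2-mpq$ relative to $0$ and $d^2$, where $d=\gcd(p,q)$. The crucial structural observation to exploit is that the theorem's case (3) is stated in the strong form: the log-likelihood is unbounded \emph{always}, not merely on a positive-measure set. This is what makes the implication ``almost sure boundedness $\Rightarrow$ almost sure existence'' free of content beyond the trichotomy itself.

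More concretely, I would argue by contraposition. Suppose for some $(p,q,m)$ an MLE fails to exist almost surely for $\mathcal{M}_K(p,q)$. Then we cannot be in case (1) or case (2) of Theorem~\ref{theo-main-mnm}, since each of those cases guarantees almost sure existence of an MLE. Hence we must be in case (3), where the theorem asserts unboundedness of $l_Y$ for every data point $Y$. In particular the log-likelihood is not almost surely bounded, which is the desired contrapositive.

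For the second assertion, $\mlt_b(\mathcal{M}(p,q)) = \mlt_e(\mathcal{M}(p,q))$, I would simply read off the threshold values from Theorem~\ref{theo-thresh-mnm}. In each of its four listed cases the quantities $\mlt_b$ and $\mlt_e$ are explicitly listed as equal (both equal to $1$ in cases (1) and (2); both equal to $r$ in case (3); both equal to $\lceil (p^2+q^2)/(pq)\rceil$ in case (4)). An equivalent route, which avoids case-checking, is to note that the first part of the corollary says $\mlt_e \le \mlt_b$, while the inequality $\mlt_b \le \mlt_e$ is automatic from the definitions (an unbounded log-likelihood cannot attain a maximum). Combining these gives equality.

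The only place where any real work has been done is inside Theorem~\ref{theo-main-mnm}, whose proof supplies the trichotomy and, most importantly, the \emph{pointwise} unboundedness in case (3). Once that theorem is established, the corollary is essentially a tautology, so there is no separate obstacle for this statement beyond what the theorem itself has already absorbed.
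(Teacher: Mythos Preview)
Your proposal is correct and matches the paper's own treatment: the paper simply states that the corollary follows immediately from Theorems~\ref{theo-main-mnm} and~\ref{theo-thresh-mnm}, and your contrapositive argument using the trichotomy (with the key point that case~(3) gives pointwise unboundedness) is precisely how one unpacks that immediacy.
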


We also consider a variant of the matrix normal model where one of the matrices is diagonal which is called the model of proportional covariance matrices (see e.g., \cite{Eriksen}). Let 
$$
\mathcal{N}(p,q) = \{\Psi \otimes D\ |\ \Psi \in \PD_p, D \in \PD_q \text{ is a diagonal matrix}\} \subseteq \PD_{pq}.
$$
This model has also been considered before \cite{Ros} in the context of maximum likelihood threshold functions. We have the following results:

\begin{theorem} \label{theo:diag.model}
Consider the model $\mathcal{N}(p,q)$. Let $r = p/q$.
\begin{enumerate}
\item If $r$ is an integer, then $\mlt_b = \mlt_e = r$. If $q = 1$, then $\mlt_u = r$ and if $q >1$, then $\mlt_u = r  + 1$.
\item If $r$ is not an integer, then $\mlt_b = \mlt_e = \mlt_u = \lceil r \rceil.$
\end{enumerate}
\end{theorem}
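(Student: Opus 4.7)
The plan is to prove Theorem~\ref{theo:diag.model} by reducing it to a stability question for a suitable quiver, paralleling the approach to Theorem~\ref{theo-main-mnm}. Diagonalising the second factor $T_q \subset \GL_q$ splits the ``column'' vertex of the $m$-Kronecker quiver into $q$ one-dimensional vertices, so the model $\mathcal{N}(p,q)$ corresponds to the ``multi-star'' quiver $Q$ with one central vertex $v_0$ of dimension $p$, leaves $v_1,\dots,v_q$ of dimension $1$, and $m$ arrows from each $v_j$ to $v_0$. A data tuple $Y \in \Mat_{p,q}^m(K)$ becomes a representation of $Q$ of dimension vector $\alpha = (p,1,\dots,1)$ by identifying the $m$ arrows from $v_j$ with the $j$-th columns of $Y_1,\dots,Y_m$ viewed as vectors in $K^p$. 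The group $\GL(\alpha) = \GL_p \times (K^*)^q$ acts naturally, and the Kempf--Ness dictionary of \cite{AKRS} (applied in the form developed in the preceding sections for the matrix normal case) translates almost sure boundedness of the log-likelihood, existence of an MLE, and uniqueness of an MLE into almost sure semistability, polystability, and stability of a representation of $Q$ with respect to a weight $\sigma$ read off from the transformation law of the log-likelihood. A direct computation yields $\sigma = (-q,p,p,\dots,p)$, which satisfies $\sigma \cdot \alpha = 0$.

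The next step is to apply King's criterion. A sub-representation of a generic $V \in \Rep(Q,\alpha)$ is specified by a subspace $W_0 \subseteq K^p$ of dimension $k$ together with a subset $S \subseteq \{1,\dots,q\}$ of kept leaves, subject to the constraint that for every $j \in S$ the $m$ vectors at leaf $j$ lie in $W_0$; almost surely the minimum admissible $k$ is $\min(|S|m, p)$. King's inequality $\sigma(W) \leq 0$ for semistability becomes $\min(sm,p) \geq sp/q$ for every $s \in \{1,\dots,q\}$, which in the binding regime $sm < p$ reduces to $m \geq r := p/q$. A short case analysis on whether $r$ is an integer then yields $\mlt_b = r$ when $r \in \Z$ and $\mlt_b = \lceil r \rceil$ otherwise. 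Stability requires strict inequality on every proper non-trivial sub-representation, which fails precisely at the equality case $sm = sp/q$ with $1 \leq s \leq q-1$; this can occur only if $r$ is an integer and $q > 1$, yielding $\mlt_u = r+1$, whereas for $q=1$ or non-integer $r$ no such $s$ exists and $\mlt_u$ coincides with $\mlt_b$.

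For $\mlt_e$ it suffices to upgrade generic semistability at the boundary to generic polystability when $m=r$, $r \in \Z$, $q > 1$. There the $qr = p$ generic column vectors form a basis of $K^p$, so writing $V_0^{(j)}$ for the span of the $r$ vectors from leaf $j$ gives $K^p = V_0^{(1)} \oplus \cdots \oplus V_0^{(q)}$ and a decomposition $V = V^{(1)} \oplus \cdots \oplus V^{(q)}$, where $V^{(j)}$ is supported on $v_0$ (with dimension $r$) and on leaf $j$ (with dimension $1$). Each $V^{(j)}$ has $\sigma$-slope $0$, and its only proper non-trivial sub-representations are of the form $(k,0)$ with $\sigma = -qk < 0$; hence $V^{(j)}$ is $\sigma$-stable, so $V$ is polystable and Kempf--Ness provides an MLE, giving $\mlt_e = r = \mlt_b$. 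The principal obstacle is the first step: verifying that the Kempf--Ness dictionary transfers to the quiver $Q$ and pinning down the correct weight $\sigma$ by tracking how the two $\log\det$ contributions in the log-likelihood (one coming from $\Psi_1$, one from $D$) determine the weight at $v_0$ and at each leaf. Once this identification is justified, the remainder reduces to the clean combinatorial computation sketched above.
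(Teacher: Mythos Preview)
Your proposal is correct and follows essentially the same route as the paper: the paper introduces the star quiver $\mathcal{B}(q,m)$ with dimension vector $(p,1,\dots,1)$ and weight $\sigma=(-q',p',\dots,p')$, identifies $\sigma$-stability with $\SL_p\times\ST_q$-stability (the analogue of Lemma~\ref{Lemma-LR-sigma}), and then carries out exactly your subrepresentation/King-criterion computation in Proposition~\ref{stab.star}, including the explicit direct-sum decomposition $V=\bigoplus_j V^{(j)}$ at the boundary $mq=p$ to get polystability. One small caution: over $\R$ stability only implies uniqueness, so your claim $\mlt_u=r+1$ for $q>1$ still needs the extra step (non-uniqueness at $m=r$) that the paper handles by noting the summands in the canonical decomposition are real Schur roots and invoking the argument from Lemma~\ref{uni-MLE-indec}; your phrase ``in the form developed in the preceding sections'' should be understood to include this.
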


Once again, we see that $\mlt_b(\mathcal{N}(p,q)) = \mlt_e(\mathcal{N}(p,q))$.

\subsection{Organization}
In Section~\ref{sec:inv.thry}. we recall invariant theory and the connections to MLE for Gaussian group models. We also study stability notions when the underlying field is $\R$ or $\C$ and discuss an important result, i.e., Proposition~\ref{gen.stable.transfer} that allow us to transfer generic stability results from $\C$ (where it is easier to prove things) to $\R$ (which is more important for statistics). We discuss quiver representations, stability for quiver representations and canonical decompositions in Sections~\ref{sec:quivers},~\ref{sec:stability} and ~\ref{sec:can.dec} respectively. In Section~\ref{sec:mnm} and Section~\ref{sec:diagonal}, we bring together all the material we develop to prove our main results on maximum likelihood thresholds.

\subsection{Acknowledgements}
We would like to thank Carlos Am\'endola, Mathias Drton, Kathl\'en Kohn, Philipp Reichenbach, Anna Seigal for interesting discussions and comments on an earlier draft of this paper.  We also thank Ronno Das and Siddharth Krishna for helpful discussions.

\section{Invariant theory} \label{sec:inv.thry}
Invariant theory is the study of symmetries captured by group actions. The roots of this subject can be traced back to the masters of computation in the 19th century. At the turn of the 20th century, the work of Hilbert and Weyl brought invariant theory to the forefront of mathematics, and served to establish the foundations for modern algebra and algebraic geometry. 

The basic setting is as follows. Let $G$ be a group. A representation of $G$ is an action of $G$ on a (finite-dimensional) vector space $V$ (over a field $K$) by linear transformations. This is captured succinctly as a group homomorphism $\rho: G \rightarrow \GL(V)$. In particular, an element $g \in G$ acts on $V$ by the linear transformation $\rho(g)$. We write $g \cdot v$ or $gv$ to mean $\rho(g)v$. Throughout this paper, we will only consider the setting where $G$ is a linear algebraic group (over the underlying field $K$), i.e., $G$ is an (affine) variety, the multiplication and inverse maps are morphism of varieties, and the action is a rational action (or rational representation), i.e., $\rho: G \rightarrow \GL(V)$ is a morphism of algebraic groups.

The $G$-orbit of $v \in V$ is the set of all vectors that you can get from $v$ by applying elements of the group, i.e., 
$$
O_v := \{gv\ |\ g \in G\} \subseteq V.
$$

We denote by $K[V]$, the ring of polynomial functions on $V$ (a.k.a. the coordinate ring of $V$). A polynomial function $f \in K[V]$ is called {\em invariant} if $f(gv) = f(v)$ for all $g \in G$ and $v \in V$. In other words, a polynomial is called invariant if it is constant along orbits. The invariant ring is 
$$
K[V]^G := \{f \in K[V]\ |\ f(gv) = f(v) \ \forall\ g \in G, v \in V\}.
$$

The invariant ring has a natural grading by degree, i.e., $K[V]^G = \oplus_{d=0}^\infty K[V]^G_d$ where $K[V]^G_d$ consists of all invariant polynomials that are homogeneous of degree $d$. For $v \in V$, we denote by $\overline{O_v}$, the closure of the orbit $O_v$. 

\begin{remark} \label{topology}
To define the closure, we need to define a topology on $V$. In this paper, we will only use the fields $K = \R$ or $\C$. Hence, we will use the standard Euclidean topology on $V$ for orbit closures, unless otherwise specified. This is not standard. In literature, the topology is usually taken as the Zariski topology. We will need to use the Zariski topology at times, but we will be careful in specifying it each time. For $K = \C$, the orbit closure w.r.t. Euclidean topology agrees with the orbit closure w.r.t. Zariski topology (in the setting of rational actions of reductive groups). We caution the reader that the interplay between the Euclidean and Zariski topology can be a bit tricky at times for $K = \R$.
\end{remark}


The stabilizer of the action at a point $v \in V$ is defined to be the subgroup $G_v := \{g \in G \ |\ gv = v\}$. We make a few definitions.

\begin{definition}
Let $K = \R$ or $\C$, and let $G$ be an algebraic group (over $K$) with a rational action on a vector space $V$ (over $K$), i.e., $\rho: G \rightarrow \GL(V)$. Let $\Delta$ denote the kernel of the homomorphism $\rho$. Give $V$ the standard Euclidean topology. Then, for $v \in V$, we say $v$ is 
\begin{itemize}
\item {\em unstable} if $0 \in \overline{O_v}$;
\item  {\em semistable} if $0 \notin \overline{O_v}$;
\item  {\em polystable} if $v \neq 0$ and $O_v$ is closed;
\item  {\em stable} if $v$ is polystable and the quotient $G_v/\Delta$ is finite.
\end{itemize}
\end{definition}

We point out again that our definitions may not be quite standard because we use the Euclidean topology. However, this is the form that is most suited for our purposes. Clearly, stable $\implies$ polystable $\implies$ semistable. A point is unstable if and only if it is not semistable. Also, note for any action of $G$ on $V$, there is a natural diagonal action on the direct sum $V^m$ by $g\cdot (v_1,\dots,v_m) = (gv_1,\dots,gv_m)$ for all $g \in G$ and $v_i \in V$. Moreover, note that for any group action $\rho: G\rightarrow \GL(V)$, the notions of semistable, polystable and stable are the same whether we consider the action of $G$ or the action of $\rho(G)$.\footnote{The action of $\rho(G)$ is the obvious one -- as $\rho(G)$ is a subgroup of $\GL(V)$, it acts on $V$ by matrix-vector multiplication.}

We make another important definition:

\begin{definition} \label{defn.gen.stable}
Let $K = \R$ or $\C$, and let $G$ be an algebraic group (over $K$) with a rational action on a vector space $V$ (over $K$). Then, we say $V$ is generically $G$-semistable (resp. polystable, stable, unstable) if there is a non-empty Zariski-open subset $U \subseteq V$ such that every $v \in U$ is $G$-semistable (resp. polystable, stable, unstable).
\end{definition}

The following notion of a null cone plays a central role in computational invariant theory.

\begin{definition} [Null cone] \label{defn.nullcone}
Let $K = \R$ or $\C$, and let $G$ be an algebraic group (over $K$) with a rational action on a vector space $V$ (over $K$). Then, the null cone is defined by
$$
\mathcal{N}_G(V) := \{v \in V\ |\ 0 \in \overline{O_v}\}.
$$
In other words, the null cone consists of all the unstable points in $V$.
\end{definition}

\subsection{MLE for Gaussian group models and invariant theory}
In this subsection, we will briefly recall Gaussian group models and their connections to invariant theory. Suppose $K = \R$ or $\C$. For any group $G$ acting on $K^n$ by linear transformations (i.e., $\rho: G \rightarrow \GL_n$), there is a corresponding {\em Gaussian group model} $\mathcal{M}_G := \{\rho(g)^\dag  \rho(g)\ |\ g \in G\} \subseteq \PD_n$.\footnote{Note that adjoint is the same as transpose for a matrix with real entries.} The following result for Gaussian group models was proved in \cite{AKRS} (we state a more general, but equivalent form of their result). 

\begin{theorem} [\cite{AKRS}] \label{theo:AKRS}
Let $K = \R$ or $\C$, and let $V$ be a finite dimensional Hilbert space, i.e., a vector space with a positive definite inner product (Hermitian when $K = \C$). Let $\rho:G \rightarrow \GL(V)$ be a rational action of $G$ on $V$.  Suppose $\rho(G) \subseteq \GL(V)$ is a Zariski closed subgroup, closed under adjoints and non-zero scalar multiples. Let $G_{\SL} \subseteq G$ be a subgroup such that $\rho(G_{\SL}) = \rho(G) \cap (\SL(V))$ and let $Y \in V^m$ be an $m$-tuple of samples. Then, for the (diagonal) action of $G_{\SL}$, we have
\begin{itemize}
\item  $Y$ is semistable $\Longleftrightarrow l_Y$ is bounded from above; 
\item $Y$ is polystable $\Longleftrightarrow$ an MLE exists;
\item $Y$ is stable $\implies$ there is a unique MLE. Further, if $K = \C$, the converse also holds, i.e., there is a unique MLE $\implies Y$ is stable.
\end{itemize}
\end{theorem}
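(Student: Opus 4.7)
The plan is to reparametrize $\mathcal M_G$ so that maximizing $l_Y$ reduces to minimizing the norm function $g \mapsto \|g \cdot Y\|$ on the $G_{\SL}$-orbit of $Y$, and then to invoke the Kempf--Ness theorem.

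Because $\rho(G)$ is Zariski closed, closed under adjoints, and closed under nonzero scalar multiples, it is reductive with a polar decomposition, and one checks that every $\Psi \in \mathcal M_G$ can be written as $\Psi = \lambda\, h^\dagger h$ with $\lambda > 0$ and $h \in \rho(G_{\SL})$ (take $\lambda = \det(\Psi)^{1/n}$ and $h$ the positive square root of $\Psi/\lambda$, which lies in $\rho(G_{\SL})$ thanks to adjoint-closure). Substituting and using $\det(h^\dagger h) = 1$ gives
\[
l_Y(\lambda h^\dagger h) = \frac{mn}{2}\log\lambda - \frac{\lambda}{2}\|h\cdot Y\|^2 + C,
\]
and optimizing over $\lambda > 0$ (valid when $\|h\cdot Y\| > 0$) produces the profile value $-mn\log\|h\cdot Y\| + C'$. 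Hence $l_Y$ is bounded above iff $\inf_{g \in G_{\SL}}\|g\cdot Y\| > 0$, and a maximum is attained iff this infimum is attained. Kempf--Ness applied to the diagonal action of $\rho(G_{\SL})$ on $V^m$ identifies these with semistability and polystability of $Y$, giving (i) and (ii).

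For (iii), suppose $Y$ is stable. After translating by some $g_0$, assume $Y$ itself is a minimum-norm point. Kempf--Ness says the minimizers on the orbit form a single orbit of $K := U(V)\cap\rho(G_{\SL})$, so any minimizer is $h = ks$ with $k \in K$ and $s \in S := \rho(G_{\SL,Y})$, producing the MLE $\lambda_\star s^\dagger s$. Uniqueness is therefore equivalent to $S \subseteq U(V)$. The key lemma is that $S$ is closed under the adjoint at a min-norm point. Given $s \in S$, polar-decompose $s = UD$ (both factors lie in $\rho(G_{\SL})$ by adjoint- and scalar-closure, plus closure under real powers of positive elements). The function $t \mapsto \|D^t Y\|^2$ is convex in $t$, bounded below by $\|Y\|^2$ (min-norm property), with equality at $t = 0$ and at $t = 1$ (since $sY = Y$ forces $DY = U^{-1}Y$, which has norm $\|Y\|$); so it is constant, which in the eigendecomposition of $D$ forces $DY = Y$. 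Hence $D \in S$, and then $U = sD^{-1}$ and $s^\dagger = DU^{-1}$ also lie in $S$. Stability then makes $S$ finite, so $D$ is a positive self-adjoint element of a finite group; its eigenvalues are simultaneously positive reals and roots of unity, forcing $D = I$ and $s = U \in K$.

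For the converse when $K = \C$, unique MLE again forces $S \subseteq U(V)$ by the same analysis, making $S$ a compact algebraic subgroup of $\GL(V)$. Over $\C$ a compact algebraic group is automatically finite (its identity component would be a compact connected complex Lie group, necessarily trivial), so $Y$ is stable. The main obstacle I anticipate is the adjoint-closure lemma for $S$: the convexity argument works uniformly over $\R$ and $\C$ provided polar decomposition lives inside $\rho(G_{\SL})$, which is precisely what the combined hypotheses on $\rho(G)$ buy, and the failure of the converse over $\R$ is traced exactly to the existence of infinite compact real algebraic subgroups like $\SO(V)$.
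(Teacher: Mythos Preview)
The paper does not give its own proof of this theorem; it is quoted from \cite{AKRS} with only the remark that the stated form is ``more general, but equivalent.'' So there is no in-paper proof to compare against.

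That said, your argument is essentially the proof from \cite{AKRS}: reduce $l_Y$ to a profile in the orbit norm $\|h\cdot Y\|$, then invoke Kempf--Ness to identify boundedness and attainment with semistability and polystability. Your treatment of the uniqueness clause is also the standard one: after translating to a minimum-norm point, the set of MLEs is $\{\lambda_\star s^\dagger s : s\in S\}$ where $S$ is the stabilizer of $Y$ in $\rho(G_{\SL})$, so uniqueness is exactly $S\subseteq U(V)$; you then use the convexity of $t\mapsto\|D^tY\|^2$ to show that the positive part $D$ of any $s\in S$ fixes $Y$, hence $D\in S$, and finiteness of $S$ (from stability, noting the diagonal $\rho(G_{\SL})$-action on $V^m$ has trivial kernel) forces $D=I$. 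For the complex converse, $S\subseteq U(V)$ is Zariski closed and compact, and a compact complex affine algebraic group is finite. All of this is correct, and the steps requiring that polar factors and real powers stay inside $\rho(G_{\SL})$ are exactly what the hypotheses (Zariski closed, self-adjoint, scalar-closed) guarantee via the standard polar/Cartan decomposition for self-adjoint linear algebraic groups.

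One small point worth tightening in your write-up: when you say ``Stability then makes $S$ finite,'' it would help to note explicitly that the kernel of the diagonal action of $\rho(G_{\SL})\subseteq\GL(V)$ on $V^m$ is trivial (an element of $\GL(V)$ fixing every vector is the identity), so finiteness of $G_Y/\Delta$ translates directly to finiteness of $S$. Otherwise the argument is complete.
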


\begin{remark}
In the above result, it suffices to ask for $\rho(G_{\SL})$ and $\rho(G) \cap (\SL(V))$ to have the same identity component since the stability notions for the action of either group will be the same. Indeed, we will need this mild generalization in Theorem~\ref{theo:AKRS-LR} and Proposition~\ref{AKRS-N} below.
\end{remark}

Matrix normal models are Gaussian group models. Consider the so-called {\em Left-Right action} of $G = \GL_p \times \GL_q$ on $V = \Mat_{p,q}$ given by the formula $(P,Q) \cdot Y = PYQ^{-1}$. The Gaussian group model $\mathcal{M}_G = \mathcal{M}(p,q)$. In this case, we can take $G_{\rm SL}$ to be the subgroup $\SL_p \times \SL_q$. This puts us squarely in the setup of semi-invariants for Kronecker quivers, which we will discuss in detail in later sections.

\begin{theorem} [\cite{AKRS}] \label{theo:AKRS-LR}
Let $K = \R$ or $\C$. Let $Y \in \Mat_{p,q}^m$ be an $m$-tuple of matrices. Consider the left-right action of $G_{\SL} = \SL_p \times \SL_q$ on $\Mat_{p,q}^m$. Then, w.r.t. the matrix normal model $\mathcal{M}(p,q)$,
\begin{itemize}
\item  $Y$ is $G_{\SL}$-semistable $\Longleftrightarrow l_Y$ is bounded from above; 
\item $Y$ is $G_{\SL}$-polystable $\Longleftrightarrow$  an MLE exists;
\item $Y$ is $G_{\SL}$-stable $\implies$ there is a unique MLE. Further, if $K = \C$, the converse also holds, i.e., there is a unique MLE $\implies Y$ is $G_{\SL}$-stable.
\end{itemize}
\end{theorem}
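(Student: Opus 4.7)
The plan is to deduce Theorem~\ref{theo:AKRS-LR} directly from the general Theorem~\ref{theo:AKRS} applied to the left-right action of $G = \GL_p \times \GL_q$ on $V = \Mat_{p,q}$, where $V$ is equipped with the Frobenius (Hermitian, when $K = \C$) inner product $\langle A,B\rangle = \tnr(A^\dag B)$, and with $G_{\SL} = \SL_p \times \SL_q$. It suffices to verify the four structural hypotheses of Theorem~\ref{theo:AKRS} --- namely that $\rho(G)$ is Zariski closed in $\GL(V)$, closed under adjoints, closed under nonzero scalar multiples, and that $\rho(G_{\SL})$ shares an identity component with $\rho(G) \cap \SL(V)$ --- together with the identification $\mathcal{M}_G = \mathcal{M}(p,q)$ of the associated Gaussian group model with the matrix normal model.

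In the column-vectorization model, $\rho(P,Q)$ equals the Kronecker product $Q^{-\top} \otimes P \in \GL_{pq}$. Zariski closedness of $\rho(G)$ is standard: the variety of matrices expressible as a Kronecker product of invertible blocks of fixed sizes is cut out by a finite family of quadratic equations (vanishing of appropriate $2\times 2$ minors). Closure under adjoints follows from the identity $\rho(P,Q)^\dag = \overline{Q^{-1}} \otimes P^\dag = \rho(P^\dag, Q^\dag)$, and closure under nonzero scalars from $\lambda\, \rho(P,Q) = \rho(\lambda P, Q)$ for $\lambda \in K \setminus \{0\}$.

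For the $G_{\SL}$-condition I compute $\ker \rho = \{(\lambda I_p, \lambda I_q) : \lambda \in K \setminus \{0\}\}$ and, via the Kronecker determinant formula, $\det \rho(P,Q) = \det(P)^q \det(Q)^{-p}$; hence $\rho(G) \cap \SL(V)$ is the image under $\rho$ of the subvariety $H = \{(P,Q) : \det(P)^q = \det(Q)^p\} \subseteq G$. A dimension count gives $\dim \rho(G_{\SL}) = \dim \rho(H) = p^2 + q^2 - 2$. Over $\C$ both groups are connected (for $\rho(H)$ because $H$ is the kernel of a character on the connected group $G$), so the inclusion $\rho(G_{\SL}) \subseteq \rho(G) \cap \SL(V)$ is an equality. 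Over $\R$ they may differ by a finite component group, but they share the same Lie algebra, so the identity-component refinement of Theorem~\ref{theo:AKRS} noted in the accompanying remark applies. This real-versus-complex discrepancy is the main subtlety of the argument, and it is cleanly absorbed by that remark.

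Finally, for the identification $\mathcal{M}_G = \mathcal{M}(p,q)$, a direct computation yields
\[
\rho(P,Q)^\dag \rho(P,Q) \;=\; \overline{(QQ^\dag)^{-1}} \otimes (P^\dag P).
\]
As $(P,Q)$ varies over $G$, the factor $P^\dag P$ sweeps out $\PD_p$ (by Cholesky) while independently $\overline{(QQ^\dag)^{-1}}$ sweeps out $\PD_q$ (since complex conjugation preserves positive definiteness), so $\mathcal{M}_G$ is precisely the set of Kronecker products of a $\PD_q$-matrix with a $\PD_p$-matrix, which up to the trivial reordering of tensor factors is $\mathcal{M}(p,q)$. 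With all the hypotheses in place, Theorem~\ref{theo:AKRS} immediately yields the three bullets of Theorem~\ref{theo:AKRS-LR}.
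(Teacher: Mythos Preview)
Your approach is exactly the paper's: specialize the general Theorem~\ref{theo:AKRS} (with its identity-component refinement) to the left-right action, after checking that $\mathcal{M}_G=\mathcal{M}(p,q)$ and that $\rho(G)$ is Zariski closed, self-adjoint, and scalar-closed. The paper states this specialization with minimal detail (it is cited from \cite{AKRS}), so your verification of the hypotheses is more thorough than what the paper itself records; the paper's Lemma~\ref{Lemma-LR-sigma} later carries out a related computation.

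One small correction. Your claim that over $\C$ one has $\rho(G_{\SL})=\rho(G)\cap\SL(V)$ exactly, justified by ``$H$ is the kernel of a character on the connected group $G$'', is not right: kernels of characters on connected groups need not be connected. Concretely, with $d=\gcd(p,q)$ the group $H=\{(P,Q):\det(P)^q=\det(Q)^p\}$ has $d$ connected components (look at its image $\{(a,b)\in(\C^*)^2:a^q=b^p\}$ under the determinant map), and since $\ker\rho\cong\C^*$ is connected, $\rho(H)$ also has $d$ components. So for $d>1$ the inclusion $\rho(\SL_p\times\SL_q)\subseteq\rho(G)\cap\SL(V)$ is strict even over $\C$. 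This does not damage your argument: your dimension count plus the connectedness of $\rho(\SL_p\times\SL_q)$ already shows that $\rho(\SL_p\times\SL_q)$ is the identity component of $\rho(G)\cap\SL(V)$, so the remark following Theorem~\ref{theo:AKRS} applies over $\C$ just as it does over $\R$. Simply drop the ``equality over $\C$'' assertion and invoke the identity-component version uniformly.
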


The example below is a concrete illustration of the ideas in this paper that we will use to prove our main result, i.e., Theorem~\ref{theo-main-mnm}.

\begin{example}
We take $m = 2$, $p = 4$, and $q = 7$. For generic $(Y_1,Y_2) \in \Mat_{4,7}^2$, we claim (and justify later) that there is a change of basis (on the left and right) such that both $Y_1$ and $Y_2$ are simultaneously in a block form as below (where all non-starred entries are $0$):

$$
\newcommand*{\tempb}{\multicolumn{1}{|c}{}}
\left[
\begin{array}{ccccccc}
\ast &\ast & \tempb \\ \cline{1-4}
 &  &  \tempb $\ast$ & \ast & \tempb \\ \cline{3-7}
 & & & &\tempb $\ast$ & \ast & \ast  \\
 & & & & \tempb  $\ast$ & \ast & \ast   \\
\end{array}\right]
$$
So, without loss of generality, let us assume $Y_1,Y_2$ are in the form above.

We write ${\rm diag}(d_1,\dots,d_k)$ to represent a diagonal $k \times k$ matrix with diagonal entries $d_1,\dots,d_k$. For $t \neq 0$, consider $\lambda(t) = {\rm diag}(t^7,t^7,t^{-7},t^{-7}) \in \SL_4$ and $\mu(t) = {\rm diag}(t^6,t^6,t^6,t^6,t^{-8},t^{-8},t^{-8}) \in \SL_7$. Let $g(t) = (\lambda(t),\mu(t)) \in \SL_4 \times \SL_7$. Then, one can check that $g(t) \cdot Y_i  = \lambda(t) Y_i \mu(t)^{-1} = t Y_i$ follows from the pattern of zeros. Hence $\lim_{t \to 0} g(t) \cdot Y_i = 0$. This would mean that $Y = (Y_1,Y_2)$ is not semistable because the origin is a limit point of its $\SL_4 \times \SL_7$ orbit. In other words, for the matrix normal model $\mathcal{M}(4,7)$, MLEs do not exist if you only have two samples, which agrees with the observations in Section~\ref{sec:intro-mnm} due to Drton, Kuriki and Hoff.

The fact that a generic $2$-tuple of $4 \times 7$ matrices can be simultaneous block form as mentioned above is a special case of the notion of canonical decomposition (for the $2$-Kronecker quiver) which we discuss in Section~\ref{sec:can.dec}. The ability to drive the matrices $Y_i$ to the origin in the limit using only diagonal group elements and that too of a very particular form is an exhibition of the Hilbert--Mumford criterion, see Theorem~\ref{theo:HM-crit}. In fact, this style of elementary argument could be used to prove Corollary~\ref{lin.ind.unstable}. 
\end{example}

The model of proportional covariance matrices $\mathcal{N}(p,q)$ is also a Gaussian group model. Consider the action of the group $H = GL_p \times {\rm T}_q$ on $V = \Mat_{p,q}$ given again by $(P,Q) \cdot Y = PYQ^{-1}$, where ${\rm T}_q \subseteq \GL_q$ denotes the subgroup of diagonal $q \times q$ matrices (i.e., a $q$-dimensional complex torus). It is easy to observe that the Gaussian group model $\mathcal{M}_H = \mathcal{N}(p,q)$. Further, in this case, one can take $H_{\rm SL}$ to be the subgroup $\SL_p \times \ST_q$ where $\ST_q$ denotes the subgroup of diagonal $q \times q$ matrices with determinant $1$. We remark here that fits the setup of semi-invariants for star quivers (details in later sections). We record this result to use in later sections.

\begin{proposition}  \label{AKRS-N}
Let $K = \R$ or $\C$. Let $Y \in \Mat_{p,q}^m$ be an $m$-tuple of matrices. Consider the aforementioned action of $H_{\SL} = \SL_p \times \ST_q$ on $\Mat_{p,q}^m$. Then, w.r.t. the model $\mathcal{N}(p,q)$,
\begin{itemize}
\item  $Y$ is $H_{\SL}$-semistable $\Longleftrightarrow l_Y$ is bounded from above; 
\item $Y$ is $H_{\SL}$-polystable $\Longleftrightarrow$ an MLE exists;
\item $Y$ is $H_{\SL}$-stable $\implies$ there is a unique MLE. Further, if $K = \C$, the converse also holds, i.e., there is a unique MLE $\implies Y$ is $H_{\SL}$-stable.
\end{itemize}
\end{proposition}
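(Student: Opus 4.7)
The plan is to derive Proposition~\ref{AKRS-N} as a direct application of Theorem~\ref{theo:AKRS} to the action of $H = \GL_p \times {\rm T}_q$ on $V = \Mat_{p,q}$ via $\rho(P,Q)\,Y = PYQ^{-1}$, with $V$ equipped with the Frobenius inner product $\langle A, B\rangle = \tnr(A^\dag B)$. Three things will need to be verified: that $\rho(H) \subseteq \GL(V)$ is Zariski closed and closed under adjoints and nonzero scalar multiples; that the Gaussian group model $\mathcal{M}_H$ coincides with $\mathcal{N}(p,q)$; and that $\rho(H_{\SL})$ and $\rho(H) \cap \SL(V)$ share the same identity component, which by the remark following Theorem~\ref{theo:AKRS} is all that is required to invoke that theorem with $H_{\SL} = \SL_p \times \ST_q$.

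The first two items are essentially computational. Zariski closedness follows because $\rho(H)$ is the image of a homomorphism of algebraic groups. A short computation against the Frobenius inner product gives $\rho(P,Q)^\dag = \rho(P^\dag, Q^\dag)$, and since $Q^\dag$ is again diagonal when $Q$ is, adjoint-closedness follows; closure under nonzero scalars is clear from $c\,\rho(P,Q) = \rho(cP, Q)$. The identification $\mathcal{M}_H = \mathcal{N}(p,q)$ is already indicated in the discussion preceding the proposition and follows from the Kronecker-product structure of $\rho(P,Q)^\dag \rho(P,Q)$: as $(P,Q)$ ranges over $H$, the factor coming from $P$ sweeps out $\PD_p$ while the factor coming from $Q$ sweeps out all positive definite diagonal $q \times q$ matrices.

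The third item is the delicate step, and I expect this bookkeeping with the kernel and the determinant constraint to be the main obstacle. Using $\det(Q^{-T} \otimes P) = \det(P)^q \det(Q)^{-p}$, the subgroup $\rho(H) \cap \SL(V)$ is the image of the codimension-one subgroup $\{(P,Q) \in H : \det(P)^q = \det(Q)^p\}$, while $\rho(H_{\SL})$ is the image of $\SL_p \times \ST_q$. Since the kernel of $\rho$ on $H$ is the one-parameter group $\{(tI_p, tI_q) : t \in K^*\}$, a direct dimension count yields $\dim \rho(H_{\SL}) = p^2 + q - 2 = \dim(\rho(H) \cap \SL(V))$. Combined with the obvious inclusion $\rho(H_{\SL}) \subseteq \rho(H) \cap \SL(V)$, the two groups must have the same identity component. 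With all hypotheses of Theorem~\ref{theo:AKRS} in place, the three bullet points of the proposition follow at once.
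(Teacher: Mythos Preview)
Your proof is correct and follows exactly the approach the paper intends: the paper states Proposition~\ref{AKRS-N} without proof, as a direct consequence of Theorem~\ref{theo:AKRS} together with the remark that it suffices for $\rho(H_{\SL})$ and $\rho(H)\cap\SL(V)$ to share the same identity component, and you have carefully supplied the verifications (adjoint-closedness, scalar-closedness, $\mathcal{M}_H=\mathcal{N}(p,q)$, and the dimension count) that the paper leaves implicit. One small caveat: the assertion that ``the image of a homomorphism of algebraic groups is Zariski closed'' is standard over $\C$ but can fail over $\R$ (e.g.\ $t\mapsto t^2$ on $\R^*$), so for $K=\R$ you should instead note that $\rho(H)$ is cut out inside $\GL(V)$ by the explicit polynomial conditions ``block-diagonal with all blocks scalar multiples of one another,'' which makes Zariski closedness immediate in both cases.
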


\subsection{Invariant theory over $\C$}
For this section, we take $K = \C$ and discuss a few notions in invariant theory. An algebraic group $G$ (over $\C$) is called a reductive group if every rational representation is completely reducible, i.e., it can be decomposed into a direct sum of irreducible representations. There are other equivalent definitions of reductive groups over $\C$. For rational actions of reductive groups, Hilbert \cite{Hilbert1,Hilbert2} showed that the invariant ring is finitely generated. 
 
\begin{remark} \label{reductive.egs}
 The groups $\GL_n = \GL_n(\C)$, $\SL_n  = \SL_n(\C)$, and finite groups are all reductive groups (over $\C$). Direct products of reductive groups are reductive, in particular $G = \GL_{n_1} \times \dots \times \GL_{n_d}$ is reductive. For any $\sigma = (\sigma_1,\dots,\sigma_d) \in \Z^{d}$, the subgroup $G_\sigma = \{(g_1,\dots,g_d) \in G\ |\ \prod_{i=1}^d \det(g_i)^{\sigma_i} = 1 \} \subseteq G$ is also a reductive group. All groups that we consider in this paper fall into the list of aforementioned examples. With reference to Theorem~\ref{theo:AKRS}, note that any complex Zariski closed subgroup of $\GL_n$ that is self-adjoint is a complex reductive group.
\end{remark}

The first point to note about orbit closures is that since invariant polynomials are continuous (w.r.t. either Zariski or Euclidean topology), any invariant polynomial will be constant not just along orbits, but their closures as well. Hence, any invariant polynomial will not be able to distinguish two points $v,w \in V$ if their orbit closures intersect. The converse is also true for rational actions of complex reductive groups (see e.g., \cite[Lemma~3.8]{Hoskins} for a proof).

\begin{theorem} [Mumford] \label{theo:mum}
Suppose $G$ is a (complex) reductive group with a rational action on $V$. Then for $v,w \in V$,
$$
\overline{O_v} \cap \overline{O_w} \neq \emptyset \Longleftrightarrow f(v) = f(w) \ \forall f \in \C[V]^G.
$$
In fact a more general statement is true -- if $W_1,W_2$ are $G$-invariant\footnote{This just means that $g W_i = W_i$ for all $g \in G$.} Zariski-closed subsets with an empty intersection, then there exists $f \in \C[V]^G$ such that $f(W_1) = 0$ and $f(W_2) = 1$.
\end{theorem}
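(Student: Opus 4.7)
The plan is to deduce the equivalence from the more general separation statement, and to prove the latter in two classical steps: Hilbert's Nullstellensatz followed by averaging via the Reynolds operator. The forward direction $\overline{O_v} \cap \overline{O_w} \neq \emptyset \Rightarrow f(v)=f(w)$ is immediate: any $f \in \C[V]^G$ is continuous and constant on $O_v$, hence on $\overline{O_v}$, so any common limit point $z$ forces $f(v) = f(z) = f(w)$. For the converse (and for the general statement) take $W_1 = \overline{O_v}$ and $W_2 = \overline{O_w}$; these are $G$-invariant, and by Remark~\ref{topology} they are genuinely Zariski-closed over $\C$, so it suffices to separate two disjoint $G$-invariant Zariski-closed subsets $W_1, W_2 \subseteq V$ by a single invariant polynomial.

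For the separation, first apply Hilbert's Nullstellensatz: disjointness of $W_1$ and $W_2$ implies $I(W_1) + I(W_2) = \C[V]$, so one may write $1 = h_1 + h_2$ with $h_i$ vanishing on $W_i$, and $h := h_2$ satisfies $h|_{W_1} = 1$, $h|_{W_2} = 0$. This $h$ is not invariant in general, and the second step is to symmetrize it. Reductivity of $G$ furnishes the Reynolds operator $R \colon \C[V] \to \C[V]^G$, the (unique) $\C[V]^G$-linear, $G$-equivariant projection onto the invariants; it comes from the $G$-module decomposition $\C[V] = \C[V]^G \oplus M$, where $M$ is the sum of the non-trivial isotypic components. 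Because the restriction map $\C[V] \to \C[W_i]$ is $G$-equivariant, uniqueness of such projections forces it to intertwine the Reynolds operators on $V$ and on $W_i$. Since $h|_{W_i}$ is already a constant and constants are trivially $G$-invariant, the Reynolds operator on $\C[W_i]$ fixes them, so $R(h)|_{W_i} = h|_{W_i}$. Thus $f := R(h) \in \C[V]^G$ satisfies $f|_{W_1}=1$ and $f|_{W_2}=0$, and specializing to the orbit closures of $v$ and $w$ yields $f(v) \neq f(w)$.

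The main technical obstacle is precisely the construction of the Reynolds operator and the verification that it commutes with restriction to $G$-invariant closed subsets. The existence of $R$ is essentially equivalent to reductivity, i.e.\ complete reducibility of rational $G$-modules, and for the groups of interest in this paper — products of $\GL_n$'s and their determinant-character subgroups, see Remark~\ref{reductive.egs} — this is well known. The compatibility with restriction then drops out of the uniqueness of $R$. A subsidiary point is that one tacitly uses Remark~\ref{topology} to identify Euclidean and Zariski orbit closures over $\C$; without this, the reduction from the orbit-closure form of the statement to the invariant-closed-subset form would not be strictly formal.
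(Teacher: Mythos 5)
Your proof is correct. The paper does not actually prove Theorem~\ref{theo:mum}; it defers to the cited reference (Hoskins, Lemma~3.8), and your argument — Nullstellensatz to write $1 = h_1 + h_2$ with $h_i \in I(W_i)$, then symmetrization by the Reynolds operator — is exactly the standard proof found there, so you have taken essentially the same route as the paper's source. The only presentational point worth tightening is the claim that restriction intertwines the Reynolds operators: rather than appealing to "uniqueness of such projections," justify it directly by noting that the rational $G$-action on $\C[V]$ is locally finite, so the isotypic decomposition exists and any $G$-equivariant map sends the trivial isotypic component to the trivial one and the nontrivial ones to nontrivial ones; this is a standard lemma, not a gap.
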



While the definition of the null cone (see Definition~\ref{defn.nullcone}) is analytic in nature, it happens to be an algebraic variety when we consider rational actions of reductive groups. For a collection of polynomials $\{f_i\} \subseteq \C[V]$, we denote by $\mathbb{V}(\{f_i\})$ the common zero locus of all the $f_i$'s.

\begin{lemma} \label{null.cone.Z-closed}
Let $V$ be a rational representation of a (complex) reductive group $G$. Then
$$
\mathcal{N}_G(V) = \mathbb{V} \left(\bigcup_{d \geq 1} \C[V]^G_d\right).
$$
\end{lemma}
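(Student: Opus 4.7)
The plan is to prove the two inclusions separately, with the forward inclusion $\mathcal{N}_G(V) \subseteq \mathbb{V}\bigl(\bigcup_{d \geq 1} \C[V]^G_d\bigr)$ being essentially formal and the reverse inclusion using Mumford's separation theorem (Theorem~\ref{theo:mum}).

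For the forward direction, suppose $v \in \mathcal{N}_G(V)$, so that $0 \in \overline{O_v}$. Any invariant $f \in \C[V]^G$ is continuous and constant on $O_v$, hence constant on $\overline{O_v}$; in particular $f(v) = f(0)$. If moreover $f$ is homogeneous of degree $d \geq 1$, then $f(0) = 0$, so $f(v) = 0$. Thus $v$ lies in the common zero set of all positive-degree homogeneous invariants.

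For the reverse direction, I will prove the contrapositive: if $v \notin \mathcal{N}_G(V)$, then some positive-degree homogeneous invariant is nonzero at $v$. Since $0 \notin \overline{O_v}$ and $\{0\}$ is a $G$-invariant Zariski-closed subset, the singleton $\{0\}$ and the orbit closure $\overline{O_v}$ are disjoint $G$-invariant Zariski-closed subsets (using the fact from Remark~\ref{topology} that for complex rational actions of reductive groups the Euclidean and Zariski orbit closures agree). By the second part of Theorem~\ref{theo:mum}, there exists $f \in \C[V]^G$ with $f(\overline{O_v}) = 0$ and $f(0) = 1$. Decompose $f = f_0 + f_1 + \cdots + f_N$ into its homogeneous components. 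Since $G$ acts linearly, each $f_i$ is again $G$-invariant, and we have $f_0 = f(0) = 1$. Evaluating at $v$ gives $0 = f(v) = 1 + \sum_{i \geq 1} f_i(v)$, so $\sum_{i \geq 1} f_i(v) = -1$ and in particular $f_i(v) \neq 0$ for some $i \geq 1$. This exhibits a positive-degree homogeneous invariant that does not vanish at $v$, completing the proof.

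The only nontrivial ingredient is the invocation of Mumford's theorem; the rest is bookkeeping with homogeneous components and the observation that the linearity of the action preserves the degree grading on $\C[V]^G$. No separate argument is needed to see that $\mathbb{V}\bigl(\bigcup_{d \geq 1} \C[V]^G_d\bigr)$ is Zariski-closed, as this is immediate from its definition as a common zero locus, which is in fact what makes the lemma useful: the null cone, a priori only a Euclidean-analytic object, is realized as an algebraic variety cut out by the positive-degree invariants.
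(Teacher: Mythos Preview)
Your proof is correct and follows the same approach as the paper, which simply states that the lemma ``follows immediately from Theorem~\ref{theo:mum}''; you have written out the details that the paper leaves implicit. Your use of the second (separation) form of Mumford's theorem together with the homogeneous decomposition is exactly the intended argument.
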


\begin{proof}
This follows immediately from Theorem~\ref{theo:mum}
\end{proof}

An important result in understanding the null cone is the Hilbert--Mumford criterion which says that you can detect whether a point $v \in V$ is in the null cone using $1$-parameter subgroups of $G$. A $1$-parameter subgroup of $G$ is simply a morphism of algebraic groups $\lambda: \C^* \rightarrow G$.

\begin{theorem} [Hilbert--Mumford criterion] \label{theo:HM-crit}
Let $G$ be a (complex) reductive group with a rational action on $V$. Then $v \in \mathcal{N}_G(V)$ if and only if there is a $1$-parameter subgroup $\lambda: \C^* \rightarrow G$ such that $\lim_{t \rightarrow 0} \lambda(t)\cdot v = 0$.
\end{theorem}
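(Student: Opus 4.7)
The direction $(\Leftarrow)$ is immediate: if $\lim_{t\to 0} \lambda(t)\cdot v = 0$, then $0 \in \overline{O_v}$ because every $\lambda(t)\cdot v$ lies in $O_v$.

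For the forward direction, the plan is a curve-lifting argument. Since $0 \in \overline{O_v}$, a standard application of the valuative criterion for constructible sets yields a discrete valuation ring $\mathcal{O}$ with residue field $\C$ and fraction field $K$, together with a morphism $\varphi: \operatorname{Spec}(\mathcal{O}) \to \overline{O_v}$ sending the generic point into $O_v$ and the closed point to $0$. Because the orbit map $G \to O_v$ is smooth and surjective, the generic point of $\varphi$ lifts, after possibly passing to a finite extension of $\mathcal{O}$, to a $K$-point $g \in G(K)$ such that $g\cdot v \in V(K)$ in fact lies in $V(\mathcal{O})$ and vanishes at the closed point.

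The crucial algebraic input is then the Cartan decomposition over the local ring: for any complex reductive group $G$,
\[ G(K) \;=\; G(\mathcal{O})\cdot X_*(T)\cdot G(\mathcal{O}), \]
where $X_*(T)$ denotes the cocharacter lattice of a maximal torus $T\subset G$, identified with a subgroup of $T(K)$ via a uniformizer $u$ of $\mathcal{O}$. Writing $g = g_1 \cdot \lambda \cdot g_2$ with $g_1, g_2 \in G(\mathcal{O})$ and $\lambda \in X_*(T)$, one has well-defined specializations $g_1(0), g_2(0) \in G$. Since $g_1$ is invertible in $G(\mathcal{O})$, the extendability-and-vanishing condition transfers to $\lambda\cdot g_2 \cdot v$. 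Decomposing $V$ into weight spaces for $T$ and expanding $g_2\cdot v$ into its $u$-adic expansion, a direct computation on each weight space shows that $\langle\alpha, \lambda\rangle > 0$ for every weight $\alpha$ appearing in $v' := g_2(0)\cdot v$, whence $\lim_{t\to 0}\lambda(t)\cdot v' = 0$. The conjugate 1-PSG $g_2(0)^{-1}\lambda\, g_2(0)$ then drives $v$ itself to $0$.

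The main technical hurdle is the Cartan decomposition step, which rests on nontrivial structure theory for reductive groups over local rings. The reduction from the analytic condition ``$0\in\overline{O_v}$'' to the algebraic setup (existence of $\mathcal{O}$ and the morphism $\varphi$) is the familiar curve-selection principle, and the weight-space computation verifying $\langle\alpha,\lambda\rangle > 0$ for every weight occurring in $v'$ is a mechanical but illuminating unwinding of the $u$-adic data extracted from the decomposition.
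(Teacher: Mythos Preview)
The paper does not supply a proof of Theorem~\ref{theo:HM-crit}; it is quoted as the classical Hilbert--Mumford criterion and used as a black box. So there is no ``paper's own proof'' to compare against.

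Your sketch is a correct outline of the standard argument (essentially Mumford's proof in GIT, following a suggestion of Iwahori): curve selection to produce a $K$-point of $G$ over a DVR, the Iwahori/Cartan decomposition $G(K) = G(\mathcal{O})\,X_*(T)\,G(\mathcal{O})$, and then the weight-space bookkeeping that forces $\langle \alpha,\lambda\rangle > 0$ on the weights of $g_2(0)\cdot v$. Two small remarks: (i) to make the curve-selection step clean, one typically fixes $\mathcal{O} = \C[[t]]$, $K = \C((t))$ from the outset rather than an unspecified DVR, so that the Cartan decomposition for split reductive groups applies directly; (ii) in the weight-space step, the conclusion $\langle\alpha,\lambda\rangle > 0$ is only guaranteed for weights $\alpha$ with $w_\alpha(0)\neq 0$, which is exactly what is needed for $v' = g_2(0)\cdot v$ --- you state this correctly, but it is worth emphasizing that weights with $w_\alpha(0)=0$ are not controlled, and do not need to be. With those caveats the argument is sound.
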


\begin{definition}
Let $V$ be a rational representation of a reductive group $G$. Then, we define three subsets
\begin{align*}
V^{ss} & := \{v \in V \ |\ v \text{ is $G$-semistable}\}, \\ 
V^{ps} & := \{v \in V \ |\ v \text{ is $G$-polystable}\}, \\
V^{st} & := \{v \in V \ |\ v \text{ is $G$-stable}\}.
\end{align*}
We call $V^{ss}$ (resp. $V^{ps}, V^{st}$) the semistable (resp. polystable, stable) locus. We will write $V^{G\text{-}ss}, V^{G\text{-}ps}, V^{G\text{-}st}$ if we need to clarify the group. 
\end{definition}

Since the semistable locus is precisely the complement of the null cone, the following is immediate from Lemma~\ref{null.cone.Z-closed}:

\begin{corollary} \label{ss-locus-open}
Let $V$ be a rational representation of a complex reductive group $G$. Then, the semistable locus $V^{ss}$ is Zariski-open (but may be empty).
\end{corollary}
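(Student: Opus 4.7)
The plan is very short because nearly all the work has been done by Lemma~\ref{null.cone.Z-closed}. The key observation is the tautological identity
\[
V^{ss} = V \setminus \mathcal{N}_G(V),
\]
which follows immediately from the definitions: a point $v \in V$ is semistable precisely when $0 \notin \overline{O_v}$, i.e.\ when $v \notin \mathcal{N}_G(V)$.

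By Lemma~\ref{null.cone.Z-closed}, the null cone is expressed as the common vanishing locus
\[
\mathcal{N}_G(V) = \mathbb{V}\Bigl(\bigcup_{d \geq 1} \C[V]^G_d\Bigr),
\]
so $\mathcal{N}_G(V)$ is Zariski-closed in $V$ (being the common zero set of a collection of polynomials, irrespective of whether this collection is infinite). Its complement $V^{ss}$ is therefore Zariski-open, which is the claim.

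For the parenthetical remark, I would simply note that the null cone can equal all of $V$: this happens precisely when every positive-degree homogeneous invariant vanishes identically, for example when $G = \GL_1$ acts on $V = \C$ by scalar multiplication (there $0 \in \overline{O_v}$ for every $v$, so $V^{ss} = \emptyset$). There is no subtle obstacle — the only thing worth being careful about is that we do not need finite generation of the invariant ring (which does hold here by Hilbert's theorem) to conclude Zariski-closedness, since an arbitrary intersection of Zariski-closed subsets is Zariski-closed.
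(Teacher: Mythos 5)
Your proposal is correct and is essentially the paper's own argument: the semistable locus is the complement of the null cone, which is Zariski-closed by Lemma~\ref{null.cone.Z-closed}, so $V^{ss}$ is Zariski-open. The extra remarks (the $\GL_1$ example showing $V^{ss}$ may be empty, and the observation that finite generation of the invariant ring is not needed) are accurate but not required.
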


Similar statements are true for the polystable and stable loci. 

\begin{lemma} \label{ps-st-locus-cons}
Let $V$ be a rational representation of a complex reductive group $G$. Then, the stable locus $V^{st}$ is Zariski open and the polystable locus $V^{ps}$ is Zariski-constructible, i.e., it is a union of Zariski locally closed subsets. 
\end{lemma}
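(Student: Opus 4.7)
The plan is to reduce both claims to structural results on the categorical quotient $\pi : V^{ss} \to V^{ss}\sslash G$ (which exists by Hilbert's finiteness theorem) and to invoke Luna's theorems from geometric invariant theory. Since $V^{ss}$ is Zariski open by Corollary~\ref{ss-locus-open} and contains both $V^{ps}$ and $V^{st}$, it suffices to work inside $V^{ss}$; and since Euclidean and Zariski orbit closures coincide for rational actions of complex reductive groups (Remark~\ref{topology}), closedness of orbits is an algebraic condition that one may freely manipulate.

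For the constructibility of $V^{ps}$, the central input I would use is the Luna stratification of the GIT quotient: $V^{ss}\sslash G$ admits a finite disjoint-union decomposition $\bigsqcup_{[H]} S_{[H]}$ into locally closed strata, indexed by conjugacy classes $[H]$ of reductive subgroups of $G$, with $p \in S_{[H]}$ iff the unique closed orbit in $\pi^{-1}(p)$ has stabilizer conjugate to $H$. Using that any non-closed orbit in a fiber of $\pi$ has strictly larger dimension than the (unique) closed orbit in that fiber, a point $v \in \pi^{-1}(S_{[H]})$ lies in $V^{ps}$ iff $\dim G_v = \dim H$. By upper semi-continuity of stabilizer dimension this cuts out a Zariski open subset of the locally closed set $\pi^{-1}(S_{[H]})$, hence a locally closed subset of $V$; the finite disjoint union then exhibits $V^{ps}$ as a constructible set.

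For the openness of $V^{st}$, note that $V^{st} = V^{ps} \cap V_{\max}$ where $V_{\max} := \{v : \dim G_v = \dim \Delta\}$ is already Zariski open by upper semi-continuity, so the previous paragraph gives $V^{st}$ constructible. To upgrade constructibility to openness I would apply Luna's étale slice theorem at any point $v \in V^{st}$: it produces a $G$-saturated Zariski open neighborhood of $O_v$ that is étale-locally modeled on the twisted product $G \times^{G_v} N$ for a slice $N$, and since $G_v/\Delta$ is finite, every $G_v$-orbit on $N$ is closed with finite stabilizer modulo $\Delta$; translating through the slice model, every $G$-orbit in the neighborhood is closed with finite stabilizer modulo $\Delta$, so the entire neighborhood lies in $V^{st}$.

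The main obstacle is the careful invocation of Luna's stratification and slice theorem in the present setup, together with the identification of $V^{ps} \cap \pi^{-1}(S_{[H]})$ via the dimension condition on $G_v$; once these structural inputs are in place, the remaining arguments are elementary semi-continuity of stabilizer dimension and routine bookkeeping.
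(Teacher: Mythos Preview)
Your approach is correct in outline and yields both claims, but it differs substantially from the paper's proof, which is considerably more elementary. The paper never invokes Luna's stratification or the \'etale slice theorem; instead it stratifies $V$ by the Zariski-closed sets $\mathcal{Z}_r = \{v : \dim O_v \leq r\}$ and uses only Mumford's separation theorem (Theorem~\ref{theo:mum}) to produce, for each polystable $w$ with $\dim O_w = r$, an invariant $f$ with $f(O_w) = 1$ and $f(\mathcal{Z}_{r-1}) = 0$. The principal open set $V_f \cap \mathcal{Z}_r$ then consists entirely of polystable points, so the polystable locus inside $\mathcal{Z}_r$ is open in $\mathcal{Z}_r$ and hence locally closed in $V$. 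Taking $r = \dim G/\Delta$ gives the openness of $V^{st}$ directly. Your route trades this hands-on argument for heavier structural machinery; what you gain is a cleaner conceptual picture (polystable points as the minimal-orbit-dimension locus over each Luna stratum), while the paper's argument stays self-contained within the tools already set up in the section.

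One small slip in your constructibility argument: on $\pi^{-1}(S_{[H]})$ every orbit has dimension at least that of the unique closed orbit, so $\dim G_v \leq \dim H$ always holds there, and the condition $\dim G_v = \dim H$ is equivalent to $\dim G_v \geq \dim H$. Upper semi-continuity then makes this a \emph{closed} subset of $\pi^{-1}(S_{[H]})$, not an open one as you wrote. This does not affect the conclusion, since a closed subset of a locally closed set is still locally closed.
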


\begin{proof}
For a non-negative integer $r$, define $\mathcal{Z}_r := \{v \in V \ |\ \dim O_v \leq r\}$. It is easy to show that $\mathcal{Z}_r$ is Zariski closed for all $r$. Let $k = \dim G/\Delta$ (where $\Delta$ is the kernel of the action), then clearly $\mathcal{Z}_k = V$ since no orbit can have dimension larger than $k$.

If $V^{st}$ is empty, then it is Zariski open. If $V^{st}$ is non-empty, there is some $w \in V^{st}$. Hence $G_w/\Delta$ is finite, which means that $\dim O_w = k$ and $O_w$ is Zariski-closed (see Remark~\ref{topology}). Hence $O_w$ and $\mathcal{Z}_{k-1}$ are $G$-invariant Zariski-closed subsets with empty intersection, so by Theorem~\ref{theo:mum}, we have $f \in \C[V]^G$ such that $f(O_w) = 1$ and $f(\mathcal{Z}_{k-1}) = 0$. Consider $V_f :=. \{v \in V | f(v) \neq 0\}$. Then, clearly $V_f \cap \mathcal{Z}_{k-1} = \emptyset$ because $f(\mathcal{Z}_{k-1}) = 0$. Take $v \in V_f$, we have $\dim O_v = k$ since $v \notin \mathcal{Z}_{k-1}$. Further, we claim $O_v$ is closed. If not, take $v_1 \in \overline{O}_v \setminus O_v$. Then, we must have $\dim O_{v_1} < \dim O_v = k$,\footnote{This is well known, but essentially follows from the fact that $O_v$ is Zariski constructible, so contains a dense Zariski-open subset of $\overline{O}_v$. Hence, the dimension of $\overline{O}_v \setminus O_v$ is of smaller than $\dim \overline{O}_v = \dim O_v$. Since $O_{v_1} \subseteq \overline{O}_v \setminus O_v$, we get that $\dim O_{v_1} < \dim O_v$.} so $v_1 \in \mathcal{Z}_{k-1}$. However, since $v_1 \in \overline{O}_v$, we have $f(v_1) = f(v) \neq 0$, which means that $v_1 \in V_f$, which is absurd since $V_f \cap \mathcal{Z}_{k-1}$ is empty. Hence $O_v$ is closed. In other words, $V_f \subseteq V^{st}$. To summarize, we have shown that for each $w \in V^{st}$, there is a Zariski-open subset $V_f$ such that $w \in V_f \subseteq V^{st}$, which means that $V^{st}$ is Zariski-open.

Let $\mathcal{C}_r := \{v \in \mathcal{Z}_r \ |\ v \text{ is polystable}\}$. The above argument shows that $\mathcal{C}_k = V^{st}$ is Zariski-open in $\mathcal{Z}_k = V$. A similar argument shows that $\mathcal{C}_r$ is Zariski-open in $\mathcal{Z}_r$. In particular, this means that $\mathcal{C}_r$ is a Zariski locally closed subset of $V$. Thus $V^{ps} = \cup_{i=1}^k \mathcal{C}_i$ is a union of Zariski locally closed subsets, i.e., it is Zariski-constructible.
\end{proof}

For an algebraic group $G$, we denote its connected component of the identity by $G^0$, which is an algebraic subgroup. The following result tells us that as far as stability notions are concerned, one might as well restrict themselves to the connected component of identity. 

\begin{lemma} \label{lem.get.rid.fin}
Let $V$ be a rational representation of a complex reductive group $G$. Let $G^0 \subseteq G$ denote its connected component of identity. Then for $v \in V$, $v$ is $G$-semistable/stable/polystable if and only if it is $G^0$-semistable/stable/polystable.
\end{lemma}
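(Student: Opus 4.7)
The plan is to exploit the fact that $G^0$ has finite index in $G$. Indeed, since $G$ is an algebraic group, $G/G^0$ is finite, and I can choose a finite set of coset representatives $g_1, \dots, g_k \in G$. Then for every $v \in V$,
$$
O_v^G \;=\; \bigsqcup_{i=1}^k g_i \cdot O_v^{G^0}, \qquad \overline{O_v^G} \;=\; \bigcup_{i=1}^k g_i \cdot \overline{O_v^{G^0}}.
$$
The second identity uses that translation by $g_i$ is a homeomorphism and that a finite union of closed sets is closed. Each case of the lemma will be extracted from this decomposition.

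For semistability, since $g_i \cdot 0 = 0$ for every $i$, we have $0 \in \overline{O_v^G}$ if and only if $0 \in g_i \overline{O_v^{G^0}}$ for some $i$, if and only if $0 \in \overline{O_v^{G^0}}$. Hence $v$ is $G$-semistable iff $v$ is $G^0$-semistable. For polystability (assuming $v \neq 0$, which is a condition independent of the group), the forward direction ``$G^0$-polystable $\Rightarrow$ $G$-polystable'' is immediate: each $g_i O_v^{G^0}$ is closed, so $O_v^G$ is a finite union of closed sets. For the converse, suppose $O_v^G$ is closed; then $\overline{O_v^{G^0}} \subseteq \overline{O_v^G} = O_v^G = \bigsqcup_i g_i O_v^{G^0}$. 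Since $G^0$ is connected, the orbit $O_v^{G^0}$ is the image of the irreducible variety $G^0$, so its closure $\overline{O_v^{G^0}}$ is irreducible. If $\overline{O_v^{G^0}} \setminus O_v^{G^0}$ were non-empty, it would be a $G^0$-invariant closed subset of strictly smaller dimension, and would have to sit inside some $g_j O_v^{G^0}$, contradicting the fact that all of these orbits have the same dimension as $O_v^{G^0}$. Therefore $O_v^{G^0}$ is closed.

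For stability, by the polystability case it suffices to compare the finiteness of $G_v/\Delta$ and $G^0_v/\Delta_{G^0}$, where $G^0_v := G_v \cap G^0$ and $\Delta_{G^0} := \Delta \cap G^0$ (the latter being the kernel of $G^0 \to \GL(V)$). Since $G^0$ is of finite index in $G$, we have $\dim G_v = \dim G^0_v$ and $\dim \Delta = \dim \Delta_{G^0}$. Finiteness of $G_v/\Delta$ is equivalent to $\dim G_v = \dim \Delta$, and likewise for $G^0$, so the two finiteness conditions are equivalent. Combined with the polystable equivalence, this gives the stable equivalence.

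The only slightly delicate step is the backward direction of the polystable case, where one has to rule out that $\overline{O_v^{G^0}}$ properly contains $O_v^{G^0}$; the key point is the combination of irreducibility of the orbit closure with the fact that the $G^0$-orbits it could meet inside $O_v^G$ all have the same dimension.
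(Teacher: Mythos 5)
Your proof is correct, and it is a bit more self-contained than the paper's (which is only a sketch). The common core is the finite coset decomposition $O_v^G=\bigcup_i g_i\,O_v^{G^0}$, but you diverge in the details: for semistability the paper invokes the Hilbert--Mumford criterion (a one-parameter subgroup $\C^*\to G$ automatically lands in $G^0$), whereas you argue directly from the closure decomposition, which is more elementary; for the backward polystability direction the paper observes that the $G^0$-orbits are the connected components of the $G$-orbit (closed orbit $\Rightarrow$ closed components), while you replace this topological remark with an irreducibility-plus-dimension argument on the boundary $\overline{O_v^{G^0}}\setminus O_v^{G^0}$ -- both work, and yours has the advantage of only using the standard fact that boundary orbits have strictly smaller dimension, a fact the paper itself uses elsewhere. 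The stability step (comparing $\dim G_v$ with $\dim(G_v\cap G^0)$ and $\dim\Delta$ with $\dim(\Delta\cap G^0)$ via finite index, and translating finiteness of the quotients into equality of dimensions) matches the paper's orbit--stabilizer argument in substance. Two phrasings are slightly loose but harmless: the union $\bigcup_i g_i O_v^{G^0}$ need not be disjoint (distinct cosets can produce the same $G^0$-orbit), and the boundary of $\overline{O_v^{G^0}}$ need not sit inside a single $g_j O_v^{G^0}$ -- what your argument actually needs, and what is true, is that the $G^0$-orbit of any individual boundary point is some translate $g_j O_v^{G^0}$, hence of full dimension, contradicting the dimension drop on the boundary.
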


\begin{proof}
For semistability, observe that the Hilbert-Mumford criterion is the same whether you use $G$ or $G^0$. For polystability, use the fact that the $G$-orbit is a finite disjoint union of $G^0$-orbits, each of which forms a connected component of the $G$-orbit. For stability, using the orbit-stabilizer theorem (i.e., dimension of stabilizer + dimension of orbit = dimension of the group), we see that $\dim(G_v) = \dim((G^0)_v)$ since the $G$-orbit and $G^0$-orbit have the same dimension. Note that $G_v$ or $(G^0)_v$ are finite if and only if their dimensions are $0$. Since their dimensions are equal, one of them is finite if and only if the other is. 
\end{proof}


\subsection{Invariant theory over $\R$}
Even though invariant theory is nicest when $K = \C$, the case of $K = \R$ is perhaps more important in the context of MLE and statistics in general. Hence, in this subsection, we collect some results on invariant theory over the real numbers which will help us ``transfer'' results from $\C$ to $\R$. We also intend that this subsection serve as a general reference for the reader who is not familiar with the intricacies of invariant theory and algebraic groups over $\R$. The following definitions are from \cite{Borel-linalg}.

A complex (affine) variety $X$ with an $\R$-structure (i.e., an $\R$-subalgebra $\R[X] \subseteq \C[X]$ such that $\R[X] \otimes_\R \C = \C[X]$\footnote{For any complex (affine) variety $Y$, we denote by $\C[Y]$ its coordinate ring}) is called an (affine) $\R$-variety. As a technical point, we identify a variety $X$ with its complex points $X_\C$. A morphism $f:X \rightarrow Y$ of (affine) varieties is equivalent to a map on the coordinate rings $f^*: \C[Y] \rightarrow \C[X]$. The morphism $f$ is said to be defined over $\R$ if $f^*(\R[Y]) \subseteq \R[X]$. A complex algebraic group $G$ is called an $\R$-group if it is an (affine) $\R$-variety such that the multiplication map and inverse map are defined over $\R$, and its real points $G_\R$(i.e., ${\rm Spec}(\R[G])$) is an algebraic group over $\R$.

\begin{remark}
All varieties in this paper will be affine, so we will henceforth drop the prefix affine.
\end{remark}

For this entire section, let $G$ be a connected reductive $\R$-group. Let $V$ be a rational representation of $G$ that is defined over $\R$ -- this means that $V$ is an $\R$-variety, and that $\rho: G \rightarrow \GL(V)$ is defined over $\R$.\footnote{Equivalently, the map $G \times V \rightarrow V$ is defined over $\R$. Also note that if $V$ is an $\R$-variety, then $\GL(V)$ is also an $\R$-variety, and its real points are $\GL(V)_\R = \GL(V_\R)$.} In particular, we get an action of $G_\R$ on $V_\R$.

\begin{remark}
All the groups we consider in this paper will be connected reductive $\R$-groups, and all representations will be defined over $\R$. Groups such as $\GL_n$ and $\SL_n$ are connected reductive, and are very naturally $\R$-groups, and hence so are their direct products. The group $G_\sigma$ as defined in Remark~\ref{reductive.egs} is also naturally an $\R$-group, and is connected if $\sigma$ is indivisible, i.e., ${\rm gcd}(\sigma_1,\dots,\sigma_d) = 1$.
\end{remark}

The following proposition is a fundamental result:

\begin{proposition} \label{inv.base.change}
Let $G$ be a connected reductive $\R$-group. Let $V$ be a rational representation of $G$ that is defined over $\R$. Then, the invariant ring for the action of $G$ on $V$ is obtained by base change from the invariant ring for the action of $G_\R$ on $V_\R$, i.e.,
$$
\R[V]^{G_\R} \otimes_\R \C = \C[V]^G.
$$
In other words, ${\rm Spec}(\C[V]^G)$ (also called the categorical quotient) is naturally an $\R$-variety.
\end{proposition}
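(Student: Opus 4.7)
The plan is to establish the two inclusions $\R[V]^{G_\R} \otimes_\R \C \subseteq \C[V]^G$ and $\C[V]^G \subseteq \R[V]^{G_\R} \otimes_\R \C$ inside $\C[V] = \R[V] \otimes_\R \C$. The only non-formal input I will need is that $G_\R$ is Zariski dense in $G$. This holds because $G$ is connected and smooth (as any characteristic-zero algebraic group), and the identity is a real point; the implicit function theorem then produces a real-analytic neighborhood of $e$ in $G_\R$ of real dimension $\dim_\C G$, which cannot be contained in any proper Zariski-closed subset of $G$. (Note that the reductive hypothesis is not actually needed here, only connectedness.) Consequently $G_\R \times V_\R$ is Zariski dense in $G \times V$.

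The first substantive step is to identify $\R[V]^{G_\R}$ with $\C[V]^G \cap \R[V]$. One direction is immediate: any $G$-invariant $f \in \R[V]$ restricts to a $G_\R$-invariant function on $V_\R$, using that $V_\R$ is $G_\R$-stable because the action is defined over $\R$. Conversely, given $f \in \R[V]^{G_\R}$, I would form $F(g,v) := f(g \cdot v) - f(v) \in \R[G \times V]$. By hypothesis $F$ vanishes identically on $G_\R \times V_\R$, and the Zariski density established above forces $F \equiv 0$, so $f \in \C[V]^G$.

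The final step is to descend via complex conjugation $\sigma$. Because $G$, $V$, and the action map are all defined over $\R$, $\sigma$ acts on $\C[V]$ with fixed subring $\R[V]$ and preserves $\C[V]^G$ setwise. Hence any $f \in \C[V]^G$ decomposes as $f = f_1 + i f_2$ with $f_1 = \tfrac{1}{2}(f + \sigma(f))$ and $f_2 = \tfrac{1}{2i}(f - \sigma(f))$; both pieces lie in $\C[V]^G \cap \R[V] = \R[V]^{G_\R}$, yielding $\C[V]^G \subseteq \R[V]^{G_\R} \otimes_\R \C$. The reverse inclusion is clear from the first step. The assertion that ${\rm Spec}(\C[V]^G)$ is naturally an $\R$-variety is then just a restatement of the base-change identity, with $\R[V]^{G_\R}$ supplying the required $\R$-structure. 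The main (conceptual) obstacle is the Zariski density of $G_\R$ in $G$; once that is in hand, everything else is formal manipulation with the involution $\sigma$.
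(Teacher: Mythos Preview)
Your proof is correct and follows essentially the same approach as the paper's. Both arguments use Zariski density of $G_\R$ in $G$ to show $\R[V]^{G_\R}\subseteq\C[V]^G$, and both obtain the reverse inclusion by splitting $f\in\C[V]^G$ into real and imaginary parts $f_1,f_2\in\R[V]$ and showing each is invariant; the only cosmetic difference is that the paper verifies $f_j\in\R[V]^{G_\R}$ directly via the comodule map $\mu_\R$ (so that step does not itself invoke density), whereas you first argue that complex conjugation preserves $\C[V]^G$ and then appeal to your identification $\R[V]^{G_\R}=\C[V]^G\cap\R[V]$.
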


\begin{proof}
To begin with, first note that by definition $\R[V]$ is the same as $\R[V_\R]$, the coordinate ring of $V_\R$. The action $G \times V \rightarrow V$ gives a map $\mu_\C:\C[V] \rightarrow \C[G] \otimes \C[V]$. Similarly, we get a map $\mu_\R: \R[V] \rightarrow \R[G] \otimes \R[V]$, and note that $\mu_\R \otimes_\R \C = \mu_\C$ is simply a restatement that the action is defined over $\R$.

It is easy to see that the polynomial $f \in \C[V]$ is $G$-invariant if and only if $\mu_\C(f) = 1 \otimes f$. Write $f = f_1 + i f_2$ with $f_j \in \R[V]$. Then, we see that $\mu_\C(f) = \mu_\R(f_1) + i \mu_\R(f_2)$. Thus $\mu_\R(f_1)$ and $\mu_\R(f_2)$ are the real and imaginary parts of $\mu_\C(f)$ which are $1 \otimes f_1$ and $1 \otimes f_2$ respectively. So, for each $j$, we have $\mu_\R(f_j) = 1 \otimes f_j$, which means that $f_j$ is $G_\R$-invariant. This proves $\supseteq$.

To prove $\subseteq$, it suffices to prove that $\R[V]^{G_\R} = \R[V_\R]^{G_\R} \subseteq \C[V]^G$. Indeed, observe that the action of $G$ on $V$ gives an action of $G$ on $\C[V] = \Sym(V^*)$. A function $f \in \C[V]$ is $G$-invariant if and only if $g \cdot f = f$ for all $g \in G$ (easy to see and only uses that $\C$ is an infinite field). Similarly, we have an action of $G_\R$ on $\R[V_\R]$ and $f \in \R[V_\R]^{G_\R}$ if and only if $g \cdot f = f$ for all $g \in G_\R$. Now, suppose $f \in \R[V_\R]^{G_\R} \subseteq \C[V]$. Then, since $g \cdot f = f$ for all $g \in G_\R$ and $G_\R$ is Zariski-dense in $G$, we get that $g \cdot f = f$ for all $g \in G$, so $f \in \C[V]^G$. Thus $\R[V_\R]^{G_\R} \subseteq \C[V]^G$ as required.
\end{proof}

The following result is crucial for transferring our results for complex matrix normal models to real matrix normal models.

\begin{proposition} \label{prop:transfer}
Let $G$ be a connected reductive $\R$-group. Let $V$ be a rational representation of $G$ that is defined over $\R$. Let $v \in V_\R$. Then $v$ is semistable/polystable/stable for the $G_\R$-action if and only if $v$ is semistable/polystable/stable for the $G$-action.
\end{proposition}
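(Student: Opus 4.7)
The strategy is to treat each of the three stability notions in turn, using Proposition~\ref{inv.base.change} to move between real and complex invariants together with Mumford-type separation theorems. For each notion, one implication (from $G$ to $G_\R$) is straightforward because $O_v^{G_\R}\subseteq O_v^G$ and the Euclidean topology on $V_\R$ is induced from that on $V$; the work lies in the reverse direction.

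For \emph{semistability}, the plan is as follows. Assume $v\in V_\R$ is $G$-unstable. By Theorem~\ref{theo:mum} applied to the disjoint (or coincident) closed sets $\overline{O_v^G}$ and $\{0\}$, every positive-degree $f\in\C[V]^G$ satisfies $f(v)=0$. By Proposition~\ref{inv.base.change}, writing each $f$ as $f_1+if_2$ with $f_j\in\R[V_\R]^{G_\R}$ shows that every positive-degree real invariant also vanishes on $v$. To conclude $v$ is $G_\R$-unstable one needs the real analog of the Hilbert--Mumford criterion (due to Birkes): for a connected reductive $\R$-group acting on $V_\R$, a point with all positive-degree real invariants vanishing lies in $\overline{O_v^{G_\R}}^{V_\R}$, in fact via a real one-parameter subgroup. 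I expect this step — invoking the real Hilbert--Mumford theorem — to be the main obstacle, since the analytic structure of real orbit closures is more delicate than in the complex case.

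For \emph{polystability}, the idea is to use the categorical quotient $\pi:V\to V\sslash G=\operatorname{Spec}(\C[V]^G)$, which by Proposition~\ref{inv.base.change} is naturally an $\R$-variety, with $\pi$ defined over $\R$. For $v\in V_\R$, the image $\pi(v)$ is a real point, and its fiber contains a unique closed $G$-orbit $O_w^G$. Since this closed orbit is cut out over $\R$ (being the unique closed orbit in a fiber over a real point), it is $\R$-defined, and its real points form a finite union of closed $G_\R$-orbits. Thus $v$ is $G$-polystable iff $O_v^G$ is that closed orbit iff (since $v\in V_\R$) $v$ lies in a closed $G_\R$-orbit in $V_\R$, i.e., $v$ is $G_\R$-polystable. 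One must also verify that Euclidean-closed $G_\R$-orbits coincide with the orbits that sit closed inside the fiber of the real quotient; this is a standard consequence of the $G_\R$-orbit being a real semialgebraic set of full dimension in its Zariski closure.

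For \emph{stability}, it remains to handle the finite-stabilizer condition on top of polystability. The stabilizer $G_v$ is a complex algebraic subgroup of $G$ defined over $\R$ (because $v\in V_\R$), and $(G_\R)_v=(G_v)_\R$. Since the complex dimension of $G_v$ equals the real dimension of $(G_v)_\R$, one subgroup is zero-dimensional iff the other is, so $G_v/\Delta$ is finite iff $(G_\R)_v/\Delta_\R$ is finite. Combining this with the polystability equivalence yields the stable case. Modulo citing Birkes's real Hilbert--Mumford theorem, the argument is essentially formal: the $\R$-structure on the quotient and on stabilizers does all the work.
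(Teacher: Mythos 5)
Your semistable and stable cases are essentially the paper's argument (real/imaginary parts of complex invariants via Proposition~\ref{inv.base.change} for the easy direction, Birkes's real one-parameter subgroup theorem \cite[Theorem~5.2]{Birkes} for unstability, and the fact that $G_v$ and $\Delta$ are defined over $\R$ so that real and complex stabilizer dimensions agree — though you should compare $\dim G_v$ with $\dim\Delta$, not just ask whether $G_v$ is zero-dimensional, since the kernel $\Delta$ may be positive-dimensional). The genuine gap is in the polystability case, in the direction ``$v$ is $G_\R$-polystable $\implies$ $v$ is $G$-polystable.'' Your chain of iff's silently assumes that a Euclidean-closed $G_\R$-orbit must lie over the unique closed $G$-orbit of its fiber, i.e.\ that Euclidean-closedness of $O_v^{G_\R}$ in $V_\R$ forces Zariski-closedness of $O_v^{G}$. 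You justify this as ``a standard consequence of the $G_\R$-orbit being a real semialgebraic set of full dimension in its Zariski closure,'' but that is not a proof: a Euclidean-closed semialgebraic set of full dimension in its Zariski closure need not exhaust the smooth locus of that closure, and your justification makes no use of the group action or of reductivity, whereas the statement genuinely requires both. This is precisely Birkes's theorem \cite[Corollary~5.3]{Birkes} (alternatively it follows from the real Kempf--Ness theory of Richardson--Slodowy), and it is a substantive result, not a formal consequence of semialgebraicity; the paper simply cites it at this point.

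Also note that your opening claim that the implication from $G$ to $G_\R$ is ``straightforward because $O_v^{G_\R}\subseteq O_v^{G}$'' is only correct for semistability; for polystability that direction is itself nontrivial (the paper cites Borel--Harish-Chandra \cite[Proposition~2.3]{BHC}). Your quotient-map argument does supply a substitute: the closed orbit in the fiber over a real point is conjugation-stable hence defined over $\R$, its real points break into finitely many $G_\R$-orbits, each open and hence closed in that set, which is closed in $V_\R$ — but the finiteness of $G_\R$-orbits on the real points of a complex orbit is itself a citation-worthy fact (finiteness of Galois cohomology of the stabilizer, or Whitney finiteness), so you should state it as such rather than leave it implicit. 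With those citations supplied, your forward direction is a legitimate alternative to the paper's appeal to \cite{BHC}; the reverse direction, however, still needs Birkes.
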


\begin{proof}
Let us split the argument for each notion of stability.
\begin{itemize}
\item {\bf Semistability:} Suppose $v$ is $G$-semistable. Then, there is a homogeneous polynomial invariant $f \in \C[V]^G$ such that $f(v) \neq 0$. Write $f = f_1 + i f_2$ where $f_j \in \R[V]$ and hence in $\R[V]^G$ by the above proposition. Thus, $f_j(v) \neq 0$ for some $j$. By homogeneity, $f_j(0) = 0$. Since $f_i$ is $G_\R$-invariant, the $G_\R$-orbit closure of $v$ cannot contain the origin. For the converse, suppose $v \in V_\R$ is $G$-unstable. Then, by Theorem~\ref{theo:HM-crit} there is a $1$-parameter subgroup that drives $v$ to $0$ in the limit. By a result of Birkes \cite[Theorem~5.2]{Birkes}, you can choose a $1$-parameter subgroup defined over $\R$ that drives $v$ to $0$ in the limit. Hence, $v$ is $G_\R$-unstable.

\item {\bf Polystability:} Suppose the $G$-orbit of $v$ is closed in the Euclidean topology (and hence in the Zariski topology, see Remark~\ref{topology}). Borel and Harish-Chandra \cite[Proposition~2.3]{BHC} show that this implies that the $G_\R$-orbit of $v$ is closed in the Euclidean topology. Birkes showed that if the $G_\R$-orbit of $v$ is closed in the Euclidean topology, then the $G$-orbit is closed in the Zariski topology (and hence the Euclidean topology), see \cite[Corollary~5.3]{Birkes}.

\item {\bf Stability:} Stability is polystability along with the fact that the stabilizer (modulo the kernel) is finite. We already know from above that $v$ is $G$-polystable if and only if $v$ is $G_\R$-polystable. So, we only have to analyze the stabilizers. We will utilize heavily the fact that the dimensions of  Lie algebras reflect the dimensions of the groups themselves (in both the real and complex settings).\footnote{The theory of Lie algebras is well understood and we do not intend to recall the theory here. We refer the interested reader to standard references, e.g., \cite{Borel-linalg, Borel-notes, BHC, Springer}.}  Let $\Delta$ denote the kernel of the representation $\rho: G \rightarrow \GL(V)$

Since $G_v \supseteq \Delta$, we see that $G_v/\Delta$ is finite if and only if we have an equality of Lie algebras $\Lie(G_v) = \Lie(\Delta)$. Since $G_v$ (resp. $\Delta$) is defined over $\R$ (see \cite[Proposition~12.1.2, Corollary~12.1.3]{Springer}), we get that $(G_v)_\R$ (resp. $\Delta_\R$) is a real manifold whose dimension equals the complex dimension of $G_v$ (resp. $\Delta$), see \cite[Section~5.2]{Borel-notes}. In fact, more is true, the Lie algebra of $\Lie((G_v)_\R)$ (resp. $\Lie(\Delta_\R)$) is a real form of $\Lie(G_v)$ (resp. $\Lie(\Delta)$), see \cite[Section~5.3]{Borel-notes}. In other words, we have $ \Lie((G_v)_\R) \otimes_\R \C = \Lie(G_v)$ and $\Lie(\Delta_\R) \otimes_\R \C = \Lie(\Delta)$.


Since, we have inclusions $\Delta \subseteq G_v$ (and consequently $\Delta_\R \subseteq (G_v)_\R$), we get that $\Lie(G_v) = \Lie(\Delta)$ if and only if $\Lie((G_v)_\R) = \Lie(\Delta_\R)$. It is perhaps trivial, but nonetheless necessary to observe that $(G_v)_\R = (G_\R)_v$ and $\Delta_\R$ is the kernel of $\rho_\R$.  Thus $\Lie((G_v)_\R) = \Lie(\Delta_\R)$ if and only if both $(G_\R)_v$ and $\Delta_\R$ have the same dimension if and only if $(G_\R)_v/\Delta_\R$ is finite (since $(G_\R)_v \supseteq \Delta_\R$, and real algebraic groups have finitely many components).

To summarize, we have that $G_v/\Delta$ is finite if and only if $(G_\R)_v/\Delta_\R$ is finite, so $v$ is $G$-stable if and only if it is $G_\R$-stable.

\end{itemize}
\end{proof}


We write $(V^{G\text{-}ss})_\R$ to denote the real points of the set of $G$-semistable points of $V$, i.e., $(V^{G\text{-}ss})_\R = V^{G\text{-}ss} \cap V_\R$. We write $(V_\R)^{G_\R\text{-}ss}$ to denote the $G_\R$-semistable points of $V_\R$. We will use similar notation for polystable and stable loci as well. The following is immediate from Proposition~\ref{prop:transfer}.

\begin{corollary} \label{cor:transfer}
We have $(V^{G\text{-}ss})_\R = (V_\R)^{G_\R\text{-}ss}$, $(V^{G\text{-}ps})_\R = (V_\R)^{G_\R\text{-}ps}$, and $(V^{G\text{-}st})_\R = (V_\R)^{G_\R\text{-}st}$
\end{corollary}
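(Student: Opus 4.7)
The plan is essentially to unpack the definitions on both sides as subsets of $V_\R$ and then apply Proposition~\ref{prop:transfer} pointwise, since the corollary is a direct reformulation of the proposition into set-theoretic language. I would handle the three stability notions in parallel (or one after the other), since the argument in each case is identical.

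First I would fix one of the three notions, say semistability, and write out what each side means. By definition, $(V^{G\text{-}ss})_\R = V^{G\text{-}ss} \cap V_\R$, so a point $v \in V_\R$ belongs to the left-hand side precisely when $v$ is $G$-semistable (viewed as a point of the complex representation $V$). On the other hand, $v \in V_\R$ belongs to $(V_\R)^{G_\R\text{-}ss}$ precisely when $v$ is semistable for the action of $G_\R$ on $V_\R$. Proposition~\ref{prop:transfer} asserts exactly the equivalence of these two conditions for every $v \in V_\R$, so the two subsets of $V_\R$ coincide.

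Next I would repeat the same unpacking for polystability and for stability; the argument is word-for-word the same, using the polystable and stable clauses of Proposition~\ref{prop:transfer} respectively. Since all three equalities of sets follow from pointwise equivalences that are already established, no additional input is needed.

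There is no real obstacle here: the corollary is just a packaging of Proposition~\ref{prop:transfer} into notation that is convenient for later sections. The only thing to be careful about is to keep track of which ambient space a point is being tested in (the complex $V$ versus the real $V_\R$), and to note that intersecting with $V_\R$ on the left-hand side is exactly what lets us invoke the proposition, whose hypothesis requires $v \in V_\R$.
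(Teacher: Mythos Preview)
Your proposal is correct and is exactly the approach taken in the paper, which simply states that the corollary follows immediately from Proposition~\ref{prop:transfer}. You have just spelled out in detail what ``immediately'' means here.
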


Recall the notions of generic $G$-semistability/polystability/stability from Definition~\ref{defn.gen.stable}.

\begin{proposition} \label{gen.stable.transfer}
Let $G$ be a connected reductive $\R$-group. Let $V$ be a rational representation of $G$ that is defined over $\R$. Then $V$ is generically $G$-semistable (resp. $G$-polystable, $G$-stable) if and only if $V_\R$ is generically $G_\R$-semistable (resp. $G_\R$-polystable, $G_\R$-stable).
\end{proposition}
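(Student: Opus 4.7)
The plan is to deduce this from Corollary~\ref{cor:transfer}, which identifies the real points of each complex stability locus with the corresponding $G_\R$-stability locus of $V_\R$. The key additional input is that $V_\R$ is Zariski-dense in $V$, so any non-empty Zariski-open subset of $V$ defined over $\R$ meets $V_\R$ in a non-empty Zariski-open subset of $V_\R$. First I would show that each of $V^{G\text{-}ss}$, $V^{G\text{-}ps}$, $V^{G\text{-}st}$ is defined over $\R$, i.e., invariant under the conjugation $\sigma:V\to V$ induced by the $\R$-structure. Letting $\sigma_G$ denote the analogous conjugation on $G$, the assumption that $\rho$ is defined over $\R$ gives $\sigma(gv) = \sigma_G(g)\sigma(v)$, whence $O_{\sigma(v)} = \sigma(O_v)$ and $G_{\sigma(v)} = \sigma_G(G_v)$; moreover $\sigma_G(\Delta) = \Delta$ since $\Delta=\ker\rho$ is defined over $\R$. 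Because $\sigma$ is a Zariski homeomorphism of $V$, each of the three stability conditions is preserved by $v\mapsto \sigma(v)$.

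For semistability and stability, the loci $V^{G\text{-}ss}$ and $V^{G\text{-}st}$ are already Zariski-open in $V$ by Corollary~\ref{ss-locus-open} and Lemma~\ref{ps-st-locus-cons}, so ``generically $G$-semistable'' (resp.\ stable) is the bare statement that these loci are non-empty. Being defined over $\R$, they are the complements of the zero loci of ideals generated by real polynomials; by Zariski density of $V_\R$ in $V$, each is non-empty precisely when it meets $V_\R$ in a non-empty Zariski-open subset of $V_\R$. Combined with Corollary~\ref{cor:transfer}, this delivers the equivalences for semistability and stability.

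The polystable case is the main subtlety, since $V^{G\text{-}ps}$ is only Zariski-constructible rather than Zariski-open. Here I would invoke the standard fact that in an irreducible variety such as $V$, a constructible subset contains a non-empty Zariski-open subset iff it is Zariski-dense; hence $V$ is generically $G$-polystable iff $V^{G\text{-}ps}$ is Zariski-dense in $V$, and similarly $V_\R$ is generically $G_\R$-polystable iff $V^{G\text{-}ps} \cap V_\R$ is Zariski-dense in $V_\R$. The implication from the real side to the complex side is immediate from Zariski density of $V_\R$ in $V$. For the reverse direction, if $V^{G\text{-}ps}$ is dense in $V$ it contains a non-empty Zariski-open $U \subseteq V$; replacing $U$ by the still non-empty open set $U \cap \sigma(U)$ I may assume $U$ is defined over $\R$, and then $U \cap V_\R$ is a non-empty Zariski-open subset of $V_\R$ contained in $V^{G\text{-}ps}\cap V_\R$, as required.
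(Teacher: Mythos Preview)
Your proof is correct and follows essentially the same approach as the paper: combine Corollary~\ref{cor:transfer} with the Zariski-constructibility of the three loci (Corollary~\ref{ss-locus-open}, Lemma~\ref{ps-st-locus-cons}) and the Zariski density of $V_\R$ in $V$. The paper is a bit more streamlined in that it handles all three cases at once via the single observation that for \emph{any} Zariski-constructible $X\subseteq V$, $X$ contains a dense Zariski-open subset of $V$ iff $X_\R$ contains one of $V_\R$; this holds without assuming $X$ is defined over $\R$ (since any nonempty Zariski-open $U\subseteq V$ already satisfies $U\cap V_\R\neq\emptyset$), so your conjugation argument and the replacement of $U$ by $U\cap\sigma(U)$ are correct but unnecessary.
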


\begin{proof}
Let $X \subseteq V$ be a Zariski-constructible subset. Then it is easy to see that $X$ contains a dense Zariski-open subset of $V$ if and only if $X_\R = X \cap V_\R$ contains a dense Zariski-open subset of $V_\R$. Now, the proposition follows from the fact that $V^{ss}, V^{ps}$ and $V^{st}$ are Zariski-constructible (by Corollary~\ref{ss-locus-open} and Lemma~\ref{ps-st-locus-cons}) along with Corollary~\ref{cor:transfer}.
\end{proof}

\section{Quiver representations} \label{sec:quivers}
The theory of quivers and their representations forms a rich generalization of linear algebra. Numerous applications of quivers have been discovered in various algebraic subjects ranging from cluster algebras and cluster categories \cite{DWZ, Keller}, Schubert calculus \cite{DSW, Ressayre,DW-LR}, moduli spaces, Donaldson-Thomas invariants and cohomological Hall algebras and non-commutative algebraic geometry (see \cite{Reineke} and references therein) and symplectic resolutions (see \cite{Ginzburg} and references therein) to name a few. More recently, the invariant theory of quivers has played an influential role in areas of theoretical computer science, notably to Geometric Complexity Theory and non-commutative identity testing \cite{DM,DM-arbchar, GCTV,IQS,IQS2}, Brascamp--Lieb inequalities \cite{GGOW-BL} and simultaneous robust subspace recovery \cite{CK}.

Let $K$ denote the ground field. The reader should keep in mind $K = \R$ or $\C$. A quiver $Q$ is a directed acyclic graph, i.e. a set of vertices denoted $Q_0$ and a set of arrows $Q_1$. For each arrow $a \in Q_1$, we denote by $ta$ and $ha$, the tail vertex and head vertex of the arrow. We will demonstrate all the basic notions and definitions in the crucial example (below) of the $m$-Kronecker quiver $\Theta(m)$ with two vertices $x$ and $y$ with $m$ arrows $a_1,\dots,a_m$ from $y$ to $x$. 

\begin{center}
\begin{tikzpicture}

\path node (x) at (0,0) []{$x$};
\path node (y) at (3,0) []{$y$};

\draw [<-,thick] (x) .. controls (1,0.5) and (2,0.5) ..
node[midway,above] {$a_1$}
(y);

\draw [<-,thick] (x) .. controls (1,-0.5) and (2,-0.5) ..
node[midway,below] {$a_m$}
(y);

\path node at (1.5,0.2) [] {.};
\path node at (1.5,0) [] {.};
\path node at (1.5,-0.2) [] {.};

\end{tikzpicture}
\end{center}

A representation $V$ of $Q$ is simply an assignment of a finite-dimensional vector space (over a ground field $K$) $V(x)$ for each $x \in Q_0$ and a linear transformation $V(a): V(ta) \rightarrow V(ha)$ for each arrow $a \in Q_1$.  A morphism of quiver representations $\phi: V \rightarrow W$ is a collection of linear maps $\phi(x): V(x) \rightarrow W(x)$ for each $x \in Q_0$ subject to the condition that for every $a \in Q_1$, the diagram below commutes.

\begin{center}
\begin{tikzcd}
V(ta) \arrow[r, "V(a)"] \arrow[d,, "\phi(ta)"]
& V(ha) \arrow[d, "\phi_{ha}"] \\
W(ta) \arrow[r, "W(a)"]
& W(ha)
\end{tikzcd}
\end{center}

A representation $V$ of $\Theta(m)$ is given by assigning vector spaces $V(x)$ and $V(y)$ to $x$ and $y$, and $m$ linear maps $V(a_1),\dots,V(a_m)$ from $V(y)$ to $V(x)$. A morphism between two representations $V$ and $W$ of $\Theta(m)$ is two linear maps $\phi(x): V(x) \rightarrow W(x)$ and $\phi(y): V(y) \rightarrow W(y)$ such that $\phi(x) \circ V(a_i) = W(a_i) \circ \phi(y)$ for all $1 \leq i \leq m$.

A subrepresentation $U$ of $V$ is a collection of subspaces $U(x) \subseteq V(x)$ such that for every edge the linear map $U(a)$ is simply a restriction of $V(a)$. In particular, this means that the image of $U(ta)$ under $V(a)$ will need to be contained in $U(ha)$. For two representations $V$ and $W$, we define their direct sum $V \oplus W$ to be the representation that assigns $V(x) \oplus W(x)$ to each vertex $x$ and the linear map $\begin{pmatrix} V(a) & 0 \\ 0 & W(a) \end{pmatrix}$ for each arrow $a \in Q_1$. Similarly, the notion of direct summand, image, kernel, co-image, etc are all defined in the obvious way, see \cite{DW-book} for details. In summary, the category of quiver representations forms an abelian category.

The dimension vector of a representation $V$ is $\underline{\dim}(V) = (\dim V(x))_{x \in Q_0}$. So, for a representation $V$ of $\Theta(m)$, its dimension vector is $\underline{\dim}(V) = (\dim(V(x)), \dim(V(y)))$.  
For any representation $V$ of a quiver $Q$, if we pick bases for $V(x)$ and $V(y)$, then this identifies $V(x)$ with $K^{\dim(V(x))}$ and $V(y)$ with $K^{\dim(V(y))}$. Further, with this identification, every linear map $V(a)$ is just a matrix of size $\dim(V(ha)) \times \dim(V(ta))$. Thus, we come to the following definition. For any dimension vector $\alpha = (\alpha(x))_{x \in Q_0} \in \N^{Q_0}$ (where $\N = \{0,1,2,\dots,\}$), we define the representation space
$$
\Rep(Q,\alpha) = \bigoplus_{a \in Q_1} \Mat_{\alpha(ha),\alpha(ta)}.
$$

Any point $V= (V(a))_{a \in Q_1} \in \Rep(Q,\alpha)$ can be interpreted as a representation of $Q$ with dimension vector as follows: for each $x \in Q_0$, assign the vector space $K^{\alpha(x)}$, and for each arrow $a \in Q_1$, the matrix $V(a)$ describes a linear transformation from $K^{\alpha(ta)}$ to $K^{\alpha(ha)}$. The base change group $\GL(\alpha) = \prod_{x \in Q_0} \GL_{\alpha(x)}$ acts on $\Rep(Q,\alpha)$ in a natural fashion where $\GL_{\alpha(x)}$ acts on the vector space $K^{\alpha(x)}$ assigned to vertex $x$. More concretely, for $g = (g_x)_{x \in Q_0} \in \GL(\alpha)$ and $V = (V(a))_{a \in Q_1} \in \Rep(Q,\alpha)$, the point $g \cdot V \in \Rep(Q,\alpha)$ is defined by the formula
$$
(g \cdot V) (a) = g_{ha} V(a) g_{ta}^{-1}.
$$

The $\GL(\alpha)$ orbits in $\Rep(Q,\alpha)$ are in $1-1$ correspondence with isomorphism classes of $\alpha$-dimensional representations.

Consider the subgroup $\SL(\alpha) = \prod_{x \in Q_0} \SL(\alpha(x)) \subseteq \GL(\alpha)$. Then, the invariant ring for the action of $\SL(\alpha)$ on $\Rep(Q,\alpha)$ is called the ring of semi-invariants
$$
\SI(Q,\alpha) = K[\Rep(Q,\alpha)]^{\SL(\alpha)}.
$$

For the $m$-Kronecker quiver $\Theta(m)$, suppose we pick a dimension vector $\alpha = (p,q)$ (we use the convention that the first entry corresponds to vertex $x$), then the representation space 
$$
\Rep(\Theta(m),(p,q)) = \Mat_{p,q}^m.
$$

Now, $\GL(\alpha) = \GL_p \times \GL_q$, and the action is given by the formula 
$$
(g_1,g_2) \cdot (Y_1,\dots,Y_m) = (g_1 Y_1g_2^{-1},\dots, g_1 Y_m g_2^{-1}).
$$

The orbits of this action correspond to isomorphism classes of $(p,q)$-dimensional representations of $\Theta(m)$. The subgroup $\SL(\alpha) = \SL_p \times \SL_q$. First, observe that $Y = (Y_1,\dots,Y_m)$ is semistable/polystable/stable (for the action of $\SL_p \times \SL_q$) if and only if $\lambda Y = (\lambda Y_1,\dots, \lambda Y_m)$ is semistable/polystable/stable for $\lambda \in K^*$. This is a simple consequence of the fact that the action is by linear transformations. Thus, we see that whether $Y = (Y_1,\dots,Y_m)$ is semistable, polystable, or stable (for the action of $\SL(\alpha)$) only depends on the isomorphism class of the quiver representation it defines (i.e., the $\GL(\alpha)$-orbit). This is the starting point of understanding the various stability notions from a representation theoretic perspective, which we will discuss in more detail in the next section. 

\begin{remark}
The space $\Rep(Q,\alpha)$ is a representation of $\GL(\alpha)$ and its various subgroups such as $\SL(\alpha)$. At the same time, we refer to a point $V \in \Rep(Q,\alpha)$ also as a representation. We advise the reader to keep in mind that we think of $V$ as a representation of the quiver $Q$ to avoid confusion. Moreover, if $V \in \Rep(Q,\alpha)_\R = \bigoplus_{a \in Q_1} \Mat_{\alpha(ha),\alpha(ta)}(\R)$, then it can be thought of as both a real and complex representation of $Q$.
\end{remark}

\subsection{Indecomposability of modules over field extensions}
Let $A$ be a finite-dimensional $\R$-algebra, and let $A\text{-}\Mod$ denote the category of finite dimensional (left)-modules over $A$. We denote by $A_\C := A \otimes_\R \C$, the $\C$-algebra obtained by extending scalars. Let $A_\C\text{-}\Mod$ denote the category of finite dimensional (left)-modules over $A_\C$. For any module $M \in A\text{-}\Mod$, let $M_\C := M \otimes_\R \C \in A_\C\text{-}\Mod$. The $A_\C$ structure on $M_\C$ is the obvious one you get by extending scalars. We can interpret $M_\C$ as an $A$-module, and as an $A$-module, we have $M_\C = M \oplus i M$ and hence $M_\C = M^{\oplus 2}$ as $A$-modules.

The Krull-Remak-Schmidt Theorem says that modules in the category $A\text{-}\Mod$ can be decomposed as a direct sum of indecomposables and this decomposition
is essentially unique in the sense that  any two such decompositions  have the same summands (counted with multiplicities). The Krull-Remak-Schmidt theorem holds for $A_\C\text{-}\Mod$ as well. Note that an indecomposable module is one that cannot be wrtiten as a direct sum of two or more (proper) submodules, and not to be confused with an irreducible (or simple) module, which is a module with no non-trivial submodules.

\begin{lemma}
Let $M \in A\text{-}\Mod$. If $M_\C \in A_\C\text{-}\Mod$ is indecomposable, then $M$ is indecomposable.
\end{lemma}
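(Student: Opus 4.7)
The plan is to prove the contrapositive: if $M$ is decomposable as an $A$-module, then $M_\C$ is decomposable as an $A_\C$-module. This direction is straightforward because extension of scalars is compatible with direct sums; the subtle direction (indecomposability of $M$ implying indecomposability of $M_\C$) is the one that fails in general, so nothing deeper is required here.

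First, I would suppose $M = M_1 \oplus M_2$ is a decomposition of $M$ as an $A$-module with both $M_1$ and $M_2$ nonzero. Tensoring with $\C$ over $\R$ is an additive functor (equivalently, $\C$ is flat, indeed free, as an $\R$-module), so it commutes with finite direct sums. Thus
$$
M_\C = M \otimes_\R \C = (M_1 \otimes_\R \C) \oplus (M_2 \otimes_\R \C) = (M_1)_\C \oplus (M_2)_\C,
$$
and this is a decomposition as $A_\C$-modules (since the $A_\C$-module structure on each side is the one induced from the $A$-module structure by extension of scalars, which respects the direct sum).

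Second, I would verify that neither summand is zero: if $M_i$ is a nonzero finite-dimensional $\R$-vector space, then $M_i \otimes_\R \C$ is a nonzero $\C$-vector space (its dimension over $\C$ equals $\dim_\R M_i > 0$). Therefore $M_\C = (M_1)_\C \oplus (M_2)_\C$ is a nontrivial decomposition of $M_\C$ in $A_\C\text{-}\Mod$, contradicting the assumed indecomposability of $M_\C$.

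There really is no main obstacle to this direction — it is an immediate consequence of the fact that $- \otimes_\R \C$ is an additive functor sending nonzero modules to nonzero modules. (The genuinely delicate issue, which is not asked about in this lemma but is the reason for stating it, is whether $M$ indecomposable forces $M_\C$ indecomposable; that will require more input, e.g., Krull–Remak–Schmidt and Galois descent arguments, presumably addressed in a subsequent lemma.)
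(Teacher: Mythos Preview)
Your proof is correct and follows exactly the same approach as the paper: prove the contrapositive by observing that a decomposition $M = M_1 \oplus M_2$ yields $M_\C = (M_1)_\C \oplus (M_2)_\C$. You include a bit more detail (additivity of $-\otimes_\R \C$, nonvanishing of the summands) than the paper's one-line proof, but the argument is identical.
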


\begin{proof}
If $M = M_1 \oplus M_2$, then $M_\C = (M_1)_\C \oplus (M_2)_\C$ is a decomposition of $M_\C$ as $A_\C$-modules.
\end{proof}

We will need the following lemma.

\begin{lemma} \label{lem.3indec}
Let $M \in A\text{-}\Mod$. Suppose $M_\C$ can be written as a direct sum of three or more (non-zero) submodules (as an $A_\C$-module). Then $M$ is not indecomposable as an $A$-module.
\end{lemma}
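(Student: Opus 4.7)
The plan is to argue by contrapositive: assume $M$ is indecomposable as an $A$-module and derive that any $A_\C$-decomposition $M_\C=N_1\oplus\cdots\oplus N_k$ into non-zero submodules must have $k\leq 2$. The engine is Krull-Remak-Schmidt applied in the category $A\text{-}\Mod$, combined with the identification of $M_\C$ as an $A$-module that was already noted in the paragraph preceding the lemma.

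The first step is to use the fact, recorded just above the statement, that $M_\C \cong M \oplus M$ as $A$-modules. (As an $\R$-vector space $M_\C = M \oplus iM$, and multiplication by $i$ is $A$-linear since $i$ commutes with the $A$-action, giving an $A$-isomorphism $iM \cong M$.) Under the hypothesis that $M$ is $A$-indecomposable, this displays $M \oplus M$ as a Krull-Remak-Schmidt decomposition of $M_\C$ in $A\text{-}\Mod$, so by the uniqueness clause of Krull-Remak-Schmidt, \emph{every} decomposition of $M_\C$ into indecomposable $A$-modules has exactly two summands.

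The second step is to contradict this count. Given any $A_\C$-decomposition $M_\C = N_1 \oplus \cdots \oplus N_k$ with each $N_i \neq 0$, each $N_i$ is in particular a non-zero $A$-submodule, and we may refine each $N_i$ into a direct sum of indecomposable $A$-modules, contributing at least one summand per $i$. Concatenating these refinements yields a decomposition of $M_\C$ in $A\text{-}\Mod$ into indecomposables with at least $k$ summands. If $k \geq 3$ this produces at least three indecomposable $A$-summands, contradicting the uniqueness established in the previous step, so $M$ cannot have been indecomposable. There is no real obstacle in this argument; the only subtle point is to remember that even though the $N_i$ are $A_\C$-modules, they can be viewed as $A$-modules for the purpose of applying Krull-Remak-Schmidt in $A\text{-}\Mod$.
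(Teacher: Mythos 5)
Your argument is correct and is essentially the same as the paper's: both identify $M_\C \cong M\oplus M$ as $A$-modules, invoke Krull--Remak--Schmidt in $A\text{-}\Mod$ to conclude that if $M$ were indecomposable every refinement into $A$-indecomposables would have exactly two summands, and then refine the given $A_\C$-decomposition with $k\geq 3$ parts to get a contradiction. No gaps; your extra remark that multiplication by $i$ gives the $A$-isomorphism $iM\cong M$ just makes explicit what the paper states in the paragraph preceding the lemma.
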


\begin{proof}
Let $M_\C = N_1 \oplus N_2 \oplus \dots \oplus N_d$ with $d \geq 3$. Each $N_i$ is an $A_\C$-module summand, and hence an $A$-module summand as well. If we further refine the $N_i$ into a direct sum of indecomposable $A$-modules, we can write $M_\C = N'_1 \oplus N'_2 \oplus \dots N'_{d'}$ for some $d' \geq d \geq 3$, where each $N'_i$ is an indecomposable $A$-module. Suppose $M$ is indecomposable. The module $M_\C  = M \oplus i M$ as an $A$-module. Hence, by the Krull-Remak-Schmidt theorem, we know that any decomposition into indecomposables has to have exactly two summands (and each of which is isomorphic to $M$ as $A$-modules). But this contradicts the fact that $M_\C = N'_1 \oplus N'_2 \oplus \dots N'_{d'}$ is a decomposition with $d' > 2$ summands.
\end{proof}

\subsection{Quiver representations as modules over the path algebras}
Let $K$ denote the ground field. For a quiver $Q = (Q_0,Q_1)$, we will define the path algebra $KQ$. A path $p$ of length $k$ is a sequence of $k$ arrows $a_ka_{k-1}\dots a_1$ such that $t(a_{i+1}) = h(a_i)$ for $1 \leq i < k$. The head vertex of the path is $h(a_k)$ and the tail vertex is $t(a_1)$. We introduce trivial paths $e_x$ of length zero for each $x \in Q_0$ with $h(e_x) = t(e_x) = x$.

The path algebra $KQ$ is a $K$-algebra with a basis labeled by all paths in $Q$. The multiplication is as follows. For paths $p$ and $q$, their product $p \cdot q$ is the concatenation of the two paths if $tp = hq$ and $0$ otherwise. For any representation $V$ of $Q$ over $K$, we can interpret it as $KQ$-module $\oplus_{x \in Q_0} V(x)$. For $w \in V(x)$, and a path $p = a_k a_{k-1} a_{k-2} \dots a_1$, the action is given by $p \cdot w = V(a_k) V(a_{k-1})\dots V(a_1) w \in V_{hp}$ if $tp = x$, and $0$ otherwise. This is in fact an equivalence of categories, see \cite{DW-book} for details.

Of particular importance is the fact that if we take $A = \R Q$, then $A_\C = \C Q$, and hence Lemma~\ref{lem.3indec} applies.

\section{Stability notions for quiver representations} \label{sec:stability}
We follow the conventions from \cite{DW-book} for consistency. For this section, we let $K =\C$. Let $Q$ be a quiver with no oriented cycles (self loops are counted as oriented cycles). Let $\alpha$ be a dimension vector. For any $\sigma \in \Z^{Q_0}$ (which we call a weight), we have a character of $\GL(\alpha)$ which we also denote $\sigma$ by abuse of notation. The character $\sigma: \GL(\alpha) \rightarrow K^*$ is given by $\sigma((g_x)_{x \in Q_0}) = \prod_{x \in Q_0} \det(g_x)^{\sigma(x)}$. The ring of semi-invariants has a decomposition

$$
{\rm SI}(Q,\alpha) = \bigoplus_{\sigma \in \Z^{Q_0}} {\rm SI}(Q,\alpha)_\sigma,
$$

where $\SI(Q,\alpha)_\sigma = \{f \in {\rm SI}(Q,\alpha) \ |\ f(g \cdot x) = \sigma(g^{-1}) f(x)\  \forall g \in \GL(\alpha)\}$.

We define the effective cone of weights 
$$
C(Q,\alpha) := \{\sigma \in \Z^{Q_0}\ |\ {\rm SI}(Q,\alpha)_{m\sigma} \neq 0\ \text{for some } m \in \Z_{>0}\}.
$$

For a weight $\sigma$ and a dimension vector $\beta$, we define $\sigma(\beta) := \sum_{x \in Q_0} \sigma(x) \beta(x)$. We point out there that every $\sigma \in C(Q,\alpha)$ must satisfy $\sigma(\alpha) = 0$. For each $0 \neq \sigma \in C(Q,\alpha)$ that is indivisible (i.e., ${\rm gcd}(\sigma(x) : x \in Q_0) = 1$), we consider the subring
$$
\SI(Q,\alpha,\sigma) := \oplus_{m=0}^\infty \SI(Q,\alpha)_{m\sigma}.
$$

For a sincere dimension vector $\alpha$ (i.e., $\alpha(x) \neq 0 \ \forall x \in Q_0$), it turns out that this subring can also be seen as an invariant ring, i.e., $\SI(Q,\alpha,\sigma) = K[\Rep(Q,\alpha)]^{\GL(\alpha)_\sigma}$ where $\GL(\alpha)_\sigma = \{g \in \GL(\alpha)\ |\ \sigma(g) = 1\}$. Note that $\GL(\alpha)_\sigma$ is a reductive group. It is well-known that the associated projective variety ${\rm Proj}(\SI(Q,\alpha,\sigma))$ defines a moduli space for the $\alpha$-dimensional representations of $Q$, see \cite{King}.

We make a definition following King \cite{King}. We follow the convention from \cite{DW-book} which is consistent with our notational choices so far, but differs from King's original convention by a sign.

\begin{definition} [King \cite{King}] \label{crit-king}
Let $Q$ be a quiver with no oriented cycles, $V$ be a representation of $Q$ and $\sigma \in \Z^{Q_0}$ a weight such that $\sigma(\underline\dim V) = 0$.
\begin{itemize}
\item $V$ is $\sigma$-semistable if $\sigma(\beta) \leq 0$ for all $\beta \in \Z_{\geq 0}^{Q_0}$ such that $V$ contains a subrepresentation of dimension $\beta$.
\item $V$ is $\sigma$-stable if $\sigma(\beta) < 0$ for all $\beta \in \Z_{\geq 0}^{Q_0}$ (other than $0$ and $\underline{\dim}(V)$) such that $V$ contains a subrepresentation of dimension $\beta$.
\item $V$ is $\sigma$-polystable if $V = V_1 \oplus V_2 \oplus \dots \oplus V_k$ such that $V_i$ are all $\sigma$-stable representations.
\end{itemize}
\end{definition}

Observe here that any $\sigma$-stable representation must be indecomposable, i.e., it cannot be written as a direct sum of (proper) subrepresentations. Indeed, suppose $V = V_1 \oplus V_2$, then $0 = \sigma(\underline{\dim} V) =  \sigma(\underline{\dim} V_1) + \sigma(\underline{\dim} V_2)$. Hence at least one of $\sigma(\underline{\dim} V_i) \geq 0$, and hence $V$ cannot be $\sigma$-stable. Also observe that if $V$ is a direct sum $V = V_1 \oplus V_2 \oplus \dots \oplus V_k$, then $V$ is $\sigma$-semistable (or $\sigma$-polystable) if and only if all the $V_i$ are. Thus, in order to understand whether a generic representation of dimension $\alpha$ is $\sigma$-semistable/polystable/stable, it is useful to understand how it decomposes as a direct sum of indecomposables, which is the topic of discussion in the next section.

We now relate $\sigma$-stability notions to $\GL(\alpha)_\sigma$-stability notions:

\begin{theorem} [King \cite{King}] \label{theo:King}
Let $Q$ be a quiver with no oriented cycles, $\alpha \in \Z_{>0}^{Q_0}$ a sincere dimension vector and $0 \neq \sigma \in C(Q,\alpha)$ an indivisible weight. A representation $V \in \Rep(Q,\alpha)$ is $\sigma$-semistable (resp. $\sigma$-polystable, $\sigma$-stable) if and only if $V$ is $\GL(\alpha)_\sigma$-semistable (resp. $\GL(\alpha)_\sigma$-polystable, $\GL(\alpha)_\sigma$-stable).
\end{theorem}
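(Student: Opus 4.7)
\textit{Proof plan.} The strategy combines the Hilbert--Mumford numerical criterion for affine GIT with the classical dictionary translating one-parameter subgroups of $G := \GL(\alpha)$ into filtrations of $V$ by subrepresentations. Because $\alpha$ is sincere we have $\SI(Q,\alpha,\sigma) = \C[\Rep(Q,\alpha)]^{G_\sigma}$ where $G_\sigma = \ker(\sigma\colon G\to \C^*)$, so $V$ is $G_\sigma$-semistable iff some homogeneous $\sigma^m$-semi-invariant $(m>0)$ is nonzero at $V$. The character-linearised form of Theorem~\ref{theo:HM-crit} then recasts this as: $\langle \sigma, \lambda \rangle \leq 0$ for every one-parameter subgroup $\lambda$ of $G$ such that $\lim_{t\to 0}\lambda(t)\cdot V$ exists in $\Rep(Q,\alpha)$.

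Next I would install the $1$-PS/filtration dictionary. After conjugating, $\lambda$ acts diagonally on each $V(x)$ in some basis, with integer weights $w_x(i)$. Set $V_{\geq n}(x) := \mathrm{span}\{e_{x,i} : w_x(i)\geq n\}$. The formula $(\lambda(t)\cdot V)(a)_{ij} = t^{w_{ha}(i) - w_{ta}(j)} V(a)_{ij}$ shows that $\lim_{t\to 0}\lambda(t)\cdot V$ exists iff each $V_{\geq n}$ is a subrepresentation of $V$. Since $\sigma(\alpha) = 0$, a uniform shift of all weights leaves both the action on $V$ and $\langle\sigma,\lambda\rangle$ unchanged, so I may normalise $\min w_x(i) = 0$; the telescoping identity $\sum_i w_x(i) = \sum_{n\geq 1} \dim V_{\geq n}(x)$ then gives
\[
\langle \sigma, \lambda \rangle \;=\; \sum_{n\geq 1} \sigma(\underline{\dim}\, V_{\geq n}).
\]
With this dictionary the semistable case drops out: if every subrepresentation $W$ has $\sigma(\underline{\dim}\,W)\leq 0$, then $\langle\sigma,\lambda\rangle\leq 0$ for every admissible $\lambda$; conversely a subrepresentation $W$ with $\sigma(\underline{\dim}\,W) > 0$ produces a violating 1-PS $\lambda_W$ by weighting a basis of $W(x)$ by $1$ and a vector-space complement by $0$ (the subrep property makes the arrow maps block upper-triangular so the limit exists). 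The stable case is strictly parallel, using strict inequalities throughout; the finite-stabiliser condition for $G_\sigma/\Delta$ (where $\Delta$ is the kernel of the action) translates, via the usual orbit-stabiliser analysis applied to a one-parameter subgroup that would otherwise fix $V$ modulo $\Delta$, into the statement that no proper nonzero subrepresentation has $\sigma$-weight exactly zero.

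The main obstacle is the polystable equivalence, because King's \emph{algebraic} decomposition into $\sigma$-stable summands must be matched with the \emph{geometric} condition of orbit closedness. For the easy direction I would show that a direct sum $V = \bigoplus_i V_i$ of $\sigma$-stables has closed $G_\sigma$-orbit: each summand has closed orbit by the stable case already proven, and semi-invariants separating the $V_i$ from other isomorphism classes combine to separate $V$ from any representation in its orbit closure not isomorphic to $V$ itself. For the converse I would follow King's original argument and verify that the full subcategory of $\sigma$-semistable representations of $\sigma$-weight $0$ is abelian with simple objects exactly the $\sigma$-stables, so that any $\sigma$-semistable $V$ admits a Jordan--H\"older filtration whose associated graded $\mathrm{gr}(V)$ is $\sigma$-polystable; if $V \not\cong \mathrm{gr}(V)$, the 1-PS rescaling the filtration has $\langle\sigma,\lambda\rangle = 0$ and degenerates $V$ to $\mathrm{gr}(V)$, producing a point of $\overline{G_\sigma\cdot V}\setminus G_\sigma\cdot V$ and contradicting closedness. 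Setting up the abelian-subcategory framework and making this degeneration argument rigorous is the part I expect to require the most care.
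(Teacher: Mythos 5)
Your route is genuinely different from the paper's. The paper does not reprove King's theorem at all: it cites King's original formulation -- $\sigma$-(semi/poly)stability of $V$ expressed as $\GL(\alpha)$-(semi/poly)stability of the augmented point $(V,1)\in\Rep(Q,\alpha)\oplus\C_\sigma$ -- and then, in Appendix~\ref{App.stability}, supplies only the formal bridge: a short invariant-theoretic argument (multiplying semi-invariants by powers of the auxiliary coordinate $z$, the generalized Hilbert--Mumford criterion for closed orbits, and a stabilizer comparison) showing that $\GL(\alpha)$-stability notions of $(V,1)$ coincide with $\GL(\alpha)_\sigma$-stability notions of $V$ under the stated hypotheses. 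You instead propose to redo King's proof from scratch: the numerical criterion for the character-linearized problem, the $1$-PS/filtration dictionary, and the abelian subcategory of weight-zero semistables with Jordan--H\"older/S-equivalence for polystability. That is a valid and standard route (essentially King's own) and buys self-containedness; the paper's route is far shorter because precisely that work is outsourced to \cite{King}.

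Two caveats on your sketch. First, the ``character-linearised form'' of Theorem~\ref{theo:HM-crit} is not the plain affine criterion stated in the paper, and you genuinely need it: your $1$-PS $\lambda_W$ attached to a subrepresentation $W$ with $\sigma(\underline{\dim}\,W)>0$ has $\langle\sigma,\lambda_W\rangle\neq 0$, so it does not lie in $\GL(\alpha)_\sigma$, and the plain Hilbert--Mumford criterion for $\GL(\alpha)_\sigma$ does not apply to it directly. The usual proof of the linearized criterion passes through exactly the auxiliary space $\Rep(Q,\alpha)\oplus\C_\sigma$ that the appendix uses, so you are black-boxing, not avoiding, that bridging step. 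Second, your ``easy direction'' of polystability is argued incorrectly as stated: semi-invariants are constant on orbit closures, so no collection of semi-invariants can separate a polystable $V$ from a point of $\overline{O_V}\setminus O_V$ and thereby certify closedness. The correct argument is the same mechanism you reserve for the converse: by the generalized Hilbert--Mumford criterion any point of $\overline{\GL(\alpha)_\sigma\cdot V}$ is a limit along a $1$-PS, such a limit is the associated graded of a filtration by subrepresentations of $\sigma$-weight zero, and for a direct sum of $\sigma$-stables every such filtration splits (Schur's lemma in the weight-zero subcategory), so the limit lies in the orbit. With those repairs your plan goes through, but it is a substantially longer path than the paper's citation-plus-appendix argument.
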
 

King's original formulation is slightly different from the one above, but can be seen to be equivalent (details in Appendix~\ref{App.stability}). Now, we proceed to discuss these stability notions for the $m$-Kronecker quiver.

\subsection{Stability notions for the $m$-Kronecker quiver}
For the $m$-Kronecker quiver $\Theta(m)$, let us take $\alpha = (p,q)$. Let $p' = p/{\rm gcd}(p,q)$ and $q' = q/{\rm gcd}(p,q)$. Then 
$$
\SI(\Theta(m),(p,q)) = \bigoplus_{k=0}^\infty \SI(\Theta(m), (p,q))_{(-kq',kp')}.
$$

Indeed, observe that for $\sigma \in \Z^2$ to be in the cone of effective weights $C(\Theta(m), (p,q))$, we need $\sigma(p,q) = 0$. This means that any $\sigma \in C(\Theta(m), (p,q))$ must be a multiple of $(-q',p')$. As it turns out, non-trivial semi-invariants do not exist when you take a weight that is a negative scalar multiple of $(-q',p')$, which one can see directly from the fundamental theorem that describes semi-invariants of quivers in a determinantal fashion \cite{DW-LR, DZ, SVd} (see also \cite[Theorem~10.7.1]{DW-book}).

Further, this means that 
$$
\C[\Rep(\Theta(m), (p,q)]^{\SL_p \times \SL_q} = \C[\Rep(\Theta(m), (p,q))]^{\SL(\alpha)} = \C[\Rep(\Theta(m),(p,q)]^{\GL(\alpha)_{(-q',p')}}.
$$

In fact, we have the following result:

\begin{lemma} \label{Lemma-LR-sigma}
Let $\rho: \GL_p \times \GL_q \rightarrow \GL(\Rep(\Theta(m), (p,q))$ be the left-right action. Let $\alpha = (p,q)$ and let $p' = p/{\rm gcd}(p,q)$ and $q' = q/{\rm gcd}(p,q)$. Then $$
\rho(\SL_p \times \SL_q) = \rho(\GL(\alpha)_{(-q',p')}).
$$
In particular, this means that $\sigma$-semistability (resp. polystability, stability) for $\sigma = (-q',p')$ is the same as semistability (resp. polystability, stability) for the $\SL_p \times \SL_q$-action.
\end{lemma}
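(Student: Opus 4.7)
\emph{Plan.} The inclusion $\rho(\SL_p \times \SL_q) \subseteq \rho(\GL(\alpha)_{(-q',p')})$ is immediate, since any element of $\SL_p \times \SL_q$ automatically satisfies $\det(g_1)^{-q'}\det(g_2)^{p'} = 1$. For the reverse inclusion, the key observation is that scalar pairs act trivially: for any $\lambda \in \C^*$,
$$\rho(\lambda I_p, \lambda I_q) Y = (\lambda I_p)\, Y\, (\lambda I_q)^{-1} = Y.$$
Consequently, to rewrite $\rho(g_1,g_2)$ as $\rho(h_1,h_2)$ with $(h_1,h_2) \in \SL_p \times \SL_q$, it suffices to find $\lambda \in \C^*$ such that $(\lambda^{-1}g_1, \lambda^{-1}g_2) \in \SL_p \times \SL_q$, i.e., with $\lambda^p = \det(g_1)$ and $\lambda^q = \det(g_2)$.

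Setting $a := \det(g_1)$ and $b := \det(g_2)$, the defining condition of $\GL(\alpha)_{(-q',p')}$ reads $a^{q'} = b^{p'}$. Writing $\lambda = \mu^{1/d}$ (any choice of $d$-th root in $\C^*$, where $d = \gcd(p,q)$), the system $\lambda^p = a$, $\lambda^q = b$ becomes $\mu^{p'} = a$, $\mu^{q'} = b$. Since $\gcd(p',q') = 1$, Bezout provides integers $u,v$ with $up' + vq' = 1$, and I propose the explicit solution $\mu := a^u b^v$. A one-line check, using $b^{p'} = a^{q'}$ to convert $b^{vp'} = a^{vq'}$, yields $\mu^{p'} = a^{up' + vq'} = a$; a symmetric computation gives $\mu^{q'} = b$. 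The divisibility of $\C^*$ then supplies the required $d$-th root $\lambda$ of $\mu$.

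The ``In particular'' clause is then immediate from the general fact (already noted in the paper) that the stability notions depend only on the image $\rho(G)$ inside $\GL(V)$, not on $G$ itself. There is no serious obstacle in the argument; the entire content lies in turning the effective-weight relation $a^{q'} = b^{p'}$ into the solvability of $\lambda^p = a$, $\lambda^q = b$, which is exactly what Bezout plus divisibility of $\C^*$ accomplish. The essential use of $K = \C$ appears here, in the extraction of the $d$-th root.
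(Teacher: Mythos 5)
Your proposal is correct and follows essentially the same route as the paper: both proofs rest on the observation that scalar pairs $(\lambda I_p,\lambda I_q)$ act trivially under the left--right action, and then use the coprimality of $p'$ and $q'$ to produce a scalar fixing both determinants to $1$ (you solve $\lambda^p=\det(g_1)$, $\lambda^q=\det(g_2)$ directly via Bezout and divisibility of $\C^*$, while the paper first normalizes $\det(g_1)=1$ and then applies the Chinese remainder theorem to a root of unity --- an equivalent arithmetic maneuver). The ``in particular'' clause is handled exactly as in the paper, by appealing to the fact that the stability notions depend only on the image $\rho(G)\subseteq\GL(V)$.
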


\begin{proof}
Since $\SL_p \times \SL_q \subseteq \GL(\alpha)_{(-q',p')}$, we only need to show $\supseteq$. Suppose $(g,h) \in \GL(\alpha)_{(-q',p')}$. This means that $\det(g)^{q'} = \det(h)^{p'}$. Note that $\rho(g,h) = (g \otimes (h^{-1})^\top)^{\oplus m}$. Without loss of generality, we can assume that $\det(g) = 1$ (otherwise, choose a $\lambda$ such that $\det(\lambda g ) = 1$ and replace $(g,h)$ with $(\lambda g, \lambda h)$). Thus, we have $\det(h)^{p'} = 1$. Thus $\det(h) = e^{2 \pi i n/p'}$ for some $n$. Now, choose an integer $t$ such that $t \equiv 0$ mod $q'$ and $t \equiv -n$ mod $p'$. Such a $t$ exists by the Chinese remainder theorem since $p'$ and $q'$ are coprime. Let $\mu = e^{2\pi i t/dp'q'}$, where $d = {\rm gcd}(p,q) = p/p' = q/q'$. Then $\det(\mu g) = \mu^p = e^{2\pi i t/q'} = 1$ and $\det(\mu h) = \mu^{q} \cdot e^{2\pi i n/p'}= e^{2\pi i t/p'}\cdot e^{2\pi i n/p'} = 1$. Now, observe that $\rho(\mu g, \mu h) = \rho(g,h)$ and $\mu g \in \SL_p$ and $\mu h \in \SL_q$.

\end{proof}

From Lemma~\ref{Lemma-LR-sigma} and Theorem~\ref{theo:King}, we deduce:

\begin{corollary}
Consider the $G = \SL_p \times \SL_q$-action on $\Rep(\Theta(m),(p,q))$, and let $\sigma = (-q',p')$, where $p' = p/{\rm gcd}(p,q)$ and $q' = q/{\rm gcd}(p,q)$. A representation $V \in \Rep(\Theta(m),(p,q))$ is 
\begin{itemize}
\item semistable if and only if $\sigma(\beta) \leq 0$ for all dimension vectors $\beta$ such that $V$ has a subrepresentation of dimension $\beta$.
\item stable if and only if $\sigma(\beta) < 0$ for all dimension vectors $\beta$ (other than $0$ and $(p,q)$) such that $V$ has a subrepresentation of dimension $\beta$.
\item polystable if and only if $V$ is a direct sum of $\sigma$-stable representations.
\end{itemize}
\end{corollary}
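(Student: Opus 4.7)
The plan is to chain together the three immediately preceding results. Definition~\ref{crit-king} already gives the combinatorial characterization of $\sigma$-semistability/polystability/stability in terms of subrepresentations, which is precisely what the corollary is asserting for the weight $\sigma = (-q',p')$. Theorem~\ref{theo:King} converts these combinatorial notions into GIT stability for the action of $\GL(\alpha)_\sigma$, and Lemma~\ref{Lemma-LR-sigma} identifies the images of $\SL_p \times \SL_q$ and $\GL(\alpha)_{(-q',p')}$ inside $\GL(\Rep(\Theta(m),(p,q)))$. Since semistability, polystability, and stability each depend only on the image of the acting group (as noted immediately after the definition of stability in the excerpt), the last identification allows us to freely replace $\GL(\alpha)_\sigma$ by $\SL_p \times \SL_q$.

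To execute this, the first step is to verify the hypotheses of Theorem~\ref{theo:King} for $\alpha = (p,q)$ and $\sigma = (-q',p')$. Sincerity of $\alpha$ is automatic since $p,q \geq 1$. The weight $\sigma$ is indivisible by construction, because $\gcd(p',q') = 1$ after dividing out $\gcd(p,q)$. Finally, $\sigma \in C(\Theta(m),(p,q))$ is exactly the content of the decomposition $\SI(\Theta(m),(p,q)) = \bigoplus_{k \geq 0} \SI(\Theta(m),(p,q))_{(-kq',kp')}$ displayed just before Lemma~\ref{Lemma-LR-sigma}, together with the fact (recalled there via the fundamental theorem on semi-invariants of quivers) that this cone is nontrivial.

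With the hypotheses confirmed, Theorem~\ref{theo:King} and Lemma~\ref{Lemma-LR-sigma} together yield the three equivalences: $V$ is $\SL_p \times \SL_q$-semistable iff it is $\GL(\alpha)_\sigma$-semistable iff it is $\sigma$-semistable, and similarly for polystable and stable. Unpacking Definition~\ref{crit-king} for $\sigma = (-q',p')$ then produces the subrepresentation inequalities $\sigma(\beta) \leq 0$ (respectively $< 0$ with the usual exclusion of $0$ and $(p,q)$) stated in the corollary, and the direct-sum characterization of polystability follows directly from that definition.

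There is no real obstacle here; the corollary is essentially a bookkeeping consequence of the three preceding results. The only place that warrants attention is verifying that the hypotheses of Theorem~\ref{theo:King} apply, namely indivisibility of $\sigma$ and its effectivity, both of which have already been arranged by the way $p',q',\sigma$ were defined and by the graded decomposition of $\SI(\Theta(m),(p,q))$ recorded just above.
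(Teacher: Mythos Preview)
Your proposal is correct and follows exactly the route the paper takes: the paper's entire justification is the single line ``From Lemma~\ref{Lemma-LR-sigma} and Theorem~\ref{theo:King}, we deduce,'' and you have simply unpacked that deduction (together with Definition~\ref{crit-king}) and checked the hypotheses. One very minor point: your claim that the fundamental theorem on semi-invariants establishes $\sigma \in C(\Theta(m),(p,q))$ slightly overstates what the paper records there (it is used to rule out the \emph{negative} ray), but this does not affect the argument since in the degenerate case where the cone is trivial both sides of each equivalence are vacuously false.
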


A simple corollary of the above is the following, which will be very useful.

\begin{corollary} \label{lin.ind.unstable}
Let $V \in \Rep(\Theta(m), (p,q)).$ Suppose $V = V_1 \oplus V_2 \dots \oplus V_k$ is the decomposition of $V$ into indecomposables, and let $\beta_i = \underline{\dim}(V_i)$. If for some $i$ and $j$, $\beta_i$ and $\beta_j$ are linearly independent, then $V$ is unstable (w.r.t. $\SL_p \times \SL_q$ action). 
\end{corollary}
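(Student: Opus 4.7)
The plan is to apply the King-style numerical criterion stated in the corollary immediately preceding this one, i.e. semistability of $V$ under $\SL_p \times \SL_q$ is equivalent to $\sigma(\gamma) \leq 0$ for every subrepresentation dimension vector $\gamma$, where $\sigma = (-q',p')$ with $p' = p/d$, $q' = q/d$, $d = \gcd(p,q)$. I will argue by contradiction assuming $V$ is semistable.

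First I handle $K = \C$. Each indecomposable summand $V_i$ is in particular a subrepresentation, so semistability forces $\sigma(\beta_i) \le 0$ for every $i$. On the other hand, summing over $i$ gives
$$
\sum_{i=1}^{k} \sigma(\beta_i) \;=\; \sigma\!\Big(\sum_{i=1}^{k} \beta_i\Big) \;=\; \sigma(p,q) \;=\; -q' p + p' q \;=\; 0,
$$
so each summand must in fact satisfy $\sigma(\beta_i) = 0$. Since $\sigma \neq 0$, its kernel in $\Q^{2}$ is one-dimensional; hence the hypothesis that some pair $\beta_i, \beta_j$ is linearly independent cannot hold. This contradiction shows $V$ is unstable over $\C$.

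To transfer to $K = \R$, I would observe that an $\R$-decomposition $V = V_1 \oplus \cdots \oplus V_k$ extends, after tensoring with $\C$, to a direct-sum decomposition $V_\C = (V_1)_\C \oplus \cdots \oplus (V_k)_\C$ as complex representations of $\Theta(m)$ (the individual $(V_i)_\C$ may split further, but they still have dimension vector $\beta_i$). The preceding paragraph applies verbatim to this coarser decomposition, producing two linearly independent subrepresentation dimension vectors of $V_\C$ that both annihilate $\sigma$, so $V_\C$ is $\SL_p(\C) \times \SL_q(\C)$-unstable. Proposition~\ref{prop:transfer} (equivalently Corollary~\ref{cor:transfer}) then yields that $V$ itself is $\SL_p(\R) \times \SL_q(\R)$-unstable.

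There is no real obstacle here; the whole argument is a short numerical consequence of King's criterion plus the fact that $\ker \sigma$ is a line. The only mildly delicate point is the real case, where one must be careful that indecomposability over $\R$ need not imply indecomposability over $\C$ — but this does not matter because the argument only uses that each $V_i$ (or $(V_i)_\C$) is a direct summand, and the dimension vectors are preserved under base change, which is exactly what the transfer machinery of Section~\ref{sec:inv.thry} is designed to exploit.
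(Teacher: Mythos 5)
Your argument is correct and is essentially the paper's own proof: semistability forces $\sigma(\beta_i)\le 0$ for every summand, the sum $\sigma(p,q)=0$ forces equality, and the one-dimensionality of $\ker\sigma$ rules out two linearly independent $\beta_i$. The extra paragraph on $K=\R$ is harmless but not needed here, since the corollary sits in a section where $K=\C$ is assumed; the paper performs exactly that complexification-and-transfer step later, in a footnote to Lemma~\ref{uni-MLE-indec}, where the real case actually arises.
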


\begin{proof}
If $V$ is to be semistable, then $\sigma(\beta_i) \leq 0$ for all $i$ and further $\sigma(\sum_i \beta_i)  = \sigma((p,q))= 0$. This means that $\sigma(\beta_i) = 0$ for all $i$. However, the kernel of $\sigma$ is clearly $1$-dimensional, so both $\beta_i$ and $\beta_j$ cannot be in the kernel if they are linearly independent.
\end{proof}

\section{Canonical decomposition} \label{sec:can.dec}
For this section, we assume $K = \C$. Let $Q$ be a quiver with no oriented cycles and let $\alpha$ be a dimension vector. Every representation $V \in \Rep(Q,\alpha)$ can be decomposed into a direct sum $V = V_1 \oplus V_2 \oplus \dots \oplus V_k$ where each $V_i$ is an indecomposable representation. The Krull-Remak-Schmidt theorem tells us that the summands that occur in any such decomposition are isomorphic (upto permutation). Of course, this decomposition will be different for different choices of $V \in \Rep(Q,\alpha)$, but for a (non-empty) Zariski-open subset of $\Rep(Q,\alpha)$, the dimension vectors of the indecomposables in the decomposition will be the same. This brings us to the definition of canonical decomposition that was first defined by Kac.

\begin{definition} [canonical decomposition \cite{Kac,Kac2}]
We write $\alpha = \beta_1 \oplus \dots \oplus \beta_k$ and call it the canonical decomposition if a generic representation $V \in \Rep(Q,\alpha)$ decomposes as a direct sum of indecomposables whose dimension vectors are $\beta_1,\dots,\beta_k$. 
\end{definition}

The existence and uniqueness of canonical decomposition requires a little argument and we refer the reader to \cite{DW-book}. To fully understand canonical decomposition, we need to recall the notion of roots. We need to define a bilinear form on $\R^{Q_0}$. For $\alpha,\beta \in \R^{Q_0}$, we define 
$$
\left< \alpha,\beta \right> = \sum_{x \in Q_0} \alpha(x)\beta(x) - \sum_{a \in Q_1} \alpha(ta)\beta(ha).
$$

\begin{definition}
A dimension vector $\alpha$ is called a root if there is an indecomposable representation of dimension $\alpha$. A root is called real if $\left<\alpha,\alpha\right> = 1$, isotropic if $\left<\alpha,\alpha\right> = 0$ and non-isotropic imaginary if $\left<\alpha,\alpha\right> < 0$. Further, it is called a Schur root if there exists a non-empty Zariski open subset of $\Rep(Q,\alpha)$ such that every representation in it is indecomposable. Note that isotropic roots are also considered imaginary roots.
\end{definition}

For the rest of this section, we fix a quiver $Q$ with no oriented cycles. We will recall some standard results. First two lemmas that are straightforward, see \cite{DW,DW-book}.

\begin{lemma} \label{obvious}
For any Schur root $\alpha$, its canonical decomposition is $\alpha = \alpha$.
\end{lemma}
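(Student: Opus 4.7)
The plan is to prove this essentially by unraveling the two definitions and observing that two non-empty Zariski-open subsets of the irreducible affine variety $\Rep(Q,\alpha)$ must meet.

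First, let $\alpha = \beta_1 \oplus \cdots \oplus \beta_k$ denote the canonical decomposition (whose existence and uniqueness is being assumed per the remark after the definition). By definition, there is a non-empty Zariski-open subset $U_1 \subseteq \Rep(Q,\alpha)$ such that every $V \in U_1$ decomposes into indecomposables with dimension vectors exactly $\beta_1, \dots, \beta_k$. Second, since $\alpha$ is a Schur root, there is a non-empty Zariski-open subset $U_2 \subseteq \Rep(Q,\alpha)$ such that every $V \in U_2$ is indecomposable.

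Next, I would invoke irreducibility of $\Rep(Q,\alpha) = \bigoplus_{a \in Q_1} \Mat_{\alpha(ha),\alpha(ta)}$ (it is an affine space) to conclude that $U_1 \cap U_2$ is non-empty. Pick any $V$ in this intersection. Membership in $U_2$ tells us $V$ is indecomposable, while membership in $U_1$ tells us $V$ decomposes into indecomposables of dimension vectors $\beta_1, \dots, \beta_k$. Applying the Krull--Remak--Schmidt theorem, the indecomposable decomposition of $V$ is unique up to permutation of summands, so $k = 1$ and $\beta_1 = \alpha$. Hence the canonical decomposition of $\alpha$ is simply $\alpha = \alpha$.

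There is no real obstacle; the only subtlety is ensuring the two generic conditions are compatible, which is immediate from the irreducibility of $\Rep(Q,\alpha)$. The argument is essentially a definition-chase combined with Krull--Remak--Schmidt.
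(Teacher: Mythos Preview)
Your proof is correct and is precisely the natural definition-chase argument. The paper does not actually supply its own proof of this lemma; it simply labels it as straightforward and refers to \cite{DW,DW-book}, so your argument is the expected one.
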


\begin{lemma}
Suppose $\alpha = \beta_1 \oplus \beta_2 \oplus \dots \oplus \beta_k$ is the canonical decomposition of $\alpha$. Then each $\beta_i$ is a Schur root. 
\end{lemma}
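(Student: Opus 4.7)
The plan is to argue by contradiction, exploiting the definition of canonical decomposition together with the uniqueness in the Krull--Remak--Schmidt theorem. Suppose some $\beta_i$ is not a Schur root. I would consider the ``direct-sum'' morphism
\[
\phi\colon \GL(\alpha) \times \Rep(Q,\beta_1) \times \cdots \times \Rep(Q,\beta_k) \longrightarrow \Rep(Q,\alpha),\qquad (g, W_1, \ldots, W_k) \longmapsto g\cdot (W_1 \oplus \cdots \oplus W_k),
\]
where each direct sum is regarded as a point of $\Rep(Q,\alpha)$ via a fixed identification $\bigoplus_j \C^{\beta_j(x)} \cong \C^{\alpha(x)}$ at each vertex $x \in Q_0$. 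By the definition of canonical decomposition, there is a dense open $U \subseteq \Rep(Q,\alpha)$ every point of which is isomorphic to a direct sum of $k$ indecomposables whose dimension vectors form the multiset $\{\beta_1,\ldots,\beta_k\}$. In particular $U$ lies in the image of $\phi$, so $\phi$ is dominant, and $\phi^{-1}(U)$ is a dense open subset of the (irreducible) domain.

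Next I would project $\phi^{-1}(U)$ onto the $\Rep(Q,\beta_i)$ factor. Any dense open subset of a product of irreducible varieties has dense image under projection onto any factor (otherwise it would be contained in a proper closed cylinder, contradicting density), and its image is constructible by Chevalley's theorem, so it contains a dense open subset $U_i \subseteq \Rep(Q,\beta_i)$. Hence for every $W_i \in U_i$ there exist $W_j \in \Rep(Q,\beta_j)$ for $j \neq i$ and $g \in \GL(\alpha)$ such that $V := g\cdot (W_1 \oplus \cdots \oplus W_k) \in U$.

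For such a $V$, on one hand the Krull--Remak--Schmidt decomposition of $V$ has exactly $k$ indecomposable summands (since $V \in U$). On the other hand, if each $W_j$ itself decomposes into indecomposables as $W_j = W_{j,1} \oplus \cdots \oplus W_{j,s_j}$, then $V \cong \bigoplus_{j,l} W_{j,l}$ is a direct-sum decomposition into $\sum_j s_j$ indecomposables. Uniqueness of Krull--Remak--Schmidt forces $\sum_j s_j = k$, and since each $s_j \geq 1$ this gives $s_j = 1$ for every $j$; in particular $s_i = 1$, so $W_i$ is indecomposable. Thus every member of the dense open $U_i \subseteq \Rep(Q,\beta_i)$ is indecomposable, which is precisely the statement that $\beta_i$ is a Schur root. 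The main obstacle is the projection step: one must verify carefully that the projection of a dense open subset of an irreducible product to one factor is again dense, which is the standard variety-theoretic fact quoted above; everything else is immediate from the definitions and from Krull--Remak--Schmidt for finite-dimensional modules over $\C Q$.
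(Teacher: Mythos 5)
Your argument is correct. Note, though, that the paper does not prove this lemma at all: it is labelled ``straightforward'' and delegated to Derksen--Weyman (\cite{DW,DW-book}), so there is no in-paper proof to compare against. Your route is the standard one, and it is in the same spirit as the paper's toolkit: the dominant direct-sum morphism $\phi\colon \GL(\alpha)\times\prod_j\Rep(Q,\beta_j)\to\Rep(Q,\alpha)$ you introduce is exactly the map the paper itself uses later in the proof of Corollary~\ref{ps-not-obvious}. What your write-up adds over the usual one-line dismissal is the projection step: you verify that the $\beta_i$-components occurring in generic points of $\Rep(Q,\alpha)$ sweep out a dense constructible, hence dense open, subset $U_i\subseteq\Rep(Q,\beta_i)$, which is precisely the genericity-of-summands point that informal treatments gloss over; combined with Krull--Remak--Schmidt counting ($\sum_j s_j=k$ with each $s_j\ge 1$, legitimate since each $\beta_j\neq 0$) this gives that every point of $U_i$ is indecomposable, which is exactly the paper's definition of a Schur root (so you correctly avoid needing the equivalent ``brick''/trivial-endomorphism characterization). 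The only cosmetic caveat is that your $U$ should be taken to be a dense open set on which the decomposition type is constant, which is exactly how the paper sets up the canonical decomposition, so nothing is missing.
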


We now state a theorem of Schofield that will be very useful for us.

\begin{theorem} [Schofield \cite{Scho-gen}] \label{Scho-can-dec}
Suppose $\alpha = \beta_1 \oplus \beta_2 \oplus \dots \oplus \beta_k$ is the canonical decomposition for $\alpha$. Then the canonical decompostion for $m \alpha$ is 
$$
m \alpha = [m\beta_1] \oplus [m\beta_2] \oplus \dots \oplus [m\beta_k],
$$
where $[m \beta] = \beta^{\oplus m}$ if $\beta$ is a real or isotropic Schur root and $[m\beta] = m\beta$ if $\beta$ is a non-isotropic imaginary Schur root.
\end{theorem}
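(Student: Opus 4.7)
The plan is to invoke the standard ext-characterization of canonical decomposition: a presentation $\gamma = \gamma_1 \oplus \cdots \oplus \gamma_\ell$ is a canonical decomposition if and only if each $\gamma_i$ is a Schur root and $\mathrm{ext}(\gamma_i,\gamma_j)=0$ for all $i\neq j$, where $\mathrm{ext}(\beta,\gamma):=\min_{V,W}\dim\mathrm{Ext}^1(V,W)$ as $V,W$ range over representations of dimensions $\beta,\gamma$. Granting this characterization (a theorem of Schofield), it suffices to check both conditions for the claimed decomposition $m\alpha = [m\beta_1]\oplus\cdots\oplus[m\beta_k]$, handling individual summands and cross pairs separately.

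First I would analyze each summand $[m\beta_i]$ and show it has the predicted structure. For a \emph{real} Schur root $\beta$ one has $\langle\beta,\beta\rangle=1$, and a generic representation $V$ of dimension $\beta$ satisfies $\mathrm{End}(V)=K$; the Euler formula $\langle\beta,\beta\rangle = \dim\mathrm{Hom}(V,V) - \dim\mathrm{Ext}^1(V,V)$ then forces $\mathrm{Ext}^1(V,V)=0$, so $\mathrm{ext}(\beta,\beta)=0$ and $\beta^{\oplus m}$ is a valid canonical decomposition of $m\beta$. For an \emph{isotropic} Schur root $\beta$ one uses that the indecomposables of dimension $\beta$ form generically a one-parameter family $\{V_\lambda\}$ with $\mathrm{Hom}(V_\lambda,V_\mu)=0$ for $\lambda\neq\mu$; the Euler form $\langle\beta,\beta\rangle=0$ then gives $\mathrm{Ext}^1(V_\lambda,V_\mu)=0$, hence $\mathrm{ext}(\beta,\beta)=0$, and again $\beta^{\oplus m}$ is canonical. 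For a \emph{non-isotropic imaginary} Schur root $\beta$, what must be shown is that $m\beta$ is itself a Schur root, which is Kac's theorem: the crux is that $\langle m\beta,m\beta\rangle = m^2\langle\beta,\beta\rangle$ is very negative, and one uses a deformation argument on $\Rep(Q,m\beta)$ to show that the generic representation remains indecomposable with trivial endomorphism ring.

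Second I would verify the cross conditions $\mathrm{ext}([m\beta_i],[m\beta_j])=0$ for $i\neq j$. The hypothesis that $\alpha=\beta_1\oplus\cdots\oplus\beta_k$ is a canonical decomposition gives $\mathrm{ext}(\beta_i,\beta_j)=0$, i.e., generic $V_i,V_j$ satisfy $\mathrm{Ext}^1(V_i,V_j)=0$. Since $\mathrm{Ext}^1$ is additive in both arguments, taking direct sums of such generic representations preserves the vanishing, which already handles every case in which both $[m\beta_i]$ and $[m\beta_j]$ are direct sums (real or isotropic summands). The remaining situation, where $[m\beta_i]=m\beta_i$ is non-isotropic imaginary, requires the scaling property $\mathrm{ext}(m\beta,\gamma)=m\cdot\mathrm{ext}(\beta,\gamma)$ for a Schur root $\beta$; this is an input from Schofield's general theory and implies $\mathrm{ext}(m\beta_i,[m\beta_j])=0$.

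The principal obstacle is the non-isotropic imaginary case of the first step, namely showing that $m\beta$ remains a Schur root whenever $\beta$ is. This step goes beyond formal manipulation of the Euler form and requires genuine geometric input about the variety $\Rep(Q,m\beta)$, essentially via the degeneration-style arguments used by Kac to analyze generic endomorphism algebras along imaginary roots. The scaling property of $\mathrm{ext}$ used in the cross-check is of comparable depth. Both of these inputs would have to be imported from the general theory of Schofield and Kac rather than derived on the fly, while the real and isotropic cases are essentially automatic from the Euler form identities.
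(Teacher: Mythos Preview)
The paper does not prove this theorem; it is stated with attribution to Schofield and cited to \cite{Scho-gen}, then used as a black box. There is therefore no proof in the paper to compare your proposal against.

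As for your sketch itself: the overall strategy via the ext-characterization of canonical decomposition is the right one and is how Schofield's argument runs. The real case is exactly as you say. In the isotropic case your reasoning is correct in spirit, but note that for a single generic $V$ of dimension $\beta$ one has $\dim\mathrm{Ext}^1(V,V)=1$ (from $\langle\beta,\beta\rangle=0$ and $\dim\mathrm{End}(V)=1$), so one genuinely needs two \emph{non-isomorphic} generic representations to witness $\mathrm{ext}(\beta,\beta)=0$; your appeal to the one-parameter family does this, but it relies on knowing that distinct generic members are bricks with no homomorphisms between them. The non-isotropic imaginary step---showing $m\beta$ is again Schur---is, as you correctly flag, the substantive input and is essentially Theorem~3.7 of \cite{Scho-gen}. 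Your ``scaling property'' $\mathrm{ext}(m\beta,\gamma)=m\cdot\mathrm{ext}(\beta,\gamma)$ is not quite right as stated (generic $\mathrm{hom}$ and $\mathrm{ext}$ are not linear in either argument); what you actually need and what Schofield proves is the weaker implication that $\mathrm{ext}(\beta_i,\beta_j)=0$ forces $\mathrm{ext}(m\beta_i,\beta_j)=0$ when $m\beta_i$ is Schur, and similarly in the second slot. So your outline captures the architecture of Schofield's proof, with the caveat that the two hard inputs you isolate are precisely the content of \cite{Scho-gen} rather than consequences of it.
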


\begin{corollary}
Suppose $\alpha = \beta_1^{\oplus m_1} \oplus \beta_2^{\oplus m_2} \oplus \dots \oplus \beta_k^{\oplus m_k}$ is the canonical decomposition of $\alpha$. For all $i$ such that $m_i > 1$, $\beta_i$ must be a real Schur root or an isotropic Schur root.
\end{corollary}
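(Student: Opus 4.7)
The plan is to argue by contradiction: assume some $\beta_j$ is a non-isotropic imaginary Schur root appearing with multiplicity $m_j \geq 2$ in the canonical decomposition of $\alpha$, and derive a contradiction by computing the canonical decomposition of $m_j \beta_j$ in two different ways. First, I would apply Schofield's theorem (Theorem~\ref{Scho-can-dec}) directly to $\beta_j$. By Lemma~\ref{obvious}, its canonical decomposition is just $\beta_j$ itself, so taking multiplier $m = m_j$ in Schofield's theorem yields the canonical decomposition of $m_j \beta_j$ as $[m_j \beta_j] = m_j \beta_j$, a single Schur-root summand (using that $\beta_j$ is non-isotropic imaginary).

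Second, I would compute the canonical decomposition of $m_j \beta_j$ by viewing it as a sub-sum inside the canonical decomposition of $\alpha$: the $m_j$ copies of $\beta_j$ in $\alpha = \bigoplus_i \beta_i^{\oplus m_i}$ form a sub-sum of total dimension $m_j \beta_j$. A standard fact about canonical decomposition (see, e.g., \cite{DW-book}) says that canonical decompositions are compatible with sub-sums: if $\alpha = \gamma_1 \oplus \cdots \oplus \gamma_l$ is the canonical decomposition of $\alpha$ and $S \subseteq \{1, \ldots, l\}$ is any subset, then the canonical decomposition of $\sum_{i \in S} \gamma_i$ is $\bigoplus_{i \in S} \gamma_i$. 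Heuristically, for a generic $V \in \Rep(Q, \alpha)$ decomposing as $V = \bigoplus_i V_i$ into indecomposables with $\underline{\dim}(V_i) = \gamma_i$, each $V_i$ ranges generically over $\Rep(Q, \gamma_i)$, and so the partial direct sum $\bigoplus_{i \in S} V_i$ is a generic representation in $\Rep(Q, \sum_{i \in S} \gamma_i)$ realizing the decomposition $\bigoplus_{i \in S} \gamma_i$. Applied to our setting, this gives the canonical decomposition of $m_j \beta_j$ as $\beta_j^{\oplus m_j}$.

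Comparing the two computations yields the identity $m_j \beta_j = \beta_j^{\oplus m_j}$ as canonical decompositions of the same dimension vector. By uniqueness of canonical decomposition, these must agree as unordered lists of Schur roots, so in particular the number of summands must match: $1 = m_j$. This contradicts $m_j \geq 2$, proving that any $\beta_j$ with $m_j > 1$ must be a real or isotropic Schur root.

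The main obstacle is to justify the sub-sum compatibility of canonical decomposition rigorously. Although this fact is standard in the theory of quiver representations, a careful proof requires either a genericity argument (showing that the summands in the indecomposable decomposition of a generic $V$ are themselves generic in their respective representation spaces) or an appeal to the equivalent characterization of canonical decomposition via vanishing of generic $\mathrm{Ext}^1$ between distinct summands. Once this tool is in hand, the rest of the argument is simply matching the two canonical decompositions of $m_j \beta_j$ and invoking uniqueness.
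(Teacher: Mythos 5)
Your argument is correct and follows the same route the paper intends: the corollary is left there as an immediate consequence of Theorem~\ref{Scho-can-dec}, and your contradiction between the two computations of the canonical decomposition of $m_j\beta_j$ (via Lemma~\ref{obvious} and Schofield's theorem on one hand, and the sub-sum of the canonical decomposition of $\alpha$ on the other, compared using uniqueness) is precisely how to make that implication rigorous. The sub-sum compatibility you flag is indeed the only point needing justification, and it follows from the standard characterization of the canonical decomposition as a sum of Schur roots with vanishing generic $\mathrm{Ext}^1$ between distinct summands (see \cite{DW-book}), exactly as you indicate.
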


We make a definition:

\begin{definition} \label{D.sigma.stable}
Let $\alpha$ be a dimension vector. Then, we call $\alpha$ a $\sigma$-stable (resp. $\sigma$-semistable, $\sigma$-polystable) if a generic representation of dimension $\alpha$ is $\sigma$-stable (resp. $\sigma$-semistable, $\sigma$-polystable).
\end{definition}

It is easy to see that in order for $\alpha$ to be $\sigma$-stable for any $\sigma$, it must be a Schur root. Schofield proved a result in the other direction, which will be very useful for us:

\begin{theorem} [Schofield \cite{Scho-gen}] \label{Scho-schur-stable}
Let $\alpha$ be a Schur root. Then there exists $0 \neq \sigma \in C(Q,\alpha)$ such that $\alpha$ is $\sigma$-stable. 
\end{theorem}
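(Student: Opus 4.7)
The plan is to reformulate the problem as an LP feasibility question on the hyperplane $\{\sigma : \sigma(\alpha) = 0\} \subset \R^{Q_0}$, apply Farkas duality, and reduce to a convex-combinatorial statement that is ruled out by the Schur root hypothesis. By Theorem~\ref{theo:King} together with Definition~\ref{crit-king}, $\alpha$ is $\sigma$-stable (as in Definition~\ref{D.sigma.stable}) if and only if there exists $\sigma \in \Z^{Q_0}$ satisfying $\sigma(\alpha) = 0$ and $\sigma(\beta) < 0$ for every proper generic sub-dimension vector $\beta$ of $\alpha$. Denote by $T(\alpha)$ the set of such $\beta$; this set is finite since each $\beta$ satisfies $0 < \beta < \alpha$ componentwise. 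Thus the task reduces to LP feasibility over $\R$ (or $\Q$), and clearing denominators will yield the required element of $\Z^{Q_0}$.

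By Farkas' lemma, rational feasibility is equivalent to the non-existence of a representation $\alpha = \sum_{\beta \in T(\alpha)} \lambda_\beta \beta$ with $\lambda_\beta \geq 0$, not all zero. Here is where the main work lies: I would rule out such a decomposition using the Schur root hypothesis, which asserts that a generic $V \in \Rep(Q,\alpha)$ is indecomposable with $\End_Q(V) = K$. The strategy is to convert a purported nonnegative rational relation among dimension vectors of generic proper subrepresentations into a nontrivial structural decomposition of $V$ itself, producing a non-scalar idempotent in $\End_Q(V)$ and contradicting Schurness. The natural tools are Schofield's characterization of generic sub-dimension vectors ($\beta \hookrightarrow \alpha \iff \mathrm{ext}(\beta, \alpha - \beta) = 0$, which makes $T(\alpha)$ computable recursively) together with induction on $\dim \alpha := \sum_x \alpha(x)$; the base cases correspond to simple roots, where the conclusion is immediate.

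The hard part is precisely this passage from a linear relation on dimension vectors to an actual direct-sum decomposition of the representation, because elements of $T(\alpha)$ need not correspond to summands of $V$ -- they are merely dimension vectors of subrepresentations, and distinct such subrepresentations generically intersect. I expect the technical heart of the argument to involve a case analysis based on the type of Schur root (real, isotropic, or non-isotropic imaginary), exploiting the constraint $\langle\alpha,\alpha\rangle \leq 1$ and the recursive structure of $\hookrightarrow$. Once a suitable $\sigma$ is produced, membership in $C(Q,\alpha)$ is automatic: $\alpha$ being $\sigma$-semistable implies (again by Theorem~\ref{theo:King}) that some positive multiple $m\sigma$ carries a nontrivial semi-invariant, so $\SI(Q,\alpha)_{m\sigma} \neq 0$ and $\sigma \in C(Q,\alpha)$ as required.
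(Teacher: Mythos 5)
Your reduction via Farkas/Motzkin is set up correctly: the system $\sigma(\alpha)=0$, $\sigma(\beta)<0$ for all $\beta$ in the (finite) set $T(\alpha)$ of proper nonzero generic sub-dimension vectors is infeasible over $\Q$ exactly when $\alpha$ lies in the nonnegative rational cone spanned by $T(\alpha)$, and your closing remark that generic $\sigma$-semistability forces $\SI(Q,\alpha)_{m\sigma}\neq 0$ for some $m>0$, hence $\sigma\in C(Q,\alpha)$, is fine. But the proposal is not a proof: the step you yourself flag as "where the main work lies" --- showing that a relation $\alpha=\sum_{\beta\in T(\alpha)}\lambda_\beta\beta$ with $\lambda_\beta\geq 0$ contradicts the Schur hypothesis --- is precisely the full content of Schofield's theorem after dualization, and it is only described as a hope. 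The mechanism you sketch (promoting such a rational relation among dimension vectors of generic subrepresentations to a direct-sum decomposition of a generic $V$, hence a non-scalar idempotent in $\End_Q(V)$) does not obviously exist: the subrepresentations realizing the various $\beta$'s sit inside the same $V$, generically intersect, and a weighted linear identity on dimension vectors produces no endomorphism of $V$; you acknowledge this obstruction but do not overcome it. Nothing in the remainder (induction on $\sum_x\alpha(x)$, case analysis on the type of root) is carried out either, so the argument has a genuine gap at its core.

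For comparison: the paper itself gives no proof and cites Schofield, and Schofield's argument (see also the treatment in \cite{DW-book}) does not pass through an abstract feasibility alternative. It exhibits the weight explicitly, namely the antisymmetrized Euler form $\sigma(-)=\langle\alpha,-\rangle-\langle-,\alpha\rangle$, which satisfies $\sigma(\alpha)=0$ automatically, and then proves $\sigma(\beta)<0$ for every proper nonzero $\beta\hookrightarrow\alpha$ by homological means: for a generic sub-dimension vector one has $\mathrm{ext}(\beta,\alpha-\beta)=0$, so $\langle\beta,\alpha-\beta\rangle=\hom(\beta,\alpha-\beta)\geq 0$, and the Schur condition $\End_Q(V)=K$ for generic $V$ is what forces the strict inequality $\langle\alpha,\beta\rangle<\langle\beta,\alpha\rangle$. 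If you want to salvage your LP route, you would still need essentially this homological input to rule out the Farkas alternative, at which point the explicit-weight argument is both shorter and constructive; as written, your proposal postpones exactly the part of the theorem that is hard.
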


\begin{corollary} \label{ps-not-obvious}
Let $\alpha$ be a dimension vector and $\sigma$ be a weight. Suppose $\alpha = \beta_1 \oplus \beta_2 \oplus \dots \oplus \beta_k$ is the canonical decomposition with $\beta_i$ being $\sigma$-stable for all $i$. Then, $\alpha$ is $\sigma$-polystable. Moreover, $\alpha$ is $\sigma$-stable if and only if $k = 1$.
\end{corollary}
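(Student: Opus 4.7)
The plan is to combine the canonical decomposition with the openness of $\sigma$-stability. Recall that ``$\alpha$ is $\sigma$-polystable'' (resp.\ $\sigma$-stable) means that there is a dense Zariski-open subset of $\Rep(Q,\alpha)$ on which every representation is $\sigma$-polystable (resp.\ $\sigma$-stable). So the task is to produce such an open subset for the polystable case and to see that one cannot exist for the stable case when $k\geq 2$.

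For the polystability claim, I would first invoke the canonical decomposition to get a non-empty Zariski open $U \subseteq \Rep(Q,\alpha)$ such that every $V \in U$ is isomorphic to a direct sum $V_1 \oplus \cdots \oplus V_k$ of indecomposables with $\underline{\dim}(V_i) = \beta_i$. Next, since each $\beta_i$ is a $\sigma$-stable dimension vector, there is a non-empty Zariski open $U_i^{st} \subseteq \Rep(Q,\beta_i)$ consisting of $\sigma$-stable representations. Now consider the morphism
\[
\Psi : \GL(\alpha) \times \prod_{i=1}^k \Rep(Q,\beta_i) \longrightarrow \Rep(Q,\alpha),\qquad (g, V_1,\ldots, V_k) \longmapsto g \cdot (V_1 \oplus \cdots \oplus V_k),
\]
where the direct sum is formed with respect to a fixed block decomposition of each $K^{\alpha(x)}$. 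The image of $\Psi$ is precisely the set of representations isomorphic to some $V_1 \oplus \cdots \oplus V_k$ with $\underline{\dim}(V_i) = \beta_i$, which contains $U$ and is therefore dense in $\Rep(Q,\alpha)$. By Chevalley's theorem this image is constructible, hence contains a non-empty Zariski open. Restricting $\Psi$ to the dense open $\GL(\alpha) \times \prod_i U_i^{st}$ does not change the closure of the image (since both the source and target are irreducible and the morphism is the same), so the image of this restriction is again dense and constructible, and therefore contains a non-empty Zariski open $W \subseteq \Rep(Q,\alpha)$. Every $V \in W$ is isomorphic to a direct sum of $\sigma$-stable representations, hence is $\sigma$-polystable. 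This shows $\alpha$ is $\sigma$-polystable.

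For the ``moreover'' part, if $k = 1$ then $\alpha = \beta_1$ is $\sigma$-stable by hypothesis. Conversely, suppose $k \geq 2$. I noted in the discussion after Definition~\ref{crit-king} that any $\sigma$-stable representation is necessarily indecomposable (splitting $V = V' \oplus V''$ forces one of $\sigma(\underline{\dim} V'), \sigma(\underline{\dim} V'')$ to be non-negative). But the canonical decomposition provides a dense open subset of $\Rep(Q,\alpha)$ on which every representation decomposes into $k \geq 2$ non-zero indecomposable summands, so the $\sigma$-stable locus cannot contain any dense open subset of $\Rep(Q,\alpha)$. Hence $\alpha$ is not $\sigma$-stable.

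The only mildly subtle point is the constructibility argument showing that ``generic $\sigma$-stability of each summand'' transfers through $\Psi$ to a generic property on $\Rep(Q,\alpha)$; everything else is a direct appeal to definitions and to results already stated in the excerpt (canonical decomposition, and the observation that $\sigma$-stable implies indecomposable).
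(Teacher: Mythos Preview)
Your proposal is correct and follows essentially the same route as the paper: both define the direct-sum-and-conjugate map $\Psi$ from $\GL(\alpha)\times\prod_i\Rep(Q,\beta_i)$ to $\Rep(Q,\alpha)$, use the canonical decomposition to see it is dominant, restrict to the product of the open $\sigma$-stable loci, and invoke Chevalley's theorem to conclude that the image contains a dense open set of $\sigma$-polystable representations. Your treatment of the ``moreover'' clause (via $\sigma$-stable $\Rightarrow$ indecomposable) just spells out what the paper declares obvious.
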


\begin{proof}
We have a map $\phi: \GL(\alpha) \times \prod_{i = 1}^k \Rep(Q,\beta_i) \rightarrow \Rep(Q,\alpha)$, that takes $(g, (V^{(i)})_{1 \leq i \leq k}) \mapsto g \cdot (V^{(1)} \oplus V^{(2)}\oplus \dots \oplus V^{(k)})$. The fact that $\alpha = \beta_1 \oplus \beta_2 \oplus \dots \oplus \beta_k$ is the canonical decomposition means that $\phi$ is dominant, i.e., its image ${\rm Im}(\phi)$ contains a (non-empty) Zariski open subset of $\Rep(Q,\alpha)$. 

Now, the fact that each $\beta_i$ is $\sigma$-stable means that there is a non-empty Zariski open subset $U_i \subseteq \Rep(Q,\beta_i)$ that consists of $\sigma$-stable representations.\footnote{It is also true that the subset of $\sigma$-stable representations is itself Zariski-open, see, e.g., \cite[Proposition~3.19]{Hoskins}, but here we only need that it contains a Zariski open subset.} Let $U = \GL(\alpha) \times \prod_{i=1}^k U_i$. Then for any representation $V \in \phi(U) \subseteq \Rep(Q,\alpha)$, it decomposes as a direct sum of representations of dimension $\beta_1, \dots, \beta_k$, each of which is $\sigma$-stable. Hence $\phi(U)$ consists of $\sigma$-polystable representations. Now, $\phi(U)$ is Zariski-dense in ${\rm Im}(\phi)$ which is Zariski-dense in $\Rep(Q,\alpha)$. Thus the Zariski-closure of $\phi(U)$ is $\Rep(Q,\alpha)$. Since $U$ is constructible, its image under the map $\phi$ is constructible (by Chevalley's theorem on constructible sets) and hence contains a (dense, hence non-empty) Zariski-open subset of its closure, i.e., there exists a Zariski-open subset of $\Rep(Q,\alpha)$ that is contained in $\phi(U)$. Thus $\alpha$ is $\sigma$-polystable.

That $\alpha$ is $\sigma$-stable if and only if $k = 1$ is obvious.
\end{proof}

\section{Matrix normal models} \label{sec:mnm}
Let us explicitly compute the canonical decomposition for the $m$-Kronecker quiver $\Theta(m)$.

\begin{proposition} [Canonical decomposition for the $m$-Kronecker quiver] \label{cd.kron}
Consider the $m$-Kronecker quiver $\Theta(m)$ and let $\alpha = (p,q)$ be a dimension vector, and let $d = {\rm gcd}(p,q)$.
\begin{enumerate}
\item If $p^2 + q^2 - mpq < 0$, then $\alpha$ is a (non-isotropic) imaginary Schur root and its canonical decomposition is $\alpha = \alpha$.
\item If $p^2 + q^2 - mpq = 0$, then $\frac{\alpha}{d}$ is an isotropic Schur root and the canonical decomposition is $\alpha = (\frac{\alpha}{d})^{\oplus d}$ (note that $\frac{\alpha}{d} \in \Z_{\geq 0}^{Q_0}$).
\item If $p^2 + q^2 - mpq = d^2$, then $\frac{\alpha}{d}$ is a real Schur root and the canonical decomposition is $\alpha = (\frac{\alpha}{d})^{\oplus d}$.
\item In all other cases (i.e., $p^2 + q^2 - mpq > 0$, but not equal to $d^2$), the canonical decomposition has at least two linearly independent dimension vectors.
\end{enumerate}

\end{proposition}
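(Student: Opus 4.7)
The strategy is to identify, in cases (1), (2), (3), the primitive vector $\alpha' := \alpha/d = (p/d, q/d)$ as a Schur root of the appropriate type, and then to apply Schofield's theorem (Theorem~\ref{Scho-can-dec}) to pass to the canonical decomposition of $\alpha = d\alpha'$. The Euler form from Section~\ref{sec:can.dec} specialises for $\Theta(m)$ to the Tits form $\langle \alpha, \alpha \rangle = p^2 + q^2 - mpq$; since it is quadratic, $\langle \alpha', \alpha' \rangle = \langle \alpha, \alpha \rangle / d^2$, and the three stated cases correspond exactly to $\langle \alpha', \alpha' \rangle$ being negative, zero, or equal to $1$, respectively.

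In case (3), $\alpha'$ has $\langle \alpha', \alpha' \rangle = 1$ and is therefore a real root; real roots of $\Theta(m)$ are automatically primitive (since $(d')^2 \mid 1$ forces $d' = 1$) and are Schur roots by the classical theory of reflection functors. In case (2), $\alpha'$ is primitive and isotropic; an elementary divisibility argument shows that $p'^2 + q'^2 = mp'q'$ with ${\rm gcd}(p',q') = 1$ forces $m = 2$ and $\alpha' = (1,1)$, and a generic $(1,1)$-dimensional representation of $\Theta(2)$ is a nonzero pair of scalars, visibly an isotropic Schur root. In case (1), $\alpha'$ is primitive with $\langle \alpha', \alpha' \rangle < 0$ (which forces $m \geq 3$); here one invokes the standard classification of Schur roots for the Kronecker quiver (cf.~\cite{DW-book}), by which every primitive positive imaginary root of a loop-free quiver is a Schur root, to conclude that $\alpha'$ is a non-isotropic imaginary Schur root. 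With $\alpha'$ established as a Schur root in each case, Theorem~\ref{Scho-can-dec} computes $[d\alpha']$: it equals $(\alpha')^{\oplus d}$ when $\alpha'$ is real or isotropic Schur (yielding cases (3) and (2)), and it equals the singleton $d\alpha' = \alpha$, itself a non-isotropic imaginary Schur root, in case (1).

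For case (4) the argument is by exclusion. Suppose for contradiction that every summand in the canonical decomposition of $\alpha$ is proportional to a single primitive vector; then, by the corollary following Theorem~\ref{Scho-can-dec}, the decomposition is either a singleton in which $\alpha$ itself is a Schur root, or of the form $\beta^{\oplus c}$ with $c \geq 2$ and $\beta$ a real or isotropic Schur root. In the singleton case $\alpha$ is a root, so $\langle \alpha, \alpha \rangle \in \{1\} \cup \{n \in \Z : n \leq 0\}$; but case (4) gives $\langle \alpha, \alpha \rangle > 0$ and $d^2 \mid \langle \alpha, \alpha \rangle$ with $\langle \alpha, \alpha \rangle \neq d^2$, hence $\langle \alpha, \alpha \rangle \geq 2d^2 > 1$, a contradiction. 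If instead $\beta$ is isotropic, then $\langle \alpha, \alpha \rangle = c^2 \cdot 0 = 0$, excluded; and if $\beta$ is a real Schur root, then the indivisibility of real roots of $\Theta(m)$ forces $c = d$ and hence $\langle \alpha, \alpha \rangle = d^2$, also excluded. Therefore the canonical decomposition of $\alpha$ must contain two linearly independent dimension vectors.

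The principal technical hurdle is the Schur-root assertion for $\alpha'$ in case (1), which requires invoking the Kac--Schofield root-system machinery for the wild Kronecker quiver more substantively than the other cases; this is nontrivial but available from the references, and once it is in place the remaining cases reduce to direct bookkeeping with the Tits form and Schofield's theorem.
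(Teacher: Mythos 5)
Your proposal is correct and follows essentially the same route as the paper: compute the Tits form $\langle\alpha,\alpha\rangle=p^2+q^2-mpq$, identify $\alpha'=\alpha/d$ as a Schur root of the appropriate type, feed this into Schofield's theorem (Theorem~\ref{Scho-can-dec}) to obtain the canonical decomposition of $\alpha=d\alpha'$, and settle case (4) by exclusion using that roots satisfy $\langle\beta,\beta\rangle\leq 1$ and that real and isotropic Schur roots are indivisible; the paper packages the last step as ``$\alpha$ is a rational multiple of a Schur root iff $\alpha'$ is Schur,'' but the content is the same.

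Two small points to tighten. In case (1), the blanket principle you invoke (``every primitive positive imaginary root of a loop-free quiver is a Schur root'') is broader than what you need and broader than what the standard references are usually quoted for; the fact actually used is the rank-two statement that every $(a,b)$ with $a^2+b^2-mab<0$ is a Schur root of $\Theta(m)$, which is precisely what the paper cites from Kac (all real and non-isotropic imaginary roots of the Kronecker quiver are Schur), so cite that instead. In case (4), the dichotomy ``singleton, or $\beta^{\oplus c}$ for a single real or isotropic Schur root $\beta$'' does not literally follow from the corollary to Theorem~\ref{Scho-can-dec}: a priori a decomposition with all summands proportional to $\alpha'$ could consist of several distinct multiples $c_1\alpha', c_2\alpha',\dots$, each occurring with multiplicity one. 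Under the case-(4) hypothesis this is easily excluded by the bookkeeping you already use: $\langle\alpha,\alpha\rangle>0$ forces $\langle\alpha',\alpha'\rangle>0$, and any summand $c\alpha'$ is a root, so $c^2\langle\alpha',\alpha'\rangle\leq 1$, giving $c=1$ and $\langle\alpha',\alpha'\rangle=1$; then the decomposition would be $(\alpha')^{\oplus d}$ and $\langle\alpha,\alpha\rangle=d^2$, contradicting the hypothesis. So state that argument explicitly rather than attributing the dichotomy to the corollary.
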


\begin{proof}

First, observe that for a dimension vector $\gamma = (a,b)$, we have $\left< \gamma,\gamma \right> = a^2 + b^2 - mab$. The set of roots for $\Theta(m)$ are precisely the dimension vectors $(a,b)$ such that $a^2 + b^2 - mab \leq 1$ \cite{Kac}. All real roots and non-isotropic imaginary roots are Schur \cite[Theorem~4]{Kac} (note that non-isotropic imaginary roots only occur for $m \geq 3$). To be precise, Kac shows that all real and non-isotropic imaginary root occurs in a canonical decomposition, and hence must be Schur. Isotropic roots only occur for $m = 2$, and these are precisely $(a,a)$. In this case, $(1,1)$ is Schur, but the rest are of course not Schur. Moreover, observe that any real root $(a,b)$ must be indivisible as otherwise, it would not be possible for $\left<(a,b),(a,b)\right>  = a^2 + b^2 - mab = 1$. Thus, all real and isotropic Schur roots are indivisible. Further, we can conclude that if $(a,b)$ is indivisible, then $(a,b)$ is a Schur root if and only if $a^2 + b^2 - mab \leq 1$.

Let us understand when the canonical decomposition of $(p,q)$ has at least two linearly independent dimension vectors and when it does not. If it does not have two linearly independent dimension vectors, then all the dimension vectors in the canonical decomposition must be parallel to $\alpha$, so $\alpha = \lambda_1 \alpha \oplus \lambda_2 \alpha \oplus \dots \oplus \lambda_k \alpha$ is the canonical decomposition for some scalars $\lambda_i$. This means that $\alpha$ is a scalar (not necessarily integral) multiple of a Schur root, i.e., $\lambda_1 \alpha$. So, let us now turn to understanding dimension vectors that are scalar multiples of Schur roots.


Let $p' = p/d$ and $q' = q/d$. We claim that $(p,q)$ is a  scalar (not necessarily integral) multiple of a Schur root if and only if $(p',q')$ is a Schur root. The ``if'' is obvious and we have to prove ``only if''. So, let us assume $(p,q)$ is a multiple of a Schur root. Suppose $p^2 + q^2 - mpq < 0$, then clearly both $(p,q)$ and $(p',q')$ are non-isotropic imaginary Schur roots. If $p^2 + q^2 - mpq \geq 0$, then $(p,q)$ must be a multiple of real or isotropic Schur root, and since real/isotropic Schur roots are indivisible, that Schur root must be $(p',q')$. Note that as a consequence of the above arguments, we get that $(p,q)$ is a scalar multiple of a Schur root if and only if it is an integral multiple of a Schur root.


Having proved the claim in the previous paragraph, we know that $(p,q)$ is a multiple of a Schur root if and only if $p'^2 + q'^2 - mp'q' \leq 1$ or equivalently $p^2 + q^2 - mpq = d^2$ or $\leq 0$. This is precisely the first three cases and in these cases, Theorem~\ref{Scho-can-dec} tells us precisely what the canonical decomposition has to be, depending on whether the Schur root $(p',q')$ is real, isotropic or non-isotropic imaginary. In all other cases, $(p,q)$ is not a multiple of a Schur root and as argued above its canonical decomposition will have two linearly independent dimension vectors.
\end{proof}

\subsection{Maximum Likelihood thresholds for complex matrix normal models}
Let us prove Theorem~\ref{theo-main-mnm} for the case of $K= \C$ first.

\begin{proof} [Proof of Theorem~\ref{theo-main-mnm} for $K = \C$]
For this proof, let $G_{\SL}$ denote $\SL_p \times \SL_q$. For $\sigma = (-q',p')$, we know that $\sigma$-stable/polystable/semistable is the same as $G_{\SL}$-stable/semistable/polystable by Lemma~\ref{Lemma-LR-sigma}.

For $(1)$, (resp. $(2)$), observe (by Proposition~\ref{cd.kron}) that $\alpha$ (resp. $\frac{\alpha}{d}$) are Schur roots and hence $\pi$-stable for some (indivisible) $\pi$ by Theorem~\ref{Scho-schur-stable}. Such a $\pi$ must satisfy $\pi(p,q) = 0$ and so $\pi$ must be $\sigma = (-q',p')$.\footnote{Naively, it could also have been $(q',-p')$, but this is not in $C(\Theta(m),(p,q))$ as remarked before.} Thus, from Corollary~\ref{ps-not-obvious}, we deduce that $\alpha$ is $\sigma$-stable (resp. $\sigma$-polystable) and further than in the case of $(2)$, $\alpha$ is $\sigma$-stable if and only if $d = 1$. Applying Theorem~\ref{theo:AKRS-LR}, we get the required conclusion.

 $(4)$ follows immediately by combining Corollary~\ref{lin.ind.unstable}, Proposition~\ref{cd.kron} and Theorem~\ref{theo:AKRS-LR}.
\end{proof}

\subsection{Maximum Likelihood thresholds for real matrix normal models}

\begin{lemma} \label{uni-MLE-indec}
Let $Y \in \Rep(\Theta(m),(p,q))_\R = \Mat_{p,q}^m(\R)$. If an MLE given $Y$ is unique for the real matrix normal model $\mathcal{M}(p,q)$, then $Y$ is indecomposable over $\R$. In other words, it is indecomposable when thought of as a representation over $\R$ or equivalently, an $\R Q$-module.
\end{lemma}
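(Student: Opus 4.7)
The strategy will be to prove the contrapositive: if $Y = Y_1 \oplus Y_2$ decomposes (as an $\R Q$-module) with both summands nonzero, then the MLE is either non-existent or non-unique. Writing $(p_i, q_i) = \underline{\dim}(Y_i)$, I would split into cases according to whether these two dimension vectors are parallel.

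When $(p_1, q_1)$ and $(p_2, q_2)$ are linearly independent, the complexification $Y_\C = (Y_1)_\C \oplus (Y_2)_\C$ is a decomposition over $\C$ with linearly independent dimension vectors, so by Corollary~\ref{lin.ind.unstable}, $Y_\C$ is $(\SL_p \times \SL_q)$-unstable. Proposition~\ref{prop:transfer} then transfers this to $\R$, and Theorem~\ref{theo:AKRS-LR} gives that the log-likelihood is unbounded above, ruling out any MLE.

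The substantive case is when the dimension vectors are parallel. Here one can find integers $(a, b) \neq (0, 0)$ with $a p_1 + b p_2 = a q_1 + b q_2 = 0$, and these necessarily satisfy $a \neq b$ (else $ap = 0$ would force $a = 0$). The real one-parameter subgroup
$$
\lambda(t) = \bigl(\operatorname{diag}(t^a I_{p_1}, t^b I_{p_2}),\ \operatorname{diag}(t^a I_{q_1}, t^b I_{q_2})\bigr)
$$
lies in $(\SL_p \times \SL_q)_\R$ and stabilizes $Y$ (the block form of $Y$ making the action trivial on each summand). Assuming an MLE $\widehat{\Psi}$ exists, write $\widehat{\Psi} = \rho(g_0)^\top \rho(g_0)$ for some $g_0 \in (\GL_p \times \GL_q)_\R$, where $\rho$ is the left-right representation on $\Mat_{p,q}$. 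Because $\lambda(t)$ stabilizes $Y$ and lies in the $\SL$-subgroup (so $|\det\rho(\lambda(t))| = 1$), direct substitution in the log-likelihood formula shows that $\rho(\lambda(t))^\top \widehat{\Psi} \rho(\lambda(t)) \in \mathcal{M}_\R(p,q)$ is again an MLE for every $t \in \R^\ast$.

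It then remains to exhibit a $t$ for which this new MLE genuinely differs from $\widehat{\Psi}$, and this is the main obstacle, handled by a positivity argument. Indeed, $\rho(\lambda(t))$ is a real diagonal matrix on $\Mat_{p,q}$ whose diagonal entry at $(I, J) \in [p] \times [q]$ equals $t^{\sigma(I) - \tau(J)}$, where $\sigma(I) \in \{a, b\}$ according to whether $I \leq p_1$, and similarly for $\tau$. If $\rho(\lambda(t))^\top \widehat{\Psi} \rho(\lambda(t)) = \widehat{\Psi}$ for all $t$, then the diagonal constraint $t^{2(\sigma(I) - \tau(J))} \widehat{\Psi}_{(I,J),(I,J)} = \widehat{\Psi}_{(I,J),(I,J)}$ combined with $\widehat{\Psi} \succ 0$ forces $\sigma(I) = \tau(J)$ for every $(I, J)$. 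But when $I \leq p_1$ and $J > q_1$ this difference equals $a - b \neq 0$, and such indices exist: parallel nonzero dimension vectors in a nontrivial model ($p, q \geq 1$) lie along a primitive vector with both entries positive, so $p_1, q_2 \geq 1$. The resulting contradiction yields a distinct MLE and completes the proof of the contrapositive.
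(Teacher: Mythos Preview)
Your argument is correct, and its overall shape matches the paper's: prove the contrapositive, dispose of the non-parallel case via Corollary~\ref{lin.ind.unstable}, and in the parallel case produce a nontrivial real one-parameter subgroup in the stabilizer of $Y$. The paper even builds essentially the same $\lambda(t)$, choosing scalars $c,d>0$ with $c^{p_1}d^{p_2}=1$ and $|c|,|d|\neq 1$.

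Where you diverge is in the punchline. The paper first moves $Y$ to a point of minimal norm in its $(G_{\SL})_\R$-orbit and then invokes the AKRS description of all MLEs at such a point (\cite[Proposition~5.2, Remark~5.5]{AKRS}): every $(g,h)$ in the stabilizer yields an MLE $\lambda\,(g^\top g\otimes (h^{-1})(h^{-1})^\top)$, so uniqueness forces $g,h$ to be orthogonal, which the constructed element visibly is not. You bypass this machinery entirely: you feed $\rho(\lambda(t))^\top\widehat{\Psi}\,\rho(\lambda(t))$ directly into the log-likelihood, use $\lambda(t)\in\SL_p\times\SL_q$ and $\lambda(t)\cdot Y=Y$ to see the value is unchanged, and then use positivity of the diagonal of $\widehat{\Psi}$ to force $\sigma(I)=\tau(J)$ for all $(I,J)$, a contradiction. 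This is more elementary and self-contained; the paper's route is shorter once the Kempf--Ness/AKRS description is on the table.

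One small point worth making explicit: your diagonal $\lambda(t)$ stabilises $Y$ only after $Y$ has been put into block form, i.e.\ after acting by some $(P,Q)\in\GL_p(\R)\times\GL_q(\R)$. This is harmless---uniqueness of the MLE is invariant under this action because $l_{(P,Q)\cdot Y}(\Psi)$ and $l_Y(\rho(P,Q)^\top\Psi\,\rho(P,Q))$ differ by a constant---but it deserves a sentence, as otherwise the positivity step is being applied to the congruent matrix $(\rho(P,Q)^{-1})^\top\widehat{\Psi}\,\rho(P,Q)^{-1}$ rather than to $\widehat{\Psi}$ itself. (Your introduction of $g_0$ with $\widehat{\Psi}=\rho(g_0)^\top\rho(g_0)$ is never used and can be dropped.)
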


\begin{proof}
Let $G = \SL_p \times \SL_q$, so that $G_\R = \SL_p(\R) \times \SL_q(\R)$. If an MLE given $Y$ exists, then $Y$ is polystable. Hence, without loss of generality, assume $Y$ is polystable. Moreover, without loss of generality, assume $Y$ is a point in the $G_\R$-orbit with minimal norm. Let $(G_\R)_Y$ denote the stabilizer at $Y$. 

There is a constant $\lambda \in \R\setminus \{0\}$ such that for each $(g,h) \in (G_\R)_Y$, $\lambda(g^\top g \otimes (h^{-1}) (h^{-1})^\top) \in \PD_{pq}$ is an MLE, see \cite[Proposition~5.2, Remark~5.5]{AKRS}.\footnote{There is a very minor change in the formula because the actions we use is slightly different (yet equivalent) from the one used in \cite{AKRS}. In the left-right multiplication, we multiply on the right with inverse, whereas in \cite{AKRS}, they multiply on the right with transpose. The two actions are related by an automorphism of $\SL_q$, given by $h \mapsto (h^{-1})^{\top}$, and we modify appropriately the formula for an MLE.} Clearly $\lambda I_{pq}$ is an MLE, so if it is unique, then for all $(g,h) \in (G_\R)_Y$, we must have $g^\top g = \alpha I_p$ and $h^\top h = \alpha I_q$ for some $0 \neq \alpha \in \R$. Since $g^\top g$ and $h^\top h$ are positive definite, we must have $\alpha > 0$, and since $\det(g) = \det(h) = 1$, we must have $\alpha = 1$. In other words, we must have $g^\top g = I_p$ and $h^\top h = I_q$, i.e., $g$ and $h$ are orthogonal matrices.

Suppose on the contrary that $Y$ is decomposable over $\R$. Interpreting this as a representation over $\R$ for $\Theta(m)$, we assign $\R^p$ to the vertex $x$ and $\R^q$ to the vertex $y$, and to each arrow $a_i$, we assign the linear map $Y_i: \R^q \rightarrow \R^p$. Now, $Y$ is decomposable means that there is a decomposition $\R^p = W(x) \oplus Z(x)$ and $\R^q = W(y) \oplus Z(y)$, such that for each $i$, $Y_i(W(y)) \subseteq W(x)$ and $Y_i(Z(y)) \subseteq Z(x)$. Consider $\dim(W) = (\dim(W(x)),\dim(W(y)))$ and $\dim(Z)  = (\dim(Z(x)),\dim(Z(y)))$. Then since $Y$ is $\sigma$-semistable, we must have that $\dim(W) = (ap,aq)$ and $\dim(Z) = (bp,bq)$ by Corollary~\ref{lin.ind.unstable}.\footnote{Rigorously speaking, we should say $Y$-polystable for $G_\R$ implies $Y$ is polystable for $G_\C$. A decomposition over $\R$ can be tensored (i.e., $\otimes_\R \C)$ to get a decomposition over $\C$ with (real) dimension vectors for the decomposition over $\R$ equaling the (complex) dimension vectors for the decomposition over $\C$. And over $\C$, we know that if $Y$ is polystable, then the dimension vectors of its summands must be linearly dependent with $(p,q)$ by Corollary~\ref{lin.ind.unstable}.} Now, let $c,d \in \R_{>0}$ such that $c^ad^b = 1$ and $|c|, |d| \neq 1$ (for e.g., $c = 2$ and $d = 2^{-a/b}$). Let $g \in \SL_p(\R)$ be the linear map that is defined by $g(v) = c v$ for $v \in W(x)$ and $g(v) = dv$ for $v \in Z(x)$, and let $h \in \SL_q(\R)$ be the linear map defined by $h(v) = cv$ for $v \in W(y)$ and $h(v) = dv$ for $v \in Z(y)$. Then, it is a simple check to see that $(g,h) \in (G_\R)_Y$. However, clearly $g$ and $h$ are not orthogonal matrices because they have eigenvalues with absolute value $\neq 1$. This contradicts uniqueness of MLE by the above discussion.


Thus, $Y$ must be indecomposable over $\R$.

\end{proof}

\begin{proof} [Proof of Theorem~\ref{theo-main-mnm} for $K = \R$]
Let $G_{\SL} = \SL_p(\C) \times \SL_q(\C)$ and so $(G_{\SL})_\R = \SL_p(\R) \times \SL_q(\R)$. By using Proposition~\ref{gen.stable.transfer}, all the generic stability notions (Definition~\ref{defn.gen.stable}) carry over without any change from the case of $K = \C$ to the case of $K = \R$. 

In particular, the statements regarding boundedness of log-likelihood function and existence of MLEs also carry over from $K = \C$ to $K = \R$. The only issue arises in terms of uniqueness of an  MLE. Over the reals, stability implies uniqueness of MLEs, but not conversely. Thus, even when $\Rep(\Theta(m),(p,q))_\R$ is not generically $(G_{\SL})_\R$-stable, we might still have almost sure uniqueness of MLEs. So, we need to look at the cases where we have generic polystability but not generic stability. This happens exactly when $d \geq 2$ and $p^2 + q^2 - mpq$ is either $0$ or $d^2$. This is precisely why we proved the above lemma.

Now, suppose $p^2 + q^2 - mpq = 0$ or $d^2$ and $d \geq 3$. Then, by Lemma~\ref{lem.3indec}, we get that a generic point in $\Rep(\Theta(m),(p,q))_\R$ is decomposable over $\R$, and by Lemma~\ref{uni-MLE-indec} that MLE is not unique.

Now, suppose $p^2 + q^2 - mpq = d^2$ and $d = 2$. This is precisely the case where $(p,q) = 2 \beta$ where $\beta$ is a real Schur root. This means that there is a unique indecomposable of dimension $\beta$ and is defined over $\R$ -- this is because for a real Schur root, the representation space has a Zariski-dense orbit corresponding to this unique indecomposable \cite[Lemma~11.1.3]{DW-book}. This unique indecomposable corresponds to an $\R Q$-module (for $Q = \Theta(m))$ that we will call $W$. Take a generic point $Y \in \Rep(\Theta(m),(p,q))_\R$. Interpret this as an $\R Q$-module, which we will call $M$. Then, by genericity, we know that $M_\C \cong W_\C \oplus W_\C$ as $\C Q$-modules.\footnote{For any complex vector space $V$ defined over $\R$, a generic point in $V_\R$ can be considered a generic point in $V$ because for any Zariski open subset $U$ of $V$, its real points $U_\R$ is a Zariski open subset of $V_\R$.} Hence $M_\C \cong W^{\oplus 4}$ as $\R Q$-modules. Thus $M^{\oplus 2} \cong W^{\oplus 4}$ as $\R Q$-modules. By Krull-Remak-Schmidt theorem, we get that $M \cong W^{\oplus 2}$ as $\R Q$-modules. In other words, $Y$ is decomposable over $\R$, and hence by Lemma~\ref{uni-MLE-indec}, there is not a unique MLE.

Now, suppose $p^2 + q^2 - mpq = 0$ and $d = 2$. This happens precisely in the case of $(p,q) = (2,2)$ and $m = 2$. This is a slightly tricky case, and it turns out that we cannot claim uniqueness or non-uniqueness of MLEs generically.\footnote{The subset of points having a unique MLE is semi-algebraic and full-dimensional, but not dense.} Nevertheless, it remains that we do not have almost sure uniqueness of MLEs, see \cite[Section~4]{Drton-etal} (in particular Corollary~4.6) for a more thorough explanation of this behavior. 

Thus every statement in the case of $K = \C$ transfers to the case of $K = \R$.
\end{proof}

\section{Model of proportional covariance matrices} \label{sec:diagonal}
In this section, we will focus on the model of proportional covariance matrices $\mathcal{N}(p,q)$. Once again, we will first work with $K = \C$ (and then transfer the result for $K = \R$). For this case, we consider the quiver $\mathcal{B}(q,m)$ with vertices $x,y_1,\dots,y_q$ and $m$ arrows from each $y_i$ to $x$. The quiver $\mathcal{B}(q,1)$ is pictured below.

\begin{center}
\begin{tikzpicture}

\path node (x1) at (0,1.5) []{$y_1$};
\path node (x2) at (0,1) []{$y_2$};
\path node (xp) at (0,-1.5) []{$y_q$};

\path node (y) at (-3,0) []{$x$};

\draw [->,thick] (x1) -- (-2.8,0.2);
\draw [->,thick] (x2) -- (-2.8,0);
\draw [->,thick] (xp) -- (-2.8,-0.2);

\path node at (-1.5,-0.4) [] {.};
\path node at (-1.5,0) [] {.};
\path node at (-1.5,-0.2) [] {.};

\path node at (0,-0.9) [] {.};
\path node at (0,0.3) [] {.};

\path node at (0,-0.6) [] {.};
\path node at (0,-0.3) [] {.};
\path node at (0,0) [] {.};

\end{tikzpicture}
\end{center}

Let us first define an operation for quivers. For any quiver $Q = (Q_0,Q_1)$, define $Q^{[m]}$ to be the following quiver: Let its vertex set be $Q_0$, the vertex set for $Q$. For each $a \in Q_1$, define $m$ arrows $a^{[1]},\dots,a^{[m]}$ in $Q^{[m]}_1$ such that $ta = ta^{[i]}$ and $ha = ha^{[i]}$ for all $i$. Then, for any dimension vector $\alpha \in \Z_{\geq 0}^{Q_0}$, we have $\Rep(Q^{[m]},\alpha) = \Rep(Q,\alpha)^{\oplus m}$. Further, the action of $\GL(\alpha)$ on $\Rep(Q^{[m]},\alpha) = \Rep(Q,\alpha)^{\oplus m}$ is the diagonal action obtained from the action on $\Rep(Q,\alpha)$. The same holds for the action of any subgroup of $\GL(\alpha)$.

We use the convention that in a dimension vector for $\mathcal{B}(q,m)$, the coordinates correspond to $x,y_1,\dots,y_q$ in order. If we take the dimension vector $\alpha = (p,1,1,\dots,1)$, then $\Rep(\mathcal{B}(q,1),\alpha)$ can be identified with $\Mat_{p,q}$. Let $\sigma = (- q',p',\dots,p')$ where $(p',q') = \frac{1}{{\rm gcd}(p,q)} (p,q)$. Analogous to Lemma~\ref{Lemma-LR-sigma}, we can prove that $\sigma$-semistability/polystability/stability coincides with $\SL_p \times \ST_q$-semistability/polystability/stability.


Now, observe that $\mathcal{B}(q,m) = \mathcal{B}(p,1)^{[m]}$. Thus, by the above discussion, we conclude the following:

\begin{proposition}
Consider the action of $H_{\SL} = \SL_p \times \ST_q$ on $\Mat_{p,q}^m = \Rep(\mathcal{B}(q,m),p, 1,1,\dots,1))$. Let $\sigma = (-q',p',\dots,p')$ be a weight for $\mathcal{B}(q,m)$, where $(p',q') = \displaystyle\frac{1}{{\rm gcd}(p,q)} (p,q)$. 
Suppose $Y \in \Mat_{p,q}^m$. Then, $Y$ is $H_{\SL}$-semistable/polystable/stable if and only if $Y$ is $\sigma$-semistable/polystable/stable.
\end{proposition}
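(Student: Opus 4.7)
The plan is to deduce this from King's theorem (Theorem~\ref{theo:King}) combined with an identification $\rho(H_{\SL}) = \rho(\GL(\alpha)_\sigma)$ analogous to Lemma~\ref{Lemma-LR-sigma}, here applied to $\mathcal{B}(q,m)$ with dimension vector $\alpha = (p, 1, 1, \dots, 1)$.

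First I would establish $\rho(H_{\SL}) = \rho(\GL(\alpha)_\sigma)$ inside $\GL(\Mat_{p,q}^m)$, where $\rho$ is the $\GL(\alpha)$-action on $\Rep(\mathcal{B}(q,m),\alpha)$. The inclusion $\subseteq$ is immediate. For the reverse inclusion, the diagonal one-parameter subgroup $\{(\mu I_p, \mu, \dots, \mu) : \mu \in \C^*\}$ lies in the kernel of $\rho$, so given $(g, t_1, \dots, t_q) \in \GL(\alpha)_\sigma$ it suffices to produce $\mu \in \C^*$ with $\mu^p \det(g) = 1$ and $\mu^q \prod_i t_i = 1$. These two equations are compatible precisely because of the $\GL(\alpha)_\sigma$ relation $\det(g)^{q'} = (\prod_i t_i)^{p'}$ (raise to the power $d = {\rm gcd}(p,q)$), and a suitable $\mu$ can be located by a Chinese-Remainder-type argument exploiting $\gcd(p', q') = 1$, exactly as in the proof of Lemma~\ref{Lemma-LR-sigma}. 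Since $\mathcal{B}(q,m) = \mathcal{B}(q,1)^{[m]}$ and $\GL(\alpha)$ acts diagonally on the $m$ summands, the image of any subgroup in $\GL(\Mat_{p,q}^m)$ is determined by its image on a single copy $\Mat_{p,q}$, so the argument is insensitive to $m$.

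Second, the equality $\rho(H_{\SL}) = \rho(\GL(\alpha)_\sigma)$ implies that the $H_{\SL}$- and $\GL(\alpha)_\sigma$-semistable/polystable/stable loci in $\Mat_{p,q}^m$ coincide. Applying King's theorem (Theorem~\ref{theo:King}) to $(\mathcal{B}(q,m), \alpha, \sigma)$, with $\alpha$ sincere and $\sigma$ nonzero and indivisible (since $\gcd(p', q') = 1$), identifies the $\GL(\alpha)_\sigma$-stability notions with the $\sigma$-stability notions, yielding the proposition.

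The main subtlety is verifying the hypothesis $\sigma \in C(\mathcal{B}(q,m), \alpha)$ needed to invoke King's theorem. I would handle this either by producing an explicit semi-invariant of weight a positive multiple of $\sigma$ via Schofield's determinantal construction (see \cite[Theorem~10.7.1]{DW-book}), or by observing that when $\sigma \notin C(\mathcal{B}(q,m), \alpha)$ there are no $\sigma$-semistable (respectively $\GL(\alpha)_\sigma$-semistable) points, so the equivalence holds vacuously on both sides and the proposition is unaffected.
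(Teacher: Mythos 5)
Your proposal is correct and takes essentially the paper's own route: the paper's proof is literally ``analogous to Lemma~\ref{Lemma-LR-sigma}'', i.e., prove $\rho(\SL_p\times\ST_q)=\rho(\GL(\alpha)_\sigma)$ by absorbing a scalar $\mu$ from the kernel of the action via the same Chinese-remainder argument, and then pass between $\GL(\alpha)_\sigma$-stability and $\sigma$-stability via Theorem~\ref{theo:King}. Your extra attention to the hypothesis $\sigma\in C(\mathcal{B}(q,m),\alpha)$ (with the vacuous fallback when it fails) is slightly more careful than what the paper writes, but it does not change the approach.
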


\begin{proof}
This is analogous to Lemma~\ref{Lemma-LR-sigma}.
\end{proof}

\begin{proposition} \label{stab.star}
Let $Q = \mathcal{B}(q,m)$, $\alpha = (p,1,1,\dots,1)$ and $\sigma = (-q',p',\dots,p')$ where $(p',q') = \frac{1}{{\rm gcd}(p,q)} (p,q)$. If $mq < p$, then every $Y \in \Rep(Q,\alpha) = \Mat_{p,q}^m$ is $\sigma$-unstable. If $mq = p$, then $\alpha$ is $\sigma$-polystable (and $\sigma$-stable precisely when $q = 1$). If $mq > p$, then $\alpha$ is $\sigma$-stable.
\end{proposition}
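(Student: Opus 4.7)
The plan is to apply King's criterion (Theorem~\ref{theo:King}) for $\sigma$-semistability, polystability, and stability. A subrepresentation of $Y = (Y_1, \dots, Y_m) \in \Rep(Q, \alpha)$ has dimension vector $\beta = (a, \epsilon_1, \dots, \epsilon_q)$ with $0 \leq a \leq p$ and $\epsilon_i \in \{0, 1\}$; writing $I = \{i : \epsilon_i = 1\}$ and $s = |I|$, it is realized by an $a$-dimensional subspace $W \subseteq \C^p$ containing $\{Y_k e_i : k \in [m],\, i \in I\}$. Hence the minimum feasible $a$ equals $\dim \mathrm{span}\{Y_k e_i : k \in [m],\, i \in I\}$, which is at most $ms$ and equals $\min(ms, p)$ for generic $Y$. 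A direct computation gives $\sigma(\beta) = -q'a + p's$, and since $p'/q' = p/q$, the inequality $\sigma(\beta) \leq 0$ is equivalent to $a \geq ps/q$.

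For $mq < p$, the strategy is to exhibit, for \emph{every} $Y$, a subrepresentation with $\sigma > 0$. Taking $I = [q]$ and $a = \dim \mathrm{span}\{Y_k e_i : k \in [m],\, i \in [q]\} \leq mq < p$, one obtains $\sigma(\beta) = -q'a + p'q > -q'p + p'q = 0$, so $Y$ is $\sigma$-unstable.

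For $mq > p$, the approach will be to verify $\sigma$-stability of a generic $Y$ directly via King's criterion. For a proper nonzero subrepresentation of type $(I, a)$ with $|I| = s$ and $a = \min(ms, p)$: when $ms \leq p$ and $s \geq 1$, one gets $\sigma(\beta) = s(p' - q'm) = s(p - mq)/d < 0$; when $ms > p$ and $s < q$, $a = p$ yields $\sigma(\beta) = -q'p + p's = p'(s - q) < 0$; when $s = q$, the $mq > p$ generic vectors span $\C^p$, forcing $a = p$ and reducing to the full representation; and $s = 0$ with $a \geq 1$ gives $\sigma = -q'a < 0$.

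The remaining case $mq = p$ is the main obstacle. Here $d = q$, $p' = m$, $q' = 1$, and for each $i$ the dimension vector $\beta_i$ defined by $\beta_i(x) = m$, $\beta_i(y_i) = 1$, and $\beta_i(y_j) = 0$ for $j \neq i$ satisfies $\sigma(\beta_i) = 0$ and $\sum_i \beta_i = \alpha$. The plan is to show (i) each $\beta_i$ is $\sigma$-stable, and (ii) $\alpha = \beta_1 \oplus \cdots \oplus \beta_q$ is the canonical decomposition; then Corollary~\ref{ps-not-obvious} yields $\sigma$-polystability of $\alpha$, with $\sigma$-stability precisely when there is only one summand, i.e., $q = 1$. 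For (i), a generic representation of dimension $\beta_i$ has the $m$ vectors at $y_i$ spanning $\C^m$ at $x$, so every proper nonzero subrepresentation has shape $(a, 0, \dots, 0)$ with $1 \leq a \leq m$, on which $\sigma = -a < 0$. For (ii), a generic $Y$ has the subspaces $V_i := \mathrm{span}(Y_1 e_i, \dots, Y_m e_i)$ of dimension $m$ and in direct sum, so that $\C^p = V_1 \oplus \cdots \oplus V_q$; this exhibits $Y$ as a direct sum of indecomposable summands of dimensions $\beta_1, \dots, \beta_q$, which (by genericity) identifies $\beta_1 \oplus \cdots \oplus \beta_q$ as the canonical decomposition of $\alpha$.
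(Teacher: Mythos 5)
Your proposal is correct and follows essentially the same route as the paper: King's criterion applied to the subrepresentation dimension vectors $(a,\epsilon_1,\dots,\epsilon_q)$, with the span-of-images subrepresentation giving instability for $mq<p$, the generic decomposition $\C^p=\oplus_i V_i$ into $\sigma$-stable summands of dimensions $(m,1,0,\dots,0),\dots,(m,0,\dots,1)$ for $mq=p$, and the case check over $s=|I|$ for $mq>p$. The only cosmetic difference is that you route the $mq=p$ polystability (and the "stable iff $q=1$" claim) through the canonical decomposition and Corollary~\ref{ps-not-obvious}, whereas the paper argues it directly from the explicit generic decomposition.
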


\begin{proof}
Let $mq < p$, and let $Y \in \Rep(Q,\alpha)$. We claim that there is a subrepresentation of dimension $\beta = (mq,1,1,\dots,1)$. This is because from each vertex $y_i$, there are $m$ arrows, and each one of them has a $1$-dimensional image. There are $mq$ of such $1$-dimensional subspaces (one for each arrow), so there is a subspace $U \subseteq \C^p$ of dimension $mq$ that contains all of these. This gives a subrepresentation of dimension $\beta$. Now $\sigma(\beta) > 0$, so $\alpha$ is not $\sigma$-semistable.

Now, let $mq = p$, and let $Y \in \Rep(Q,\alpha)$ be generic. Then, for each $y_i$, the images of the $m$ arrows starting from $y_i$ form an $m$-dimensional subspace of $\C^p$ (the vector space at the vertex $x$). You get one such $m$-dimensional subspace for each $y_i$ (call it $U_i$), hence there are $q$ of them. By genericity, we will have that $\C^q = \oplus_{i=1}^q U_i$. This means that placing $\C$ at the vertex $y_i$, $U_i$ at vertex $x$ and $\C^0$ at all other vertices gives a subrepresentation, and in fact a direct summand. Thus, $Y$ is a direct sum of $q$ indecomposables of dimensions $(m,1,0,\dots,0), (m,0,1,\dots,0),\dots, (m,0,\dots,1)$. It is straightforward to see (by genericity) that each one of these summands will be indecomposable, have no non-trivial subrepresentations, and are $\sigma$-stable. Thus, $Y$ is $\sigma$-polystable. In fact, it is easy to see that the canonical decomposition is $\alpha = (m,1,0,\dots,0) \oplus (m,0,1,\dots,0) \oplus \dots \oplus (m,0,\dots,1)$ and that each of the dimension vectors appearing in the canonical decomposition are real Schur roots that are $\sigma$-stable.

Now, let $mq > p$, and let $Y \in \Rep(Q,\alpha)$ be generic. Similar arguments as above will show that any subrepresentation has a dimension vector of the form $\beta = (\min \{m(\sum \epsilon_i), p\},\epsilon_1,\epsilon_2,\dots,\epsilon_q)$, where $\epsilon_i \in \{0,1\}$. For each subrepresentation, we observe that $\sigma(\beta) < 0$ unless $\beta = \alpha$, when $\sigma(\alpha) = 0$. Hence $Y$ is $\sigma$-stable.
\end{proof}

\begin{proof} [Proof of Theorem~\ref{theo:diag.model}]
We claim the following three statements. If $mq < p$, then log-likelihood function is unbounded. If $mq = p$, then (almost surely) an MLE exists and we have  almost sure uniqueness precisely when $m = 1$. If $mq > p$, then (almost surely) we have a unique MLE. For $K = \C$, they follow from the Proposition~\ref{stab.star} and Proposition~\ref{AKRS-N}. Transfering the result to $K = \R$ is analogous to Theorem~\ref{theo-main-mnm}. For the case $mq = p$, one has to look into the proof of Proposition~\ref{stab.star} to see that the canonical decomposition of $(p,q)$ consists of real Schur roots, so the argument parallels part $(2)$ of Theorem~\ref{theo-main-mnm}.

Reformulating this in terms of maximum likelihood thresholds gives us the required conclusion.
\end{proof}

\appendix
\section{Equivalence of stability notions} \label{App.stability}

In this appendix, we reconcile Theorem~\ref{theo:King} with King's original formulation \cite{King}. 

Let $Q$ be a quiver with no oriented cycles, let $\alpha$ be a sincere dimension vector, i.e., $\alpha(x) \neq 0$ for all $x \in Q_0$, and let $\sigma \in C(Q,\alpha)$ be a non-zero indivisible weight. Then, it is easy to see that $\C[\Rep(Q,\alpha)]^{\GL(\alpha)_\sigma} = \bigoplus_{n \in \Z} \SI(Q,\alpha)_{n\sigma}$. But, in fact, $\C[\Rep(Q,\alpha)]^{\GL(\alpha)_\sigma} =  \bigoplus_{n \in \Z_{\geq 0}} \SI(Q,\alpha)_{n\sigma}$ because we assume the quiver has no oriented cycles.\footnote{This follows from the fact that the form $\left<-,-\right>$ is non-degenerate (see \cite[Definition~2.5.3]{DW-book}) and that $\rm SI(Q,\alpha)_\gamma \neq 0$ implies that $\gamma = \left<\beta,-\right>$ for some dimension vector $\beta$ (see \cite[Theorem~10.7.1]{DW-book}). Now, since we chose $0 \neq \sigma \in \C(Q,\alpha)$, we know that for some $n \in \Z_{> 0}$, $n\sigma = \left<\beta,-\right>$ for some dimension vector $0 \neq \beta \in \Z_{\geq 0}^{Q_0}$. So for $m \in \Z_{>0}$, we get that $-m\sigma = \left<- \frac{m}{n} \beta, - \right>$, but $- \frac{m}{n}\beta$ cannot be a dimension vector as it contains negative entries, so $\SI(Q,\alpha)_{-m\sigma} = 0$ for all $m \in \Z_{>0}$.} Let $\C_\sigma$ denote the $1$-dimensional representation of $\GL(\alpha)$ corresponding to $\sigma$, i.e., $\C_\sigma = \C$ as a vector space and the linear action of $\GL(\alpha)$ is given by $g \cdot 1 = \sigma(g)$.

\begin{proposition}
Let $Q,\alpha,\sigma$ be as above. Then $V \in \Rep(Q,\alpha)$ is $\GL(\alpha)_\sigma$-semistable/polystable/stable if and only if $(V,1) \in \Rep(Q,\alpha) \oplus \C_\sigma$ is $\GL(\alpha)$-semistable/polystable/stable.
\end{proposition}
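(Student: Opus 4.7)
The plan is to transfer each stability notion for $V \in \Rep(Q,\alpha)$ under $\GL(\alpha)_\sigma$ to the same notion for $(V,1) \in \Rep(Q,\alpha) \oplus \C_\sigma$ under $\GL(\alpha)$, via the natural bijection between the two invariant theories.

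First I would handle semistability by identifying invariant rings. Writing $z$ for the coordinate on $\C_\sigma$, the group $\GL(\alpha)$ sends $z$ to $\sigma(g)^{-1} z$, so a bihomogeneous polynomial $f_n(V) z^n$ is $\GL(\alpha)$-invariant exactly when $f_n \in \SI(Q,\alpha)_{n\sigma}$. Hence
\[
\C[\Rep(Q,\alpha) \oplus \C_\sigma]^{\GL(\alpha)} \;=\; \bigoplus_{n \geq 0} \SI(Q,\alpha)_{n\sigma} \cdot z^n,
\]
and evaluation at $z = 1$ gives an isomorphism onto $\C[\Rep(Q,\alpha)]^{\GL(\alpha)_\sigma} = \bigoplus_{n \geq 0} \SI(Q,\alpha)_{n\sigma}$ (using the footnote identification). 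Since a positive-degree homogeneous $\GL(\alpha)$-invariant corresponds under this map to a positive-degree $\GL(\alpha)_\sigma$-invariant, and the evaluations $f_n(V) \cdot 1^n$ and $f_n(V)$ agree, Mumford's theorem yields the semistability equivalence.

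Polystability is the main obstacle. The forward direction is immediate: if the $\GL(\alpha)$-orbit $O$ of $(V,1)$ is closed, its intersection with the slice $\Rep(Q,\alpha) \times \{1\}$, which is exactly $\GL(\alpha)_\sigma \cdot V \times \{1\}$, is closed. For the converse, I would assume $V$ has closed $\GL(\alpha)_\sigma$-orbit and analyze an arbitrary limit point $(W,c) \in \overline{O}$ by cases. If $c \neq 0$, after translating by an $h \in \GL(\alpha)$ with $\sigma(h) = c$ I reduce to $(h^{-1}W, 1) \in \overline{O}$; given approximating $g_i \in \GL(\alpha)$ with $g_i V \to h^{-1}W$ and $\sigma(g_i) \to 1$, I would use that $\sigma : \GL(\alpha) \to \C^*$ is a smooth surjective morphism (indivisibility of $\sigma$ together with sincerity of $\alpha$ makes $\sigma$ primitive on the diagonal torus), hence admits an analytic local section $s$ near $1$, and replace $g_i$ by $s(\sigma(g_i))^{-1} g_i \in \GL(\alpha)_\sigma$ to conclude $h^{-1} W \in \overline{\GL(\alpha)_\sigma \cdot V} = \GL(\alpha)_\sigma \cdot V$, so $(W,c) \in O$. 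If $c = 0$, I would use that $V$ polystable implies $V$, and hence $(V,1)$, semistable by the first step; on the other hand, since $Q$ has no oriented cycles we have $\C[\Rep(Q,\alpha)]^{\GL(\alpha)} = \C$, so every $W$ lies in the $\GL(\alpha)$-null cone and a 1-PS contracting $W$ to $0$ sends $(W,0)$ to $0 \in \overline{O}$, contradicting the semistability of $(V,1)$. The two cases together give $\overline{O} = O$.

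For stability I would combine the polystability equivalence with a comparison of stabilizers and kernels. The stabilizer of $(V,1)$ in $\GL(\alpha)$ is $\GL(\alpha)_V \cap \GL(\alpha)_\sigma = (\GL(\alpha)_\sigma)_V$. Since $\sigma(\alpha) = 0$, the scalar subgroup (which is the kernel of $\GL(\alpha)$ on $\Rep(Q,\alpha)$ under sincerity) lies in $\GL(\alpha)_\sigma$ and acts trivially on $\C_\sigma$, so it is simultaneously the kernel of $\GL(\alpha)$ on $\Rep(Q,\alpha) \oplus \C_\sigma$ and of $\GL(\alpha)_\sigma$ on $\Rep(Q,\alpha)$. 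Hence stabilizer modulo kernel is the same group on both sides, and its finiteness transfers at once, completing the stability equivalence.
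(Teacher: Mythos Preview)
Your argument is correct. The semistability and stability parts match the paper's proof essentially verbatim: both identify $\C[\Rep(Q,\alpha)\oplus\C_\sigma]^{\GL(\alpha)}$ with $\bigoplus_{n\ge 0}\SI(Q,\alpha)_{n\sigma}$ via evaluation at $z=1$, and both observe that $\GL(\alpha)_{(V,1)}=(\GL(\alpha)_\sigma)_V$ with identical kernels.

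The polystability step is where you diverge. The paper argues both implications by contrapositive, invoking the \emph{generalized} Hilbert--Mumford criterion (Kempf's theorem): if an orbit is not closed, a one-parameter subgroup already drives the point to a limit outside the orbit. Thus if $V$ is not $\GL(\alpha)_\sigma$-polystable, a $1$-PS in $\GL(\alpha)_\sigma$ sends $(V,1)$ to some $(W,1)\notin O$; and if $(V,1)$ is not polystable, a $1$-PS in $\GL(\alpha)$ sends it to $(W,c)$, after which a case split on $c$ (with $\sigma(\lambda(t))=t^k$ forcing $c\in\{0,1\}$) finishes. Your route avoids Kempf entirely: the slice argument $O\cap(\Rep(Q,\alpha)\times\{1\})=\GL(\alpha)_\sigma\cdot V\times\{1\}$ handles one direction more cheaply, and for the other you work with arbitrary sequences and correct them into $\GL(\alpha)_\sigma$ via an analytic local section of $\sigma$. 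Both are valid; the paper's version is purely algebro-geometric but leans on a nontrivial theorem, while yours is more elementary but uses the Euclidean topology and the implicit function theorem. One small omission: your slice direction shows only that the $\GL(\alpha)_\sigma$-orbit of $V$ is closed, not that $V\neq 0$; you should note (as the paper does) that $(0,1)$ has non-closed $\GL(\alpha)$-orbit $\{0\}\times\C^*$, so this case is vacuous.
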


\begin{proof}
Let $z$ denote the coordinate of $\C_\sigma$ in $\Rep(Q,\alpha) \oplus \C_\sigma$. Let us split the argument for each notion of stability
\begin{itemize}
\item {\bf Semistability:} Suppose $V$ is $\GL(\alpha)_\sigma$-semistable. Then there exists $f \in \SI(Q,\alpha)_{n\sigma}$ such that $f(V) \neq 0$. This means that $\widetilde{f} = f z^n$ is $\GL(\alpha)$ invariant (with no constant term) and $\widetilde{f}(V,1) = f(V) \neq 0$. So $(V,1)$ is $\GL(\alpha)$-semistable.

Conversely, if $\widetilde{f}(V,1) \neq 0$ for some $\widetilde{f}$ that is $\GL(\alpha)$-invariant and homogeneous (say of degree $m > 0$), then write $\widetilde{f} = \sum_{i = 0}^m f_{m-i} z^{i}$, with $f_j$ homogeneous of degree $j$ for all $j$. Then, each $f_{m-i} z^{i}$ is $\GL(\alpha)$-invariant. For some $i$, we have that $f_{m-i} z^{i}$ does not vanish on $(V,1)$. So, $f_{m-i} \in \SI(Q,\alpha)_{i\sigma}$ is homogeneous of degree $(m-i)$ such that $f_{m-i}(V) \neq 0$. 
If $i = m$, this means that $f_0$ is a constant, but $f_0 \in \SI(Q,\alpha)_{m\sigma}$, which is absurd because $m\sigma \neq 0$. Thus $i < m$ and so $f_{m-i} \in \SI(Q,\alpha)_{i \sigma} \in \C[\Rep(Q,\alpha)]^{\GL(\alpha)_\sigma}$ is a homogeneous polynomial of positive degree that does not vanish on $V$. Thus, $V$ is $\GL(\alpha)_\sigma$-semistable. 

\item {\bf Polystability:} Suppose $0 \neq V$ is not $\GL(\alpha)_\sigma$-polystable. Then, by the (generalized) Hilbert--Mumford criterion (\cite[Proposition~9.6.2]{DW-book}) there exists a $1$-parameter subgroup $\lambda:\C^* \rightarrow \GL(\alpha)_\sigma$ such that $\lim_{t \to 0} \lambda(t) V = W$ where $W \notin \GL(\alpha)_\sigma \cdot V$. This means that $\lim_{t \to 0} \lambda(t) (V,1) = (W,1)$. Now, we will show that $(W,1) \notin \GL(\alpha) \cdot (V,1)$. Otherwise, we have $(W,1) = g (V,1) = (gV, \sigma(g))$ for some $g \in \GL(\alpha)$. Thus $\sigma(g) = 1$, i.e., $g \in \GL(\alpha)_\sigma$ and $gV = W$ and hence $W \in \GL(\alpha)_\sigma \cdot V$, which is a contradiction. So, $(V,1)$ is not polystable. In the case that $V = 0$, note that $(0,1)$ is not even $\GL(\alpha)$-semistable if there exists $g \in \GL(\alpha)
$ such that $|\sigma(g)| < 1$ (since that would mean $\lim_{k \to \infty} g^k (0,1) = (0,0)$). It is easy to construct such a $g \in \GL(\alpha)$ with our assumptions, i.e., $Q$ has no oriented cycles, $\alpha$ is sincere and $\sigma$ is non-zero.

Conversely, suppose $(V,1)$ is not polystable. Then there is a $1$-parameter subgroup $\lambda$ of $\GL(\alpha)$ such that $\lim_{t \to 0} \lambda(t) (V,1) = (W,c)$, with $(W,c) \notin \GL(\alpha) (V,1)$. Suppose $c = 0$, then $(W,0)$ is easily seen to be unstable because all points are unstable for the action of $\GL(\alpha)$ on $\Rep(Q,\alpha)$ if $Q$ has no oriented cycles (as is the case for us). This would mean that $(V,1)$ is not even $\GL(\alpha)$-semistable, which means that $V$ is not $\GL(\alpha)_\sigma$-semistable and hence not $\GL(\alpha)_\sigma$-polystable. Hence w.l.o.g., assume $c \neq 0$ from now on.
Now, the function $t \mapsto \sigma(\lambda(t))$ is a character of $\C^*$ and has to be of the form $t \mapsto t^k$ for some integer $k$. Since $c = \lim_{t \to 0} t^k$ is defined, we must have $k \geq 0$. If $k = 0$, we get $c = 1$ and if $k > 0$, we get $c = 0$. Since $c \neq 0$, we must have $c = 1$ and $\lambda(t) \in \GL(\alpha)_\sigma$. This means that $\lim_{t \to 0} \lambda(t) V = W$, so $W \in \overline{\GL(\alpha)_\sigma \cdot V}$. But $W \notin \GL(\alpha)_\sigma \cdot V$, because if it were, then $g V = W$ for some $g \in \GL(\alpha)_\sigma$, which means $g(V,1) = (W,1) = (W,c)$, which is a contradiction. Thus the orbit of $V$ is not closed, and hence $V$ is not polystable.
 
\item {\bf Stability:} Since $V$ is $\GL(\alpha)_\sigma$-polystable if and only if $(V,1)$ is $\GL(\alpha)$-polystable, we only need to now understand the stabilizers. First, observe that if $\Delta \subseteq \GL(\alpha)_\sigma$ is the kernel for its action on $\Rep(Q,\alpha)$, then $\Delta$ is the kernel for the action of $\GL(\alpha)$ on $\Rep(Q,\alpha) \oplus \C_\sigma$. Thus, all we need to do is to show that the two stabilizers, i.e., $\GL(\alpha)_{(V,1)}$ and $(\GL(\alpha)_\sigma)_V$, have the same dimension. In fact they are both the same. Indeed $g \in \GL(\alpha)_{(V,1)}$ if and only if $g(V,1) = (gV, \sigma(g)) =  (V,1)$ if and only if $\sigma(g) = 1$ and $gV = V$ if and only if $g \in (\GL(\alpha)_\sigma)_V$.

\end{itemize}
\end{proof}

King \cite{King} showed that $\sigma$-semistability/polystability/stability for $V \in \Rep(Q,\alpha)$ was the same as the $\GL(\alpha)$-semistability/polystability/stability of $(V,1) \in \Rep(Q,\alpha) \oplus \C_\sigma$, which we have shown is equivalent to $\GL(\alpha)_\sigma$-semistability/polystability/stability of $V$ (under the hypothesis mentioned above). Thus, the above proposition bridges the gap between the results stated in \cite{King} and Theorem~\ref{theo:King}. Finally, we remark that the hypothesis on $Q,\alpha$ and $\sigma$ cannot be entirely removed. For example, if you take $\sigma = 0$, then for $V = 0$, it is easy to see that $(V,1)$ is $\GL(\alpha)$-semistable, but $V$ is not $\GL(\alpha)_\sigma$-semistable.


\end{document}